\documentclass{amsart}
\usepackage{amsmath}%
\usepackage{amsthm}
\usepackage{amsfonts}%
\usepackage{amssymb}%
\usepackage{graphicx}
\usepackage{tikz-cd}
\usetikzlibrary{cd}

\usepackage{mathrsfs}

\DeclareFontFamily{OT1}{pzc}{}
\DeclareFontShape{OT1}{pzc}{m}{it}{<-> s * [1.10] pzcmi7t}{}
\DeclareMathAlphabet{\mathpzc}{OT1}{pzc}{m}{it}

\newtheorem{theorem}{Theorem}[section]

\newtheorem{corollary}[theorem]{Corollary}

\newtheorem{definition}[theorem]{Definition}
\newtheorem{example}[theorem]{Example}

\newtheorem{lemma}[theorem]{Lemma}

\newtheorem{proposition}[theorem]{Proposition}
\newtheorem{remark}[theorem]{Remark}

\numberwithin{equation}{section}
\def\Xint#1{\mathchoice
{\XXint\displaystyle\textstyle{#1}}%
{\XXint\textstyle\scriptstyle{#1}}%
{\XXint\scriptstyle\scriptscriptstyle{#1}}%
{\XXint\scriptscriptstyle\scriptscriptstyle{#1}}%
\!\int}
\def\XXint#1#2#3{{\setbox0=\hbox{$#1{#2#3}{\int}$}
\vcenter{\hbox{$#2#3$}}\kern-.5\wd0}}

\def\dashint{\Xint-}

\newcommand{\R}{\mathbb{R}}
\newcommand{\Q}{\mathbb{Q}}
\newcommand{\N}{\mathbb{N}}

\newcommand{\D}{\mathbb{D}}

\newcommand{\M}{\mathcal{M}}

\newcommand{\Ha}{\mathcal{H}}
\newcommand{\leb}{\mathcal{L}}

\newcommand{\Ne}[2]{N^{1,#1}(#2)}

\newcommand{\Nel}[2]{N^{1,#1}_{loc}(#2)}
\newcommand{\Nem}[3]{N^{1,#1}(#2;#3)}
\newcommand{\Neml}[3]{N^{1,#1}_{loc}(#2;#3)}

\newcommand{\essinf}{\operatorname{essinf}}

\newcommand{\Mod}{\operatorname{Mod}}
\newcommand{\dist}{\operatorname{dist}}
\newcommand{\diam}{\operatorname{diam}}

\newcommand{\ud}{\mathrm {d}}
\newcommand{\id}{\mathrm {id}}

\newcommand{\inv}{^{-1}}

\newcommand{\md}{\operatorname{md}}
\newcommand{\apmd}{\operatorname{ap md}}

\newcommand{\loc}{\mathrm{loc}}

\title{Maximal metric surfaces and the Sobolev-to-Lipschitz property}

\thanks{The first author was partially supported by the DFG grant SPP 2026. The second author was partially supported by the Vilho, Yrj\"o ja Kalle V\"aisal\"a Foundation (postdoc pool) and by the Swiss National Science Foundation Grant 182423}
\keywords{Thick quasiconvexity, Sobolev-to-Lipschitz property, volume rigidity, Plateau problem in metric spaces, Sobolev spaces}
\subjclass[2010]{30L10,	49Q05, 53B40}
\author{Paul Creutz}
\address{University of Cologne, Weyertal 86-90, 50931 K{\"o}ln, Germany}
\email{pcreutz@math.uni-koeln.de}
\author{Elefterios Soultanis}
\address{University of Fribourg, Chemin du Musee 23, CH-1700, Fribourg, Switzerland}
\email{elefterios.soultanis@gmail.com}
\date{\today}

\begin{document}

\begin{abstract}
We find maximal representatives within equivalence classes of metric spheres. For Ahlfors regular spheres these are uniquely characterized by satisfying the seemingly unrelated notions of Sobolev-to-Lipschitz property, or volume rigidity. We also apply our construction to solutions of the Plateau problem in metric spaces and obtain a variant of the associated intrinsic disc studied by Lytchak--Wenger, which  satisfies a related maximality condition.
\end{abstract}

\maketitle
\section{Introduction}
\subsection{Main result}
A celebrated result due to Bonk and Kleiner~\cite{bon02} states that an Ahlfors $2$-regular metric sphere~$Z$ is quasisymetrically equivalent to the standard sphere if and only if it is linearly locally connected. Recently, it was shown in~\cite{lyt17} that the quasisymmetric homeomorphism $u_Z:S^2\rightarrow Z$ may be chosen to be of minimal energy~$E^2_+(u_Z)$, and in this case is unique up to conformal diffeomorphism of $S^2$.\par
The map $u_Z$ gives rise to a measurable Finsler structure on $S^2$, defined by the approximate metric differential $\apmd u_Z$; cf. \cite{lyt18} and Section \ref{sec:discs} below. When $Z$ is a smooth Finsler surface, the approximate metric differential carries all the metric information of $Z$. In the present generality, however, $\apmd u_Z$ is defined only almost everywhere and thus does not determine the length of every curve. 
\begin{definition}\label{def:equivalence}
	Let $Y$ and $Z$ be Ahlfors $2$-regular, linearly locally connected metric spheres. We say that $Y$ and $Z$ are \emph{analytically equivalent} if there exist energy minimizing parametrizations $u_Y$ and $u_Z$ such that
	\begin{equation}\label{eq:equivalence}
	\apmd u_Y=\apmd u_Z
	\end{equation}
	almost everywhere.
\end{definition}

By the aformentioned uniqueness result, analytic equivalence defines an equivalence relation on the class of linearly locally connected, Ahlfors $2$-regular spheres. The main result of this paper states that the equivalence class of such a sphere contains a maximal representative, unique up to isometry. 

\begin{theorem}\label{thm:discmain}
Let $Z$ be a linearly locally connected, Ahlfors $2$-regular sphere. Then there is linearly locally connected, Ahlfors $2$-regular sphere~$\widehat{Z}$ which is analytically and bi-Lipschitz equivalent to $Z$ and satisfies the following properties.
\begin{itemize}
\item[(1)]\textbf{Sobolev-to-Lipschitz property.} If $f\in \Ne 2{\widehat Z}$ has weak upper gradient~$1$, then $f$ has a $1$-Lipschitz representative.
\item[(2)]\textbf{Thick geodecity.} For arbitrary measurable subsets $E,F\subset \widehat{Z}$ of positive measure and $C>1$, one has $\Mod_2\Gamma(E,F;C)>0$.
\item[(3)]\textbf{Maximality.} If $Y$ is analytically equivalent to $\widehat{Z}$, there exists a $1$-Lipschitz homeomorphism $f:\widehat{Z}\rightarrow Y$.
\item[(4)]\textbf{Volume rigidity.} If $Y$ is a linearly locally connected, Ahlfors $2$-regular sphere, and $f:Y \rightarrow \widehat Z$ is a $1$-Lipschitz area preserving map which is moreover cell-like, then $f$ is an isometry.
\end{itemize}
Moreover~$\widehat{Z}$ is characterized uniquely, up to isometry, by any of the listed properties.
\end{theorem}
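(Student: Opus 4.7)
The plan is to construct $\widehat{Z}$ concretely from the Finsler data $\apmd u_Z$, show it has the correct metric/topological type, and then verify all four properties by relating them back to the construction.

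\textbf{Construction.} First I would build $\widehat{Z}$ as follows. The approximate metric differential $\apmd u_Z$ is a measurable field of seminorms on $TS^2$ that is comparable to the Euclidean norm up to the (essentially bounded) metric distortion coming from the Lytchak--Wenger theory. Using this Finsler structure, I would define for points $p,q\in S^2$ a pseudometric
\[
 \widehat d(p,q)=\inf_\gamma \int_\gamma \apmd u_Z,
\]
where the infimum runs over a suitable class of curves (rectifiable curves outside an exceptional family of zero $2$-modulus) and the integral is interpreted via the approximate metric differential. Taking the metric identification of $(S^2,\widehat d)$ produces a metric space $\widehat Z$ together with a natural surjection induced by $u_Z$. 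Bi-Lipschitz equivalence with $Z$ is built in since $\apmd u_Z$ is comparable to the Euclidean metric and dominates $d_Z\circ u_Z$; analytic equivalence holds because $\widehat Z$ admits an energy minimizing parametrization with the same approximate metric differential. That $\widehat Z$ remains an Ahlfors $2$-regular, linearly locally connected sphere follows from the bi-Lipschitz control and the invariance of these properties under bi-Lipschitz homeomorphism (which I would cite or verify through the area formula for $u_Z$).

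\textbf{Maximality and thick geodecity.} Maximality (3) is essentially tautological from the construction. If $Y$ is analytically equivalent to $\widehat Z$ via parametrizations $u_Y$ and $u_{\widehat Z}$, then $\apmd u_Y=\apmd u_{\widehat Z}$ almost everywhere, so for any admissible curve $\gamma$ in $S^2$ the $Y$-length of $u_Y\circ\gamma$ agrees with $\int_\gamma\apmd u_Y=\int_\gamma \apmd u_{\widehat Z}$, which bounds $\widehat d$ from above. Hence $u_Y\circ u_{\widehat Z}^{-1}$ is $1$-Lipschitz. Thick geodecity (2) comes from the length-metric character of $\widehat d$: for measurable $E,F$ of positive measure and $C>1$, a Fubini-type argument on the admissible curve family (using that $\apmd u_Z$-length is controlled by $C$ times Euclidean length for most line segments between small neighborhoods) yields curves in $\Gamma(E,F;C)$ forming a family of positive $2$-modulus. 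From here, (1) follows from (2) by the standard upper-gradient integration argument: given $f\in N^{1,2}(\widehat Z)$ with weak upper gradient $1$, the set of curves $\gamma$ with $|f(\gamma(a))-f(\gamma(b))|\le \ell(\gamma)$ has full modulus, and thick geodecity lets us apply this to near-geodesic curves between any fixed pair of points.

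\textbf{Volume rigidity and uniqueness.} For (4), a $1$-Lipschitz, cell-like, area-preserving $f\colon Y\to \widehat Z$ pulls back the Finsler structure of $\widehat Z$ to one dominated by that of $Y$; area preservation forces equality almost everywhere, and combined with thick geodecity on $\widehat Z$ the map must preserve lengths of a modulus-full family of curves, giving an isometry by density. For uniqueness, the clean observation is that (3) characterizes $\widehat Z$ up to isometry in its analytic equivalence class: two maximal representatives admit $1$-Lipschitz bijections in both directions, which compose to a $1$-Lipschitz self-map factoring through analytic equivalence, hence an isometry. I would then show each of (1), (2), (4) implies (3) within the analytic equivalence class. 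The implication (2)$\Rightarrow$(3) follows because thick geodecity together with analytic equivalence lets the $1$-Lipschitz map from $\widehat Z$ be reversed via an upper-gradient estimate; (1)$\Rightarrow$(2) uses a converse going through weak upper gradients; and (4)$\Rightarrow$(3) reduces to volume-rigidity applied to the $1$-Lipschitz map supplied by maximality.

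\textbf{Main obstacle.} The crux is setting up the construction so that $\widehat Z$ is simultaneously a bona fide metric sphere, Ahlfors $2$-regular, analytically equivalent to $Z$, and equipped with enough curves to sustain thick geodecity. Defining $\widehat d$ via curve integrals of a merely measurable Finsler tensor requires careful exceptional-set bookkeeping (to avoid collapsing distances on null sets of bad curves), and proving that $\widehat Z$ remains a topological sphere after the metric identification is the delicate topological step — it is here that linear local connectedness and the cell-like nature of the quotient map must be invoked. Once $\widehat Z$ is in hand and thick geodecity is established, the four properties and their mutual equivalence fall out of standard upper-gradient/modulus machinery applied to the Finsler data.
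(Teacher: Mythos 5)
Your overall blueprint — build $\widehat Z$ as the metric space determined (up to $2$-modulus-null exceptional sets) by the Finsler data of the energy-minimizing parametrization, then verify the four properties — matches the paper's strategy. But several of the verification steps have genuine gaps.

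The most serious is the thick geodecity argument. You propose a ``Fubini-type argument\ldots using that $\apmd u_Z$-length is controlled by $C$ times Euclidean length for most line segments.'' This is false as stated: the ratio of the Finsler seminorm $\apmd u_Z$ to the Euclidean seminorm is bounded by the bi-Lipschitz constant but has no reason to be close to $1$, so line segments between $E$ and $F$ generically will \emph{not} lie in $\Gamma(E,F;C)$ for $C$ close to $1$. The paper's mechanism is entirely different: thick geodecity of $\widehat Z$ is a consequence of the \emph{definitional tightness} of the essential infimum. The notion of $(u,p)$-regular curve and the fact that $p$-a.e.\ curve is regular ensure that, for any $\varepsilon>0$, a positive-modulus family of curves $\gamma$ nearly realizes $ess\ell_{u,p}$, and for such $\gamma$ the $\widehat d$-length is itself within $\varepsilon$ of $\widehat d(\gamma(0),\gamma(1))$. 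No comparison with Euclidean segments enters. Relatedly, your construction ``$\widehat d(p,q)=\inf_\gamma\int_\gamma\apmd u_Z$ over rectifiable curves outside an exceptional family of zero $2$-modulus'' is not well defined: fixing one null family changes nothing (you would simply get the ordinary length metric), and you cannot choose the family after the fact. What the paper actually takes is a supremum over all null families, and — crucially — the essential infimum is taken over curve families $\Gamma(\bar B(p,\delta),\bar B(q,\delta))$ with $\delta\to 0$, because the family of curves joining two fixed points has zero $2$-modulus, so the essential infimum over it is vacuously $+\infty$.

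Two further points are glossed over. In volume rigidity, you jump from ``area preservation forces equality'' to ``isometry by density,'' omitting the step where one shows $f^{-1}$ is quasiconformal: the paper achieves this by showing $f\circ v$ is an area minimizer in $\Lambda(\widehat Z)$, hence infinitesimally quasiconformal, hence quasisymmetric, and only then invokes the Sobolev-to-Lipschitz property of $\widehat Z$ to make $f^{-1}$ Lipschitz. And ``maximality is essentially tautological'' overstates the case: the proof requires identifying the pull-back construction from any analytically equivalent $Y$ (via its own energy-minimizing parametrization $v$) with $\widehat Z$ itself, i.e.\ showing $Y_v=\widehat Z_u$ from $\apmd v=\apmd u$ a.e.; this uses the equivalence between equality of approximate metric differentials and equality of $\ell(u\circ\gamma)$ for $2$-a.e.\ curve — a substantive lemma, not a formal consequence of your definition.
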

In the setting of Ahlfors regular metric spheres, Theorem \ref{thm:discmain} links the essential metric investigated in~\cite{dec90,dec91, alb18}, the concept of volume rigidity studied under curvature bounds in~\cite{li12,li14,li15}, the Sobolev-to-Lipschitz property arising in the context of RCD spaces in~\cite{gig13,gig14}, and the notion of thick quasiconvexity whose connection to Poincar\'e inequalities is investigated in~\cite{dur12,dur16,dur12'}.\par 
More generally, an Ahlfors $2$-regular disc $Z$ with finite boundary length admits a quasisymmetric parametrization by the standard disc $\overline{\D}$ if and only if it is linearly locally connected. The parametrization may again be chosen to be of minimal Reshetynak energy and is then unique up to conformal diffeomorphism, see~\cite{lyt17}. A suitable variant of Theorem~\ref{thm:discmain} also holds in this setting. In this case in the formulation of volume rigidity one has to assume that $f$ preserves the boundary curves and the restriction $f:\partial Y\rightarrow \partial Z$ is monotone, instead of assuming that the entire map~$f$ is cell-like.\par 
Other parametrization results for metric surfaces have been obtained for example in~\cite{raj17,wil08,mer13,iko19,raj19}. Since these results do not include a uniqueness part however it seems unclear how to define a suitable notion of analytic equivalence in the settings therein.

\subsection{Construction of essential metrics}

Our construction of essential metrics relies on the \emph{$p$-essential infimum} over curve families and extends the construction of the essential metric in \cite{alb18} to the case $p<\infty$; cf. \eqref{eq:ess} and Definition \ref{def:essinf}. %
For a metric measure space $X$, $p\in[1,\infty]$, and a family of curves $\Gamma$ in $X$, we define its $p$-essential length $ess \ell_p(\Gamma)\in [0,\infty]$ as the essential infimum of the length function on $\Gamma$ with respect to $p$-modulus. The \emph{$p$-essential distance} $d_{p}':X\times X \rightarrow [0,\infty]$ is defined by
\begin{equation}
d_{p}'(x,y):= \lim_{\delta \rightarrow 0} ess \ell_p \Gamma(B(x,\delta),B(y,\delta)),\quad x,y\in X,
\end{equation}
where $\Gamma(E,F)$ denotes the family of curves in $X$ joining two given measurable subsets $E,F\subset X$.
This quantity does not automatically satisfy the triangle inequality, and we consider instead the largest metric $d_p\le d_p'$; see the discussion after Definition~\ref{def:pullback}. In general, $d_p$ might take infinite values, and its finiteness is related to an abundance of curves of uniformly bounded length connecting given disjoint sets. The condition of \emph{thick quasiconvexity}, introduced in \cite{dur12}, quantifies the existence of an abundance of quasiconvex curves.

\begin{definition}\label{def:modgeod}
	Let $(X,d,\mu)$ be a metric measure space and $p \in [1,\infty]$. We say that $X$ is \textbf{$p$-thick quasiconvex} with constant $C\geq 1$ if, for all measurable subsets $E,F\subset X$ of positive measure, we have 
	\begin{equation*}
	\Mod_p\Gamma(E,F;C)>0.
	\end{equation*}
	We say that $X$ is \textbf{$p$-thick geodesic} if $X$ is $p$-thick quasiconvex with constant $C$ for every $C>1$.
\end{definition}
Here $\Gamma(E,F;C)$ denotes the family of curves $\gamma\colon [0,1]\to X$ joining $E$ and $F$ such that $\ell(\gamma)\leq C\cdot d(\gamma(0),\gamma(1))$. Note that this is equivalent to the original definition in \cite{dur12} when $X$ is infinitesimally doubling.

\begin{theorem}
	\label{thm:minmetric}
	Let $(X,d,\mu)$ be an infinitesimally doubling metric measure space which is $p$-thick quasiconvex with constant~$C$ and $1\leq p \leq \infty$. Then there exists a metric $d_p$ on $X$, for which $X_p:=(X,d_p,\mu)$ is $p$-thick geodesic and $d\leq d_p \leq Cd$. Moreover $d_p$ is minimal among metrics $\rho\ge d$ for which $X_\rho:=(X,\rho,\mu)$ is $p$-thick geodesic. 
\end{theorem}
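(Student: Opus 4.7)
The plan follows the strategy sketched just before the statement. Set
\[
d_p'(x,y) := \lim_{\delta\to 0}\mathrm{ess}\,\ell_p\,\Gamma(B(x,\delta),B(y,\delta)),
\]
where the limit exists by monotonicity in $\delta$, and then define $d_p(x,y)$ to be the infimum of $\sum_i d_p'(x_{i-1},x_i)$ over finite chains $x=x_0,x_1,\ldots,x_n=y$; by construction $d_p$ is the largest pseudometric dominated by $d_p'$.

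The two-sided estimate $d\leq d_p\leq Cd$ is immediate. Every curve joining $B(x,\delta)$ to $B(y,\delta)$ has length at least $d(x,y)-2\delta$, so the essential infimum of lengths satisfies the same lower bound; letting $\delta\to 0$ gives $d\leq d_p'$, and since $d$ itself is a metric we also get $d\leq d_p$ (in particular $d_p$ is a genuine metric). Conversely, $p$-thick quasiconvexity with constant $C$ produces a positive $p$-modulus subfamily of $\Gamma(B(x,\delta),B(y,\delta);C)$ whose elements have $d$-length at most $C(d(x,y)+2\delta)$, so $d_p'\leq Cd$; since $Cd$ is a metric this forces $d_p\leq Cd$.

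Minimality is a direct consequence of the definition. Let $\rho\geq d$ be a metric for which $(X,\rho,\mu)$ is $p$-thick geodesic, and fix $x,y\in X$, $C'>1$, $\epsilon>0$. Thick geodesicity of $\rho$ furnishes a positive $p$-modulus family (with respect to $\rho$) of curves joining $B_\rho(x,\epsilon)$ and $B_\rho(y,\epsilon)$ with $\rho$-length at most $C'(\rho(x,y)+2\epsilon)$. Since $d\leq\rho$ one has $ds_d\leq ds_\rho$ on curves, so every $d$-admissible density is also $\rho$-admissible, and hence this family has positive $p$-modulus with respect to $d$ as well. Its elements lie in $\Gamma(B_d(x,\epsilon),B_d(y,\epsilon))$ with $d$-length bounded by the same constant, so $\mathrm{ess}\,\ell_p\,\Gamma(B_d(x,\epsilon),B_d(y,\epsilon))\leq C'(\rho(x,y)+2\epsilon)$. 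Letting $\epsilon\to 0$ and $C'\to 1^+$ yields $d_p'\leq\rho$, and hence $d_p\leq\rho$.

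The main task is verifying that $(X,d_p,\mu)$ is itself $p$-thick geodesic. Since $d\leq d_p\leq Cd$, the moduli $\Mod_p^d$ and $\Mod_p^{d_p}$ are comparable on curve families, so it suffices to produce, for measurable $E,F$ of positive measure and each $C'>1$, a positive $d$-modulus family of curves joining $E$ and $F$ whose $d_p$-length is at most $C'$ times $d_p$ of the endpoints. Two ingredients are needed. First, one shows $\ell_{d_p}(\gamma)=\ell_d(\gamma)$ for $p$-a.e.\ curve $\gamma$; only the nontrivial inequality requires work, and it follows from the fact that for $p$-a.e.\ $\gamma$ the subarcs of $\gamma$ belong to positive-modulus families witnessing the essential length, giving $d_p(\gamma(t),\gamma(s))\leq d_p'(\gamma(t),\gamma(s))\leq \ell_d(\gamma|_{[t,s]})$ for $t<s$. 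Second, one constructs quasigeodesic families by the chain definition of $d_p$: for Lebesgue density points $x\in E$ and $y\in F$ and a near-optimal chain $x=x_0,\ldots,x_n=y$ with $\sum d_p'(x_{i-1},x_i)<C''d_p(x,y)$ (with $C''$ slightly smaller than $C'$), take positive-modulus families of short curves realizing each link via the definition of $d_p'$, and concatenate, using infinitesimal doubling to pass from ball-to-ball modulus at $x_i$ to modulus of families starting or ending in $E$ and $F$. The principal obstacle is precisely this concatenation: ensuring that the family of concatenated curves retains positive $p$-modulus requires a Fuglede-type argument that handles admissibility at each link and absorbs the admissibility loss at the junctions.
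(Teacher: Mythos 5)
Your bi-Lipschitz bounds $d\le d_p\le Cd$ and the minimality argument are correct and complete; the minimality argument is in fact somewhat more direct than the paper's, which routes through a more general statement (Proposition~\ref{prop:minimality}) about volume preserving $1$-Lipschitz maps from an arbitrary $p$-thick geodesic space. You also correctly identify the first ingredient --- $\ell_{d_p}(\gamma)=\ell_d(\gamma)$ for $p$-a.e.\ curve --- though this rests on the regular-curve machinery of Propositions~\ref{prop:aeupreg} and~\ref{prop:upreg}, which you state but do not prove.

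The genuine gap is the final and central step, proving that $X_p$ is $p$-thick geodesic. You attempt this directly by concatenating positive-modulus families along a near-optimal chain $x=x_0,\dots,x_n=y$ and explicitly flag that ``the principal obstacle is precisely this concatenation.'' The obstacle is real and your proposal does not resolve it: positive $p$-modulus is not preserved under concatenation of curve families in general (an admissible function for the concatenated family does not arise by gluing admissible functions for the pieces without control at the junctions), and it is not clear how infinitesimal doubling would repair this. The paper circumvents the issue entirely by instead establishing that $X_p$ has the $p$-Sobolev-to-Lipschitz property and then invoking the equivalence of Theorem~\ref{thm:sobtolip}. Concretely: the length equality yields $\Mod_{X_p,p}=\Mod_p$, so $X_p$ inherits $p$-thick quasiconvexity with constant $C$ and, by Proposition~\ref{prop:sobtoClip}, any $f\in\Ne p{X_p}$ with $g_f\le 1$ has a $C$-Lipschitz representative $\bar f$. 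One then improves the constant to $1$: for fixed $x,y$ and $\varepsilon>0$, the subfamily of $\Gamma(B(x,\delta),B(y,\delta))$ consisting of curves with length at most $ess\ell_{\id,p}\Gamma(B(x,\delta),B(y,\delta))+\varepsilon$ has positive modulus, hence contains a curve along which $1$ is an upper gradient of $f$; letting $\delta\to 0$ and using continuity of $\bar f$ yields $|\bar f(x)-\bar f(y)|\le d_{\id,p}'(x,y)+\varepsilon$, and since $\bar f$ is $1$-Lipschitz with respect to $d_{\id,p}'$ it is $1$-Lipschitz with respect to the maximal pseudometric $d_p$ dominated by it. This gives the Sobolev-to-Lipschitz property without concatenating any curve families, and Theorem~\ref{thm:sobtolip} then delivers $p$-thick geodesicity. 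To repair your proof, replace the direct construction of quasigeodesic families by this argument.
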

It is known that doubling $p$-Poincar\'e spaces are $p$-thick quasiconvex for some constant depending only on the data of $X$, though the converse need not be true unless $p=\infty$; see \cite{dur12,dur12'}. For $p<q$, the assumption of $p$-thick geodecity is strictly stronger than that of $q$-thick geodecity, see Example~\ref{ex:cuspdomain} below. However, under the assumption of a suitable Poincar\'e inequality the particular value of $q$ is immaterial. Indeed, for Poincar\'e spaces the essential metrics of all indices coincide with the essential metric $d_{ess}$ introduced in \cite{alb18}. The essential metric $d_{ess}$ is given by
\begin{equation}\label{eq:ess}
d_{ess}(x,y):=ess\ell_\infty \Gamma(\{x\},\{y\}),\quad x,y \in X.
\end{equation}

\begin{proposition}\label{prop:nages}
	Let $X$ be a doubling metric measure space satisfying a $p$-Poincar\'e inequality. Then $d_q=d_{ess}$ for every $q\in [p,\infty]$.
\end{proposition}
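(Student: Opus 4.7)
The starting point is the identification $d_{ess} = \tilde d$, the length (intrinsic) metric of $(X, d)$. Indeed, $\Mod_\infty(\Gamma) = 0$ is equivalent to every curve in $\Gamma$ being non-rectifiable, so $ess\,\ell_\infty(\Gamma) = \inf_{\gamma \in \Gamma} \ell(\gamma)$ for every curve family $\Gamma$. Applied to $\Gamma(\{x\},\{y\})$ this gives $d_{ess}(x,y) = \inf\{\ell(\gamma) : \gamma(0) = x,\, \gamma(1) = y\} =: \tilde d(x,y)$, so the proposition reduces to showing $d_q = \tilde d$ for every $q \in [p, \infty]$.

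\textbf{Lower bound $d_q \ge d_{ess}$.} For any curve family $\Gamma$, the inequality $ess\,\ell_q(\Gamma) \ge \inf_{\gamma \in \Gamma} \ell(\gamma)$ holds trivially, since the subfamily of curves strictly shorter than the infimum is empty and thus has zero $q$-modulus. Applied to $\Gamma(B(x,\delta), B(y,\delta))$, this bound equals $\inf_{x' \in B(x,\delta),\, y' \in B(y,\delta)} \tilde d(x', y')$, which converges to $\tilde d(x,y)$ as $\delta \to 0$ by continuity of $\tilde d$ (bi-Lipschitz equivalent to $d$ in the quasi-convex PI space). Hence $d_q'(x,y) \ge \tilde d(x,y)$; since $\tilde d$ is a metric dominated by $d_q'$ and $d_q$ is the largest such metric, I conclude $d_q \ge \tilde d = d_{ess}$.

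\textbf{Upper bound $d_q \le d_{ess}$.} By H\"older's inequality applied to admissible densities on sets of finite measure, $\Mod_q(\Gamma) = 0 \Rightarrow \Mod_p(\Gamma) = 0$ for $q \ge p$, which gives the monotonicity $ess\,\ell_q \le ess\,\ell_p$; so it suffices to treat $q = p$. For any $\epsilon > 0$ and sufficiently small $\delta$, a Semmes-type pencil-of-curves construction in the length metric (where $\tilde d$ is geodesic-like and the bi-Lipschitz transfer of the $p$-Poincar\'e inequality is available) yields a positive-$p$-modulus subfamily of $\Gamma(B(x,\delta), B(y,\delta))$ consisting of curves of length at most $\tilde d(x,y) + \epsilon$. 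This forces $ess\,\ell_p \Gamma(B(x,\delta), B(y,\delta)) \le \tilde d(x,y) + \epsilon$, and passing $\delta, \epsilon \to 0$ gives $d_p'(x,y) \le \tilde d(x,y)$, hence $d_p \le \tilde d = d_{ess}$.

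\textbf{Main obstacle.} The technical heart of the proof is the pencil-of-curves construction yielding curves of length arbitrarily close to $\tilde d(x,y)$ with positive $p$-modulus. The $p$-Poincar\'e inequality on $(X, d)$ directly provides curves only of length $\le C d(x, y)$ for the Poincar\'e/quasi-convexity constant $C$; the refinement to $(1+\epsilon)\tilde d(x,y)$ crucially exploits the fact that $(X,\tilde d,\mu)$ is itself doubling and $p$-Poincar\'e (by bi-Lipschitz invariance) and is additionally a length space, so that pencils can be chosen nearly geodesic. It is precisely this step that upgrades the thick quasi-convexity of $(X,d,\mu)$ to the curve abundance needed to identify $d_p$ with $d_{ess}$.
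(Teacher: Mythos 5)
The proposal breaks down at two key points.

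First, the opening identification $d_{ess}=\tilde d$ (the length metric) rests on the claim that $\Mod_\infty(\Gamma)=0$ is equivalent to every curve in $\Gamma$ being non-rectifiable. This equivalence is false: since $\|\rho\|_\infty$ is the $\mu$-essential supremum, one may take $\rho=+\infty$ on a $\mu$-null Borel set and $\rho=0$ elsewhere, and such $\rho$ has $\|\rho\|_\infty=0$ yet is admissible for any family of rectifiable curves whose images meet that null set. Thus $\essinf_\infty$ over $\Gamma(\{x\},\{y\})$ may legitimately discard rectifiable curves, and one cannot conclude $d_{ess}(x,y)=\inf_\gamma \ell(\gamma)$ without an argument that the near-geodesic curves cannot be concentrated near a null set --- which is itself a thick geodecity statement. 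So the reduction of the proposition to $d_q=\tilde d$ is not justified.

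Second, the substantive step of the upper bound $d_p\le \tilde d$ is attributed to a ``Semmes-type pencil-of-curves'' construction producing curves of length at most $(1+\epsilon)\tilde d(x,y)$ with positive $p$-modulus. The pencil construction available in doubling $p$-Poincar\'e spaces yields curves of length at most $C\,d(x,y)$ with $C$ the quasiconvexity constant, i.e.\ $p$-thick quasiconvexity; it does not by itself produce thick \emph{geodecity}. Passing from constant $C$ to $1+\epsilon$ is precisely the content of the proposition being proved (via the equivalence with the Sobolev-to-Lipschitz property), so as written this step is circular. You rightly flag this as the ``main obstacle,'' but then assert it rather than close it.

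The paper's proof is genuinely different and avoids both problems. After the easy ordering $d\le d_q\le d_p\le Cd$ (from $\Mod_q\Gamma=0\Rightarrow\Mod_p\Gamma=0$), it considers $f(x)=d_p(x_0,x)$, observes via Propositions~\ref{prop:aeupreg} and~\ref{prop:upreg} that $1$ is a $p$-weak upper gradient of $f$ on $X_p$, then invokes the agreement of minimal $p$- and $q$-weak upper gradients in doubling $p$-Poincar\'e spaces (the nontrivial input from \cite[Corollary~A.8]{bjo11}, relying on self-improvement of the Poincar\'e inequality). Since $X_q$ has the $q$-Sobolev-to-Lipschitz property by Theorems~\ref{thm:minmetric} and~\ref{thm:sobtolip}, $f$ is $1$-Lipschitz in $d_q$, giving $d_p\le d_q$ and hence equality; the same argument with $g(x)=d_{ess}(x_0,x)$ closes the case $q=\infty$ and identifies $d_\infty$ with $d_{ess}$. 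If you want a geometric route like yours, you would have to independently establish that doubling geodesic $p$-Poincar\'e spaces are $p$-thick geodesic --- but the cleanest way to do that is again through the weak-upper-gradient agreement, so it does not bypass the paper's machinery.
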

In the proof of Theorem~\ref{thm:discmain} we will set $\widehat{Z}:=(Z,d_2)$. A posteriori, the chosen metric agrees with the essential metric~\eqref{eq:ess}. However from this characterization it is unclear that the disc obtained in Theorem~\ref{thm:discmain} is unique. This follows from the use of 2-modulus in the construction of the metric, and its quasi-invariance under quasisymmetric maps.\par

\subsection{The Sobolev-to-Lipschitz property, thick geodecity, and volume rigidity}
The Sobolev-to-Lipschitz property and thick geodecity are defined and equivalent in a much broader framework.
\begin{definition}\label{def:sobtolip}
 A metric measure space $X$ is said to have the $p$-Sobolev-to-Lipschitz property, if every $f\in \Ne{p}{X}$ for which the minimal $p$-weak upper gradient~$g_f$ satisfies $g_f\le 1$ almost everywhere has a 1-Lipschitz representative.
\end{definition}

To illustrate this equivalence, and contrast the difference of the Sobolev-to-Lipschitz property with merely being geodesic, recall that a proper metric space is geodesic if and only if, whenever a function~$f$ has (genuine) upper gradient~$1$, it is $1$-Lipschitz. The $p$-Sobolev-to-Lipschitz property requires we can reach the same conclusion from the weaker assumption that $f$ has \emph{$p$-weak} upper gradient~$1$. The next result translates this condition to having an abundance of nearly geodesic curves in the space.
\begin{theorem}\label{thm:sobtolip}
Let $X$ be a infinitesimally doubling metric measure space and $p\ge 1$. Then $X$ has the $p$-Sobolev-to-Lipschitz property if and only if it is $p$-thick geodesic.
\end{theorem}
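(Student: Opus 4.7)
The argument splits into two implications, which I handle via different methods.

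\emph{$p$-thick geodecity $\Rightarrow$ $p$-Sobolev-to-Lipschitz.} I would prove this via the contrapositive. Let $f \in \Ne{p}{X}$ satisfy $g_f \le 1$ a.e.~but admit no $1$-Lipschitz representative; I shall produce positive-measure sets $E, F$ and a constant $C' > 1$ with $\Mod_p \Gamma(E, F; C') = 0$. Fix a Borel representative $\bar f$ absolutely continuous along $p$-a.e.~curve, and consider the Lebesgue average $\tilde f(x) := \limsup_{r \to 0}\dashint_{B(x,r)} f\,d\mu$, well-defined by infinitesimal doubling and equal to $\bar f$ on the full-measure set $L$ of Lebesgue points. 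Since any continuous $1$-Lipschitz representative would coincide with $\tilde f$ everywhere, the hypothesis forces $\tilde f$ itself to fail Lipschitzness: pick $x_0, y_0$ and $C > 1$ with $|\tilde f(x_0) - \tilde f(y_0)| > C\,d(x_0, y_0)$. By Lebesgue continuity on $L$, for sufficiently small $r, \epsilon > 0$ I would find positive-measure subsets $E \subset B(x_0, r) \cap L$ and $F \subset B(y_0, r) \cap L$ on which $\tilde f$ varies by at most $\epsilon$, whence $|\bar f(x) - \bar f(y)| > C'd(x,y)$ on $E \times F$ for some $C' \in (1, C)$. For $p$-a.e.~$\gamma \in \Gamma(E, F)$ the endpoints lie in $E \cup F \subset L$, so the upper gradient inequality gives $\ell(\gamma) \ge |\bar f(\gamma(0)) - \bar f(\gamma(1))| > C'd(\gamma(0), \gamma(1))$, i.e.~$\gamma \notin \Gamma(E, F; C')$. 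Hence $\Mod_p \Gamma(E, F; C') = 0$, contradicting $p$-thick geodecity.

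\emph{$p$-Sobolev-to-Lipschitz $\Rightarrow$ $p$-thick geodecity.} Also contrapositively: assume $\Mod_p \Gamma(E, F; C) = 0$ with $E, F$ of positive measure and $C > 1$, and rescale so that $d(E, F) = 1$. I would construct $u \in \Ne{p}{X}$ with $g_u \le 1$ a.e.~but no $1$-Lipschitz representative. Pick admissible densities $\rho_n \ge 0$ with $\|\rho_n\|_p \to 0$ and set
\[
u_n(x) := \min\Bigl\{1 + \delta,\ \inf_\gamma \bigl(\int_\gamma \rho_n\, ds + \tfrac{1}{C}\ell(\gamma)\bigr)\Bigr\},
\]
the infimum ranging over rectifiable curves from $x$ to $E$. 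A concatenation argument shows that $\rho_n + 1/C$ is a classical upper gradient of $u_n$; clearly $u_n \equiv 0$ on $E$. For $x \in F$ a case split on the connecting curve $\gamma$ — $C$-quasiconvex curves satisfy $\int_\gamma \rho_n \ge 1$ by admissibility, while others have $\ell(\gamma) > C\, d(E, F) = C$ — yields $u_n \ge 1$ on $F$. Rescaling the admissible density to $(1+\delta)\rho_n$ and the length coefficient to $(1+\delta)/C$ sharpens this to $u_n \ge 1 + \delta$ on $F$, with upper gradient $(1+\delta)(\rho_n + 1/C)$. A truncation lemma — replacing $\rho_n$ by $\min(\rho_n, M)$ for $M$ slightly below $1/(1+\delta) - 1/C$, and absorbing the $p$-null exceptional family created by truncation into a further vanishing-$L^p$-norm density, crucially using the lower length bound $\ell(\gamma) \ge d(E, F) > 0$ — secures a bounded admissible with truncated upper gradient at most $1$ a.e. Hence $g_{u_n} \le 1$ a.e., but any $1$-Lipschitz representative of $u_n$ would equal $0$ a.e.~on $E$ and $\ge 1 + \delta$ a.e.~on $F$, contradicting Lipschitzness on a positive-measure set of pairs in $E \times F$ with $d(x, y) < 1 + \delta$.

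\emph{Main obstacle.} Direction (1) is clean and uses only Lebesgue differentiation and the upper gradient inequality; no Fubini-type extraction of positive-measure endpoint pairs from positive $p$-modulus is required. The principal technical difficulty is the truncation lemma in Direction (2). Arbitrary modulus-zero curve families need not admit bounded admissible densities with small $L^p$-norm (consider a single short curve in Euclidean space), but for $\Gamma(E, F; C)$ the uniform positive length bound $\ell(\gamma) \ge d(E, F) = 1$ combined with iterated truncation and absorption against the successive exceptional families delivers the bounded admissible one needs.
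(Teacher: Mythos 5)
Your Direction~1 (thick geodecity implies Sobolev-to-Lipschitz) is correct and is a mild variant of the paper's argument. The paper proves the implication directly via Lusin's theorem (Proposition~\ref{prop:sobtoClip}): restrict $u$ to sets $X\setminus U_m$ where it is continuous, apply thick quasiconvexity to density points of these sets, and pass to the limit in $\delta$ and $m$. You argue contrapositively via Lebesgue points of the representative $\tilde f$. Both approaches rely on the same essential tools (Proposition~\ref{prop:hkst} and the weak upper gradient inequality along curves of positive modulus), and yours works provided you note that $x_0,y_0$ may be chosen in the full-measure set $L$ of Lebesgue points (the Lipschitz extension of $\tilde f|_L$ from the dense set $L$ to all of $X$ would furnish the forbidden representative). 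This is a legitimate alternative.

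Direction~2 has a genuine gap in the ``truncation lemma.'' You wish to produce a \emph{bounded} admissible density for $\Gamma(E,F;C)$ with small $L^p$-norm. This does not follow from $\Mod_p\Gamma(E,F;C)=0$ together with the lower length bound $\ell(\gamma)\ge d(E,F)$. Indeed, a curve family may have zero $p$-modulus and uniformly bounded-below length yet admit no bounded admissible density with small $L^p$-norm: for instance the radial segments through the origin in $\mathbb R^2$ of length one all pass through $0$, have zero $2$-modulus, and any admissible $\rho\le M$ supported near the segments satisfies a uniform lower bound on $\|\rho\|_2$ (put $\rho=M$ on a disc of radius $a$ about $0$ with $Ma=1/2$; then $\|\rho\|^2_{2}\gtrsim M^2a^2$ is constant). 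The fix you suggest---``absorbing the exceptional family into a further vanishing-$L^p$-norm density''---is circular: after truncating $\rho_n$ to $\min(\rho_n,M)$, the subfamily $\Gamma_n'\subset\Gamma(E,F;C)$ on which admissibility is lost has zero modulus (trivially, as a subfamily), but any density $\sigma_n$ that blocks $\Gamma_n'$ is again unbounded, and defining $u_n$ with the truncated density no longer forces $u_n\ge 1+\delta$ on $F$ because the infimum can be attained along curves in $\Gamma_n'$. The paper sidesteps this entirely: choose $g\in L^p$ with $\int_\gamma g=\infty$ for all $\gamma\in\Gamma(E,F;C)$, and set
\[
v(x)=\lim_{n\to\infty}\inf\Bigl\{\textstyle\int_\gamma(1+g/n):\gamma\in\Gamma(E,x)\Bigr\}.
\]
Scaling $g/n$ (rather than truncating) preserves $\int_\gamma g/n=\infty$ on $\Gamma_0=\{\int_\gamma g=\infty\}$, so $v\ge CD$ on $F$ for every $n$, while $1$ is a $p$-\emph{weak} upper gradient of $v$ with exceptional family $\Gamma_0$. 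The crucial point you miss is that the Sobolev-to-Lipschitz property only requires a weak upper gradient bound, so one never needs a pointwise-bounded admissible density; this is what makes the paper's $\lim_n$ construction work where the truncation strategy cannot.
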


These properties are also related to \emph{volume rigidity}, which asks whether any volume preserving $1$-Lipschitz map of a given class is an isometry. Recall that a Borel map $f:(X,\mu)\to (Y,\nu)$ between metric measure spaces is volume preserving if $f_*\mu=\nu$. When $X$ and $Y$ are $n$-rectifiable, we equip them with the Hausdorff measure $\Ha^n$, unless otherwise explicitly stated. Volume preserving 1-Lipschitz maps between rectifiable spaces are essentially length preserving, see Proposition \ref{thm:volrig}, but need not be homeomorphisms. For volume preserving 1-Lipschitz homeomorphisms with quasiconformal inverse, the Sobolev-to-Lipschitz property guarantees isometry.

\begin{proposition}\label{prop:volrig}
	Let $f:X\to Y$ be a volume preserving 1-Lipschitz homeomorphism between $n$-rectifiable metric spaces. If $f\inv$ is quasiconformal and $Y$ has the $n$-Sobolev-to-Lipschitz property, then $f$ is an isometry.
\end{proposition}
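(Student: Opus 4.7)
My plan is to prove that $f\inv$ is $1$-Lipschitz; combined with the hypothesis that $f$ is $1$-Lipschitz this immediately yields that $f$ is an isometry. First, I would invoke Proposition~\ref{thm:volrig} to see that $f$ is essentially length preserving, i.e.\ the family $\Gamma_0$ of non-constant curves $\gamma$ in $X$ for which $\ell(f\circ\gamma)<\ell(\gamma)$ has vanishing $n$-modulus. Since $f\inv$ is quasiconformal, $n$-modulus is quasi-preserved under $f\inv$, so the image family $\{f\circ\gamma:\gamma\in\Gamma_0\}$ in $Y$ also has $n$-modulus zero. Consequently, for $n$-almost every curve $\sigma$ in $Y$, the reparametrized curve $f\inv\circ\sigma$ lies outside $\Gamma_0$, and hence $\ell(f\inv\circ\sigma)=\ell(\sigma)$.

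Next, I fix $y_0\in Y$ and consider the continuous function $u(y):=d_X(f\inv(y_0),f\inv(y))$. For $n$-a.e.\ curve $\sigma\colon[a,b]\to Y$, the reverse triangle inequality together with the length identity above yields
\begin{equation*}
|u(\sigma(a))-u(\sigma(b))|\le d_X(f\inv(\sigma(a)),f\inv(\sigma(b)))\le\ell(f\inv\circ\sigma)=\ell(\sigma),
\end{equation*}
so the constant function $g\equiv 1$ is an $n$-weak upper gradient of $u$.

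To exploit the Sobolev-to-Lipschitz property of $Y$ I truncate: given an arbitrary target point $y_1\in Y$, choose $R>d_X(f\inv(y_0),f\inv(y_1))$ and set $v:=(R-u)^+$. Then $v$ is continuous and bounded by $R$, with support contained in $f(\overline B_X(f\inv(y_0),R))$, a set of finite $\mathcal H^n$-measure since $f$ is volume preserving and $X$ is $n$-rectifiable. The chain rule for post-composition with $1$-Lipschitz maps gives $g_v\le 1$ almost everywhere, so $v\in\Ne{n}{Y}$. The $n$-Sobolev-to-Lipschitz property then provides a $1$-Lipschitz representative of $v$, which must coincide with the continuous $v$ pointwise. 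Evaluating at $y_0$ and $y_1$ gives
\begin{equation*}
d_X(f\inv(y_0),f\inv(y_1))=v(y_0)-v(y_1)\le d_Y(y_0,y_1),
\end{equation*}
and the arbitrariness of $y_0,y_1$ establishes that $f\inv$ is $1$-Lipschitz.

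The main obstacle I anticipate is the passage from the modulus-almost-everywhere length identity to a genuine pointwise Lipschitz bound for $f\inv$. Essential length preservation of $f$ alone is too weak, as it leaves open the possibility that $f\inv$ stretches distances along exceptional subfamilies of curves. Quasiconformality of $f\inv$ is needed precisely to transfer the $n$-exceptional family from $X$ to $Y$, and the $n$-Sobolev-to-Lipschitz property then upgrades the resulting upper gradient bound on $u$ into the desired pointwise Lipschitz inequality.
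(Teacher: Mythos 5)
Your proposal is correct and follows the same logical chain as the paper's proof: use Proposition~\ref{thm:volrig} to see that $f$ is essentially length preserving, transfer the exceptional family to $Y$ via quasiconformality of $f\inv$, conclude that $1$ is an $n$-weak upper gradient of $f\inv$, and then invoke the Sobolev-to-Lipschitz property. The only difference is organizational: the paper applies the $n$-Sobolev-to-Lipschitz property directly to $f\inv$ viewed as a metric-space-valued Sobolev map via Lemma~\ref{lem:truesobtolip}, whereas you inline that lemma's truncation mechanism, working with the single scalar function $u(y)=d_X(f\inv(y_0),f\inv(y))$ and varying the base point; both routes are sound and essentially interchangeable.
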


Note that the inverse of a quasiconformal map is not quasiconformal in general; cf. \cite[Remark 4.2]{wil12b}. This is guaranteed under the assumption that the spaces are Ahlfors $n$-regular and $X$ supports an $n$-Poincar\'e inequality. The assumption that $Y$ has the Sobolev-to-Lipschitz property is essential for the validity of Proposition \ref{prop:volrig}; cf. Example \ref{ex:volrigfail}. Thus, Proposition \ref{prop:volrig} connects volume rigidity, the Sobolev-to-Lipschitz property and thick geodecity under these fairly restrictive assumptions, and will be used in the proof of Theorem \ref{thm:discmain}. 

\subsection{Application: Essential minimal surfaces and essential pull-back metrics}\label{sec:app}
For the definitions and terminology in the forthcoming discussion, we refer the reader to Section \ref{sec:preli}.

A smooth map $u:M\rightarrow X$ between Riemannian manifolds gives rise to a pull-back metric on $M$. In the case that $M$ and $X$ are merely metric spaces and $u$ a continuous map, various nonequivalent definitions of pull-back metric are discussed in~\cite{pet19,pet10}. The metric
\begin{equation*}
d_u(p,q):=\inf_\gamma \ell(u \circ \gamma),\quad p,q\in M,
\end{equation*} 
where $\gamma$ varies over all continuous curves in $M$ joining $p$ to $q$, was used in \cite{lyt18,sta18,pet19} to study solutions $u:\overline\D\to X$ of the Plateau problem in~$X$, also referred to as \emph{minimal discs}. If $X$ satisfies a quadratic isoperimetric inequality and $u$ bounds a Jordan curve of finite length, then~$d_u$ gives rise to a factorization
\begin{equation*}
\begin{tikzcd}
\overline\D \arrow{rr}{u} \arrow[swap]{dr}{P_u} && X\\
& Z_u \arrow[swap]{ur}{\bar u}
\end{tikzcd}
\end{equation*}
where:
\begin{itemize}
\item[(1)] $Z_u$ is a geodesic disc which satisfies the same quadratic isoperimetric inequality as $X$.
\item[(2)] $P_u$ is a minimal disc bounding $\partial Z_u$ and a uniform limit of homeomorphisms.
\item[(3)] $\bar{u}$ is $1$-Lipschitz and $\ell(P_u\circ \gamma)=\ell(u\circ \gamma)$ for every curve $\gamma$ in $\overline{\D}$.
\end{itemize}
The fact that $Z_u$ satisfies the same quadratic isoperimetric inequality as $X$ reflects the fact that the intrinsic curvature of a classical minimal surface is bounded above by that of the ambient manifold; cf. \cite{lyt18'}.

 
Our definition of essential metrics suggests the following variant of pull-back metrics in the minimal disc setting. Namely for a family of curves $\Gamma$ in $\overline{\D}$ let $ess\ell_{u}(\Gamma)$ be the essential infimum of $\ell(u\circ \gamma)$ with respect to $2$-modulus; cf. Definition \ref{def:essinf}. Then for $p,q\in \overline{\D}$ we define the essential pull-back metric by
\begin{equation*}
\widehat{d}_u(p,q):=\lim_{\delta\rightarrow 0} ess\ell_{u}\Gamma(B_\delta(p),B_\delta(q)). 
\end{equation*}
This construction gives rise to the following factorization result.

\begin{theorem}\label{cor:plateau}
Let $X$ be a proper metric space satisfying a $(C,l_0)$-quadratic isoperimetric inequality. Let $u:\overline{\D}\rightarrow X$ be a minimal disc bounding a Jordan curve~$\Gamma$ which satisfies a chord-arc condition. Then there exists a geodesic disc $\widehat Z_u$ and a factorization 
\begin{equation*}
\begin{tikzcd}
\overline\D \arrow{rr}{u} \arrow[swap]{dr}{\widehat{P}_u} && X\\
& \widehat Z_u \arrow[swap]{ur}{\widehat u}
\end{tikzcd}
\end{equation*}
such that
\begin{itemize}
\item[(1)] $\widehat Z_u$ has the Sobolev-to-Lipschitz property and 
satisfies a $(C,l_0)$-quadratic isoperimetric inequality,
\item[(2)] $\widehat{P}_u$ a minimal disc bounding $\partial \widehat{Z}_u$ and a uniform limit of homeomorphisms, and
\item[(3)] $\widehat{u}:\widehat{Z}_u\rightarrow X$ is $1$-Lipschitz and for $2$-almost every curve $\gamma$ in $\overline{\D}$ one has $\ell(\widehat{P}_u \circ \gamma)=\ell(u \circ \gamma)$.
\end{itemize}
Furthermore the factorzation is maximal in the following sense: If $\widetilde Z$ is a metric disc and $u=\tilde u\circ\widetilde P$, where $\widetilde P:\overline\D\to \widetilde Z$ and $\tilde u:\widetilde Z\to X$ satisfy $(2)$ and $(3)$, then there exists a surjective $1$-Lipschitz map $f:\widehat{Z}_u\rightarrow \widetilde Z$ such that $\widetilde P= f\circ \widehat{P}_u$.

\end{theorem}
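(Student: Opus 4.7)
The plan is to mimic the construction of the essential metric from Theorem~\ref{thm:minmetric} on $\overline{\D}$, with the pull-back length $\ell(u\circ\gamma)$ in place of $\ell(\gamma)$. First, one verifies that $\widehat{d}_u$ (after passing to the largest genuine metric dominated by the limit expression, as in Section~3) is a pseudometric on $\overline{\D}$ with $d_u\leq \widehat{d}_u\leq C\cdot d_u$ for some constant $C$ coming from the chord-arc assumption on $\Gamma$ and the $2$-thick quasiconvexity of $\overline{\D}$. Define $\widehat{Z}_u$ as the metric quotient of $\overline{\D}$ by the zero set of $\widehat{d}_u$, with $\widehat{P}_u$ the quotient map. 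Since $d_u\leq \widehat{d}_u$, the identity on $\overline{\D}$ descends to a surjective $1$-Lipschitz map $h:\widehat{Z}_u\to Z_u$ satisfying $P_u=h\circ\widehat{P}_u$, and one sets $\widehat{u}:=\bar u\circ h$, which is automatically $1$-Lipschitz and factors $u=\widehat{u}\circ\widehat{P}_u$.

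Next, I would verify the length identity in (3). For $2$-a.e.\ curve $\gamma$ in $\overline{\D}$, the definition of $\widehat{d}_u$ via a $2$-modulus essential infimum yields $\ell(\widehat{P}_u\circ\gamma)=\ell(u\circ\gamma)$, matching the classical identity $\ell(P_u\circ\gamma)=\ell(u\circ\gamma)$. In particular $\widehat{P}_u$ has the same Reshetnyak energy as $P_u$, whence $\widehat{P}_u$ is a minimal disc bounding $\partial\widehat{Z}_u$. That $\widehat{P}_u$ is a uniform limit of homeomorphisms would be deduced from the corresponding property of $P_u$, by showing that the fibers of $h$ are cell-like; this should follow from the fact that any two points in the same fiber can be joined by curves along which $u$ has vanishing essential length.

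For item~(1) I appeal to Theorem~\ref{thm:sobtolip}: it suffices to check that $\widehat{Z}_u$ is $2$-thick geodesic, which follows by essentially repeating the proof of Theorem~\ref{thm:minmetric}, since $\widehat{d}_u$ is by construction the $2$-essential infimum of pull-back lengths. The quadratic isoperimetric inequality for $\widehat{Z}_u$ should be inherited from that of $X$: a rectifiable Jordan curve in $\widehat{Z}_u$ is mapped by $\widehat{u}$ to a curve of the same length in $X$, and the associated filling disc can be pulled back via the Lytchak--Wenger construction applied to this new minimal disc $\widehat P_u$, reproducing the QII with the same $(C,l_0)$ constants.

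For maximality, let $\widetilde{P}:\overline{\D}\to\widetilde{Z}$ and $\tilde{u}:\widetilde{Z}\to X$ satisfy (2) and (3). Then for $p,q\in\overline{\D}$ and $\delta>0$, $2$-a.e.\ curve $\gamma\in\Gamma(B(p,\delta),B(q,\delta))$ satisfies $\ell(\widetilde{P}\circ\gamma)=\ell(u\circ\gamma)$, so $d_{\widetilde{Z}}(\widetilde{P}(p),\widetilde{P}(q))\leq \operatorname{ess}\ell_u\Gamma(B(p,\delta),B(q,\delta))$; sending $\delta\to 0$ yields $d_{\widetilde{Z}}(\widetilde{P}(p),\widetilde{P}(q))\leq\widehat{d}_u(p,q)$, and the claimed map $f:\widehat{Z}_u\to\widetilde{Z}$ is well-defined, $1$-Lipschitz and surjective. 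I expect the main obstacle to be propagating the quadratic isoperimetric inequality through the quotient and ensuring that the fibers of $h$ are sufficiently well-behaved (connected, cell-like) so that $\widehat{Z}_u$ remains a topological disc and $\widehat{P}_u$ is approximable by homeomorphisms; both issues should be addressable by combining the essential modulus tools from the earlier sections with the regularity theory for minimal discs under a quadratic isoperimetric inequality.
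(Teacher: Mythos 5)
Your overall strategy -- build $\widehat Z_u$ directly as the essential pull-back metric quotient of $\overline\D$ and then verify (1)--(3) and maximality -- is a plausible blueprint, and several pieces (the factorization through $h$, the length identity for $2$-a.e.\ curve, and the maximality argument) are essentially right. But the paper proceeds differently: it first passes to the Lytchak--Wenger intrinsic disc $Z_u$ with $u=\bar u\circ P_u$, observes that $\ell(P_u\circ\gamma)=\ell(u\circ\gamma)$ for \emph{every} curve $\gamma$ (so that $\widehat Z_u=\widehat Z_{P_u}$ and $\widehat P_u=\widehat P_{P_u}$), and then invokes the disc version Theorem~\ref{maxdiscgen}, where the hard technical work lives. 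Your proposal tries to reproduce that work from scratch, and in doing so has at least two genuine gaps.

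The most serious is the quadratic isoperimetric inequality. You assert that ``a rectifiable Jordan curve in $\widehat Z_u$ is mapped by $\widehat u$ to a curve of the same length in $X$,'' but this is false: $\widehat u$ is only $1$-Lipschitz, and length preservation along $\widehat P_u\circ\gamma$ is available only for $2$-a.e.\ $\gamma$ in $\overline\D$, not for arbitrary curves in $\widehat Z_u$. (This is precisely the regularity trade-off the construction makes relative to $Z_u$, where $\bar u$ \emph{is} length-preserving along all $P_u\circ\gamma$.) The paper's proof of the QII in Theorem~\ref{maxdiscgen}(5) is a careful adaptation of \cite[Theorem 8.2]{lyt18}: it works with Jordan domains $V\subset\overline\D$, uses $\mathrm{Area}(\widehat P_u|_V)=\mathrm{Area}(u|_V)\le C\ell(u|_{\partial V})^2\le C\ell(\widehat P_u|_{\partial V})^2$ (the last inequality from $1$-Lipschitzness of $\widehat u$, in the \emph{opposite} direction from what you use), and needs the ad hoc notion of ``sufficiently connected'' points to fix the failure of \eqref{eq:1} for arbitrary pairs. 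Your sketch cannot be repaired by a trivial modification.

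The Sobolev-to-Lipschitz claim is also underdeveloped. Repeating the proof of Theorem~\ref{thm:minmetric} does not work as stated, because $\widehat P_u$ is not a modulus isometry between $\overline\D$ and $\widehat Z_u$: one needs the modulus inequality $\Mod_2\Gamma\le K\Mod_2\widehat P_u(\Gamma)$ from Theorem~\ref{thm:pullbacksobtolip1}, which in turn relies on the infinitesimal quasiconformality of the minimal disc $u$ (the Jacobian condition $g_u^2\le KJu$), together with the identification $\nu_u=\Ha^2_{\widehat Z_u}$ so the SL property holds for the Hausdorff measure. Relatedly, the claimed sandwich $d_u\le\widehat d_u\le Cd_u$ with a constant is not established in the paper and is not needed: the paper only derives the H\"older estimate \eqref{eq:duestimate} against the Euclidean metric, which suffices for continuity of $\widehat P_u$ but does not produce a bi-Lipschitz comparison with the Lytchak--Wenger metric $d_u$. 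Finally, ``same energy as $P_u$ hence minimal disc'' skips the comparison argument via $\widehat u$ used in the paper to rule out competitors with smaller energy or area in $\Lambda(\partial\widehat Z_u,\widehat Z_u)$.
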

A Jordan curve is said to satisfy a chord-arc condition if it is bi-Lipschitz equivalent to $S^1$.
The fact that the map $\widehat{P}_u$ is a minimal disc implies additional regularity. Indeed, $\widehat P_u$ is infinitesimally quasiconformal, H\"older continuous and has global higher integrability; cf. \cite{lyt17,lyt17',lyt17'',lyt18}.\par

Compared to the construction in \cite{lyt18}, we trade off some regularity of $\overline{u}$ in exchange for the Sobolev-to-Lipschitz property on $\widehat{Z}_u$. Indeed, $\widehat u$ only preserves the length of almost every curve, while $\widehat Z_u$ is nicer in analytic and geometric terms. Note however that some geometric properties of $\widehat{Z}_u$, established for $Z_u$ in \cite{lyt18,sta18}, remain open for $\widehat Z_u$. In particular we do not know whether $\Ha^1(\partial\widehat Z_u)<\infty$.\par 
Note that, in contrast to Theorem~\ref{thm:discmain}, no characterization in terms of the Sobolev-to-Lipschitz property in Theorem~\ref{cor:plateau} is possible. In fact it may happen that $Z_u$ already has the Sobolev-to-Lipschitz property but still $\widehat{Z}_u$ and $Z_u$ are not isometric, see Example~\ref{ex:segcollapsed} below.\par
More generally when $M$ supports a $Q$-Poincar\'e inequality, where $Q\ge 1$ satisfies~\eqref{eq:relvolgrowth}, maps $u\in\Neml pMX$, with \emph{a priori higher integrability} $p>Q$, give rise to a $p$-essential pull-back metric on $M$ and the resulting metric measure space has the \emph{$Q$-Sobolev-to-Lipschitz property}; cf. Definition \ref{def:sobtolip} and Theorem~\ref{thm:pullbacksobtolip1}. Higher integrability of quasiconformal maps is known to hold when $M$ and $X$ both have $Q$-bounded geometry, and also in the situation of \cite[Theorem 1.4]{lyt17''}, but not in general; cf. \cite{hei98,hei00,kor09}. Together with the Poincar\'e inequality and Morrey's embedding higher integrability implies finiteness of the essential pull-back metric. The assumption in Theorem~\ref{cor:plateau} that $\Gamma$ is not only of finite length but satisfies a chord-arc condition is needed to guarantee the global higher integrability.

\subsection{Organization} 

The paper is organized as follows. After the preliminaries in Section \ref{sec:preli}, we treat the equivalence of thick geodecity and the Sobolev-to-Lipschitz property in Section \ref{sec:equivalence}, where we prove Theorem \ref{thm:sobtolip}. Volume rigidity is discussed in Section \ref{sec:volrig}, which includes the proof of Proposition \ref{prop:volrig}.

The construction of essential metrics is presented in Section \ref{sec:esspullback}. We develop some general tools in Section \ref{sec:regcurves}, and prove Theorem \ref{thm:minmetric} and Proposition~\ref{prop:nages} in Section~\ref{sec:sobtolip}. In Section~\ref{sec:pullbackhomeo} we apply our constuction to Sobolev maps from a Poincar\'e space, and derive some basic properties of the resulting metric measure space; cf. Theorem \ref{thm:pullbacksobtolip}.

Finally, in Section \ref{sec:discs}, we apply the results from Sections~\ref{sec:sobtolip} and~\ref{sec:pullbackhomeo} to obtain Theorems~\ref{thm:discmain} and~\ref{cor:plateau}. First in Section~\ref{sec:spheres} we prove Theorem~\ref{thm:discmain}. Then in Section~\ref{sec:discss} we prove a weaker but more general result for discs, cf. Theorem~\ref{maxdiscgen}, and finally in Section~\ref{sec:plat} we discuss Theorem~\ref{cor:plateau}.

\subsection*{Acknowledgements}
The authors would like to thank Alexander Lytchak and Stefan Wenger for helpful discussions.

\section{Preliminaries}\label{sec:preli}
We refer to the monographs \cite{HKST07,bjo11} for the material discussed below. Given a metric space $(X,d)$, balls in $X$ are denoted $B(x,r)$ and, if $B=B(x,r)\subset X$ is an open ball and $\sigma>0$, $\sigma B$ denotes the ball with the same center as $B$ and radius~$\sigma r$. The Hausdorff measure on $X$ is denoted $\Ha^n$ or, if we want to stress the space or metric, by $\Ha^n_X$ or $\Ha^n_d$. The normalizing constant is chosen so that $\Ha^n_{\R^n}$ agrees with the Lebesgue measure.

By a measure $\mu$ on a metric space $X$ we mean an outer measure which is Borel regular. A triple $(X,d,\mu)$ where $(X,d)$ is a proper metric space and $\mu$ is a measure on $X$ which is non-trivial on balls, is called a \emph{metric measure space}. Note that all metric measure spaces in our convention are complete, separable and every closed bounded subset is compact. Furthermore all balls have positive finite measure and in particular the measures are Radon, cf \cite[Corollary 3.3.47]{HKST07}. We often abbreviate $X=(X,d,\mu)$ when the metric and measure are clear from the context.

Given a curve $\gamma:[a,b]\to X$, we denote by  $\ell(\gamma)$ its length. When $\gamma$ is absolutely continuous we write $|\gamma_t'|$ for the metric speed (at $t\in [a,b]$) of $\gamma$. Moreover, when $\gamma$ is rectifiable (i.e. $\ell(\gamma)<\infty$) we denote by $\bar\gamma:[a,b]\to X$ the \emph{constant speed parametrization} of $\gamma$. This is the (Lipschitz) curve satisfying
\[
\gamma(a+(b-a)\ell(\gamma|_{[a,t]})/\ell(\gamma))=\bar\gamma(t),\quad t\in [a,b].
\]
The \emph{arc length parametrization} $\gamma_s$ of $\gamma$ is the reparametrization of $\bar\gamma$ to the interval $[0,\ell(\gamma)]$. With the exception of Proposition \ref{prop:upreg}, we assume curves are defined on the interval $[0,1]$.

Given a metric measure space $X$, a Banach space $V$, and $p\ge 1$, we denote by $L^p(X;V)$ and $L^p_{\loc}(X;V)$ the a.e.-equivalence classes of $p$-integrable and locally $p$-integrable $\mu$-measurable maps $u\colon X\to V$. We also denote $L^p(X):=L^p(X;\R)$ and $L^p_{\loc}(X):=L^p_{\loc}(X;\R)$ and abuse notation by writing $f\in L^p(X;V)$ or $f\in L_{\loc}^p(X;V)$ for \emph{maps} $f$ (instead of equivalence classes).

\subsubsection*{Properties of measures} A measure $\mu$ on $X$ is called \emph{doubling} if there exists $C\ge 1$ such that
\begin{equation}
\mu(B(x,2r))\le C\mu(B(x,r))
\end{equation}
for every $x\in X$ and $0<r<\diam X$. The least such constant is denoted $C_\mu$ and called the doubling constant of $\mu$. Doubling measures satisfy a relative volume lower bound
\begin{equation}\label{eq:relvolgrowth}
C\left(\frac{\diam B}{\diam B'}\right)^Q\le \frac{\mu(B)}{\mu(B')}\textrm{ for all balls }B\subset B'\subset X,
\end{equation}
for some constants $C>0$ and $Q\le \log_2C_\mu$ depending only on $C_\mu$. The opposite inequality with the same exponent $Q$ need not hold. If there are constants $C,Q >0$ such that
\begin{align*}
\frac 1C r^Q\le \mu(B(x,r))\le Cr^Q,\quad 0<r<\diam X,
\end{align*}
we say that $\mu$ is \emph{Ahlfors $Q$-regular}.

If $\mu$ is doubling and
\[
\M_rf(x)=\sup_{0<s<r}\dashint_{B(x,s)}|f|\ud\mu
\]
denotes the (restricted) centered maximal function of $f$ at $x\in X$ the sublinear operator $\M_r$ satisfies the usual boundedness estimates
\begin{align}\label{eq:maximalfunctionbound}
\|\M_rf\|_{L^p(X)}\le C\|f\|_{L^p(X)}\ (p>1),\quad \mu(\{ \M_rf>\lambda \})\le C\frac{\|f\|_{L^1(X)}}{\lambda},\ \lambda>0,
\end{align}
for some constant $C$ depending only on $C_\mu$. Consequently almost every point of a $\mu$-measurable set $E\subset X$ is a Lebesgue density point.

If the measure $\mu$ satisfies the \emph{infinitesimal doubling condition} 
\begin{equation*}
\limsup_{r\to 0}\frac{\mu(B(x,2r))}{\mu(B(x,r))}<\infty
\end{equation*}
for $\mu$-almost every $x\in X$, the claim about density points still remains true.

\begin{proposition}\cite[Theorem 3.4.3]{HKST07}\label{prop:hkst}
	If $(X,d,\mu)$ is infinitesimally doubling metric measure space and $f\in L^1_{\loc}(X)$ then 
	\[
	f(x)=\lim_{r\to 0}\dashint_{B(x,r)}f\ud\mu
	\]
	for $\mu$-almost every $x\in X$. In particular, $\mu$-almost every point of a Borel set $E\subset X$ is a Lebesgue density point.
\end{proposition}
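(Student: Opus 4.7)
The plan is to reduce the claim to the classical Lebesgue differentiation theorem for genuinely doubling measures via a measurable decomposition of $X$. For $N,k\in\N$ I set
\[
A_{N,k}:=\{x\in X:\mu(B(x,2r))\le N\mu(B(x,r))\text{ for all }0<r<1/k\}.
\]
These sets are Borel, monotone in both indices, and, by the infinitesimal doubling hypothesis, cover $X$ up to a $\mu$-null set. Hence it suffices to prove the differentiation formula $\mu$-almost everywhere on each fixed $A=A_{N,k}$.

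Next I would derive a weak-type $(1,1)$ estimate for a localized centered maximal operator restricted to $A$. For $x\in A$ the definition of $A$ guarantees $\mu(B(x,2s))\le N\mu(B(x,s))$ whenever $s<1/k$, and a standard $5B$-covering argument applied only to balls with centers in $A$ and radii below $1/(2k)$ yields the bound
\[
\mu\bigl(\{x\in A:\M_{1/(2k)}f(x)>\lambda\}\bigr)\le \frac{C(N)}{\lambda}\|f\|_{L^1(X)},\qquad \lambda>0.
\]
The crucial point is that this covering argument requires the doubling inequality only for balls centered in $A$ at scales below $1/k$, which is exactly what the definition of $A$ provides.

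With the weak bound in hand, I would run the usual density argument. Since $\mu$ is Radon, continuous compactly supported functions are dense in $L^1(2B)$ for any bounded ball $B\subset X$, so given $f\in L^1_{\loc}(X)$ and $\varepsilon>0$ I choose continuous $g$ of compact support with $\|f-g\|_{L^1(2B)}<\varepsilon$. For $g$ the averages $\dashint_{B(x,r)}g\ud\mu$ converge pointwise to $g(x)$ by continuity, while the weak-type bound applied to $f-g$ controls the exceptional set. Letting $\varepsilon\to 0$ shows that the set of $x\in A\cap B$ where the limit fails to equal $f(x)$ is $\mu$-null; a countable exhaustion by such balls finishes the argument on $A$, and hence on $X$. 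The density point assertion follows by applying the result to $f=\chi_E$.

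The main obstacle is making the covering and maximal-function argument go through in the absence of global doubling. This is precisely resolved by the reduction to $A_{N,k}$: the $5B$-covering lemma needs only the doubling inequality for balls centered at the points one is trying to cover, so the classical proof transfers verbatim once one restricts to small balls centered in $A_{N,k}$ and keeps careful track of constants depending only on $N$.
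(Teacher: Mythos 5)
The paper offers no proof of this proposition; it simply cites \cite[Theorem~3.4.3]{HKST07}. So the relevant comparison is with the reference's argument, and your proposal is indeed a faithful reconstruction of the standard route: localize to Borel sets on which uniform doubling holds at small scales, prove a restricted weak-type $(1,1)$ maximal estimate there via a $5B$-covering argument (which is a purely metric statement requiring no global doubling), then run the usual density argument with continuous compactly supported functions, which are dense since $\mu$ is Radon on a proper space. This is, in substance, the same mechanism HKST use (their Vitali covering theorem for infinitesimally doubling measures is proved by precisely this kind of exhaustion, and the differentiation theorem follows). Two technical points you gloss over and should tighten: first, $A_{N,k}$ as written is an uncountable intersection of Borel sets, so its Borel measurability is not automatic; one should restrict to rational $r$ and invoke the left-continuity of $r\mapsto\mu(B(x,r))$ (and of $r\mapsto\mu(B(x,2r))$) to recover the condition for all $r<1/k$. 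Second, the scale in your maximal operator is slightly off: to pass from $\mu(5B_i)$ to $\mu(B_i)$ you iterate the doubling inequality through radii $r_i$, $2r_i$, $4r_i$, all of which must lie below $1/k$, so the cutoff should be $\M_{1/(4k)}$ (or $A_{N,k}$ should be defined with radii up to, say, $4/k$). Both are routine fixes and do not affect the soundness of the approach.
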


\subsubsection*{Sobolev spaces}
Let $(X,d,\mu)$ be a metric measure space, $Y$ a metric space, and $p\ge 1$. If $u:X\to Y$ is a map and $g:X\to [0,\infty]$ is Borel, we say that $g$ is an \emph{upper gradient} of $u$ if
\begin{equation}\label{eq:ug}
d_Y(u(\gamma(b),\gamma(a))\le \int_\gamma g
\end{equation}
for every curve $\gamma\colon [a,b]\to X$. Recall that the \emph{line integral} of $g$  over $\gamma$ is defined as
\[
\int_\gamma g:=\int_0^{\ell(\gamma)}g(\gamma_s(t))\ud t
\]
if $\gamma$ is rectifiable and $\infty$ otherwise. Suppose $Y=V$ is a separable Banach space. A $\mu$-measurable map $u\in L^p(X;V)$ is called $p$-Newtonian if it has an upper gradient $g\in L^p(X)$. The Newtonian seminorm is
\begin{align}\label{eq:sobnorm}
\|u\|_{1,p}=\left(\|u\|_{L^p(X;V)}^p+\underset{g}{\inf}\|g\|_{L^p(X)}^p\right)^{1/p},
\end{align}
where the infimum is taken over all upper gradients $g$ of $u$. The Newtonian space $\Nem pXV$  is the vector space of equivalence classes of $p$-Newtonian maps, where two maps $u,v:X\to V$ are equivalent if $\|u-v\|_{1,p}=0$. The quantity (\ref{eq:sobnorm}) defines a norm on $\Nem pXV$ and $(\Nem pXV,\|\cdot\|_{1,p})$ is a Banach space.

We say that a $\mu$-measurable map $u:X\to V$ is locally $p$-Newtonian if every point $x\in X$ has a neighbourhood $U$ such that
\[
u|_U\in \Nem pUV,
\]
and denote the vector space of locally $p$-Newtonian maps by $\Neml pXV$.

\subsubsection*{Minimal upper gradients} Let $\Gamma$ be a family of curves in $X$. We define the $p$-modulus of $\Gamma$ as
\[
\Mod_p(\Gamma):=\inf\left\{ \int_X\rho^p\ud\mu:\ \rho\colon X\to [0,\infty] \textrm{ Borel, } \int_\gamma\rho\ge 1\textrm{ for every }\gamma\in \Gamma \right\}.
\]
We say that a Borel function $g\colon X\to [0,\infty]$ is a $p$-weak upper gradient of $u$ if there is a path family $\Gamma_0$ with $\Mod_p(\Gamma_0)=0$ so that (\ref{eq:ug}) holds for all curves $\gamma\notin\Gamma_0$. The infimum in \eqref{eq:sobnorm} need not be attained by upper gradients of $u$ but there is a minimal $p$-weak upper gradient $g_u$ of $u$ so that
\[
\|g_u\|_{L^p(X)}=\inf_g\|g\|_{L^p(X)}.
\]
The minimal $p$-weak upper gradient is unique up to sets of measure zero.

If $Y$ is a complete separable metric space then there is an isometric embedding $\iota:Y\to V$ into a separable Banach space. We may define 
\[
\Neml pXY=\{ u\in \Neml pXV:\ \iota(u(x))\in \iota(Y)\ \textrm{ for $\mu$-almost every }x\in X \}
\]
We remark that maps $u\in \Neml pXY$ have minimal $p$-weak upper gradients that are unique up to equality almost everywhere and do not depend on the embedding.

\subsubsection*{Poincare inequalities} A metric measure space $X=(X,d,\mu)$ supports a \emph{$p$-Poincar\'e inequality} if there are constants $C,\sigma\ge 1$ so that
\begin{equation}
\dashint_B|u-u_B|\ud\mu\le C\diam B\left(\dashint_{\sigma B}g^p\ud\mu\right)^{1/p}
\end{equation}
whenever $u\in L^1_{\loc}(X)$, $g$ is an upper gradient of $u$, and $B$ is a ball in $X$. Doubling metric measure spaces supporting a Poincar\'e inequality enjoy a rich theory.

\begin{theorem}[Morrey embedding]\label{thm:morrey}
	Let $(X,d,\mu)$ be a complete doubling metric measure space, where the measure satisfies (\ref{eq:relvolgrowth}) for $Q\ge 1$, and suppose $X$ supports a $Q$-Poincar\'e inequality. If $p>Q$, there is a constant $C\ge 1$ depending only on the data of $X$ so that for any $u\in \Neml pXY$ and any ball $B\subset X$ we have
	\begin{equation*}
	d_Y(u(x),u(y))\le C(\diam B)^{Q/p}d(x,y)^{1-Q/p}\left(\dashint_{\sigma B}g_u^p\ud\mu\right)^{1/p},\quad x,y\in B.
	\end{equation*}
\end{theorem}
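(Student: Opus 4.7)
The plan is to apply the Haj\l asz--Koskela chain-of-balls argument to a suitable real-valued function built from $u$. Fix $y \in B$ (at a point where $u$ is defined, which holds outside a $p$-capacity zero set, hence almost everywhere) and set $\phi(z) := d_Y(u(z), u(y))$; since $d_Y(\cdot, u(y))$ is $1$-Lipschitz, $g_u$ is a $p$-weak upper gradient of $\phi$. As $\phi(y) = 0$, the target inequality for $d_Y(u(x), u(y)) = |\phi(x)|$ reduces to a H\"older-type estimate for $|\phi(x) - \phi(y)|$, which I would obtain by combining the $Q$-Poincar\'e inequality with the relative volume growth \eqref{eq:relvolgrowth}.

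Next I would take a Lebesgue point $x \in B$ of $\phi$, set $r := d(x,y)$ and $r_i := 2^{1-i} r$, and use the chain $B_i := B(x, r_i)$ inside the enclosing ball $B^* := B(x, 2r)$. By Proposition~\ref{prop:hkst} and telescoping,
\begin{equation*}
|\phi(x) - \phi_{B^*}| \leq \sum_{i=0}^\infty |\phi_{B_i} - \phi_{B_{i+1}}|,
\end{equation*}
and on each $B_i$ the $Q$-Poincar\'e inequality together with doubling and H\"older's inequality (using $p \geq Q$) yields
\begin{equation*}
|\phi_{B_i} - \phi_{B_{i+1}}| \leq C r_i \left(\dashint_{\sigma B_i} g_u^p \, \ud\mu\right)^{1/p}.
\end{equation*}
An analogous telescoping from $y$ bounds $|\phi(y) - \phi_{B^*}|$.

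The relative volume bound \eqref{eq:relvolgrowth}, applied to $\sigma B_i \subset \sigma B$, then transfers each local average to the single global ball:
\begin{equation*}
\dashint_{\sigma B_i} g_u^p \, \ud\mu \leq C \left(\frac{\diam B}{r_i}\right)^Q \dashint_{\sigma B} g_u^p \, \ud\mu,
\end{equation*}
so the $i$-th summand is at most $C r_i^{1 - Q/p} (\diam B)^{Q/p} \left(\dashint_{\sigma B} g_u^p\,\ud\mu\right)^{1/p}$. The geometric series $\sum_i r_i^{1 - Q/p}$ converges \emph{precisely} because $p > Q$, producing the desired bound at pairs of Lebesgue points; the H\"older continuity of the resulting representative extends the estimate to all $x, y \in B$.

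The main technical obstacle I anticipate is the book-keeping of dilation constants, so that each $\sigma B_i$ and its counterpart in the telescoping from $y$ is contained in the single ball $\sigma B$ appearing on the right-hand side -- this typically forces replacing the original $\sigma$ from the Poincar\'e inequality with a possibly larger constant depending only on the data of $X$. A minor but necessary check is that $g_u$ remains a $p$-weak upper gradient after post-composition with the $1$-Lipschitz function $d_Y(\cdot, u(y))$, which follows from the curve-family characterization of $p$-weak upper gradients.
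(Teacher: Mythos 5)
The paper states this theorem without proof, as standard background from the literature on Sobolev spaces over doubling Poincar\'e spaces (compare \cite{HKST07,bjo11}), so there is no in-paper argument to compare against; what follows is an assessment of your proposal on its own terms.

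Your chain-of-balls argument is the standard proof of the Morrey embedding and the approach is sound: reducing to the scalar function $\phi(z)=d_Y(u(z),u(y))$, noting $g_u$ is a $p$-weak upper gradient of $\phi$, telescoping $\phi(x)-\phi_{B^*}$ over the dyadic chain $B_i=B(x,2^{1-i}r)$, applying Poincar\'e on consecutive balls, and passing to the single ball $\sigma B$ via \eqref{eq:relvolgrowth} so that the geometric series $\sum_i r_i^{1-Q/p}$ converges (using $p>Q$) are all correct. You also correctly flag the two genuine bookkeeping points: enlargement of $\sigma$, and the fact that post-composition with the $1$-Lipschitz map $d_Y(\cdot,u(y))$ preserves $p$-weak upper gradients. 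One small imprecision worth noting is the sentence ``an analogous telescoping from $y$ bounds $|\phi(y)-\phi_{B^*}|$'': a chain centered at $y$ ends at a ball $B(y,2r)$, not at $B^*=B(x,2r)$, so you need an extra comparison step, for instance bounding $|\phi_{B(x,2r)}-\phi_{B(y,2r)}|$ by comparing both averages with the average over a common ball such as $B(x,4r)\supset B(y,2r)$ and invoking doubling and one more application of the Poincar\'e inequality. This is routine and contributes only another term of the allowed form, but it should be made explicit in a complete write-up.
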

Here the \emph{data} of $X$ refers to $p$, the doubling constant of the measure and the constants in the Poincar\'e inequality.
\subsubsection*{Metric differentials}

Let $E\subset \R^n$ be measurable and $f:E\to X$ be a Lipschitz map into a metric space $X$. By a fundamental result of Kirchheim \cite{kir94}, $f$ admits a \emph{metric differential} $\md_xf$ at almost every point $x\in E$. The metric differential is a seminorm on $\R^n$ and such that
\[
|\md_xf(y-z)-d(f(y),f(z))|=o(d(x,y)+d(x,z))\textrm{ whenever }y,z\in E.
\]
If $\Omega\subset\R^n$ is a domain and $p\ge 1$, a Sobolev map $u\in\Neml p{\Omega}X$ admits an \emph{approximate metric differential} $\apmd_x u$ for almost every $x\in\Omega$; cf. \cite{kar07} and \cite{lyt17''}. If $\Omega\subset M$ is a domain in a Riemannian $n$-manifold, the approximate metric differential of $u\in \Nem p\Omega X$ may be defined, for almost every $x\in \Omega$, as a seminorm on $T_xM$ by 
\begin{equation}\label{eq:apmd}
\apmd_xu:=\apmd_{\psi(x)}(u\circ\psi\inv)\circ\ud\psi(x)
\end{equation}
for any chart $\psi$ around $x$; see the discussion after \cite[Definition 2.1]{fit19} and the references therein for the details.

We formulate the next results for maps defined on domains of manifolds; that is, we assume that $\Omega\subset M$ for a Riemannian $n$-manifold $M$. Note that, although in the references the statements are proved in the Euclidean setting, the definition \eqref{eq:apmd} easily implies the corresponding statements for general Riemannian manifolds.
For the following area formula for Sobolev maps with Lusin's property ($N$), recall that a Borel map $f:(X,\mu)\to (Y,\nu)$ is said to have Lusin's property ($N$) if $\nu(f(E))=0$ whenever $\mu(E)=0$.

\begin{theorem}\cite[Theorems 2.4 and 3.2]{kar07}\label{thm:rectarea}
Let $u\in \Neml p{\Omega}X$ for some $p>n$. Then $u$ has Lusin's property ($N$), and
\begin{equation}
\int_\Omega\varphi(x)J(\apmd_xu)\ud x=\int_X\left(\sum_{x\in u\inv(y)}\varphi(x)\right)\ud\Ha^n(y)
\end{equation}
for any Borel function $\varphi:\Omega\to [0,\infty]$.
\end{theorem}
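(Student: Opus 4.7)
The plan is to reduce to the Lipschitz case handled by Kirchheim via a Lusin-type decomposition of $\Omega$. From $g_u\in L^p_{\loc}(\Omega)$ one has the pointwise inequality $d(u(x),u(y))\le C|x-y|(\M g_u(x)+\M g_u(y))$ for almost every pair $(x,y)$ in a fixed ball of $\Omega$. Setting
\[
E_\lambda:=\{x\in\Omega:\M g_u(x)\le\lambda\},
\]
the map $u$ agrees a.e.\ on $E_\lambda$ with a $2C\lambda$-Lipschitz map $u_\lambda:E_\lambda\to X$. Since $\M g_u<\infty$ a.e., the sets $E_\lambda$ exhaust $\Omega$ up to a measure-zero remainder $N_0$.

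For Lusin's property $(N)$, any null set $N\subset\Omega$ splits as $N=(N\cap N_0)\cup\bigcup_\lambda (N\cap E_\lambda)$. The Lipschitz restrictions $u_\lambda$ send $\Ha^n$-null subsets of $E_\lambda$ to $\Ha^n$-null subsets of $X$. The image of $N_0$ is handled via Morrey's embedding (Theorem \ref{thm:morrey}), which in the regime $p>n$ yields the oscillation bound $\diam u(B)\le Cr\bigl(\dashint_{\sigma B}g_u^p\ud x\bigr)^{1/p}$ on balls $B\subset\Omega$ of radius $r$. A Vitali covering of $N_0$ by small balls, combined with H\"older's inequality (using $1-n/p>0$) and the absolute continuity of $\int g_u^p$, then gives $\Ha^n(u(N_0))=0$.

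For the area formula, Kirchheim's theorem provides a metric differential $\md_x u_\lambda$ for a.e.\ $x\in E_\lambda$. A key identification is that $\md_x u_\lambda=\apmd_x u$ for a.e.\ $x\in E_\lambda$; this follows because at a density point of $E_\lambda$ where $u$ and $u_\lambda$ coincide, the approximate limit defining $\apmd_x u$ reduces to the genuine Kirchheim differential of the Lipschitz truncation. Applying Kirchheim's area formula on $E_\lambda$ yields
\[
\int_{E_\lambda}\varphi(x)J(\apmd_x u)\ud x=\int_X\sum_{x\in u\inv(y)\cap E_\lambda}\varphi(x)\ud\Ha^n(y),
\]
and letting $\lambda\to\infty$, monotone convergence on both sides together with property $(N)$ applied to $N_0$ (to discard the hidden contribution of $u\inv(y)\cap N_0$ on the right) gives the full identity.

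The main obstacle is the identification $\md u_\lambda=\apmd u$ on each Lipschitz piece: one must translate between the Kirchheim differential of the truncation and the approximate metric differential of the original Sobolev map. Once this compatibility is established, the remaining steps amount to standard measure-theoretic bookkeeping and require no further analytic input beyond what Morrey's embedding and Kirchheim's theorem already provide.
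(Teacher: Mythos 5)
The paper does not prove Theorem~\ref{thm:rectarea}; it is cited verbatim from \cite{kar07}, so there is no ``paper's own proof'' to compare against. That said, your sketch is a faithful reconstruction of the standard argument behind the cited result, and the strategy --- Lipschitz truncation via the restricted maximal function, Kirchheim's area formula on each truncation level, Morrey oscillation control on the null remainder, and a monotone-convergence passage to the limit --- is correct. Two small points worth tightening. First, since $p>n$, Morrey's embedding already gives a continuous representative of $u$; you should fix this representative at the outset so that $u(N)$ is unambiguous for arbitrary null sets $N$, and so that the agreement of $u$ with its Lipschitz truncations $u_\lambda$ on $E_\lambda$ can be upgraded from ``a.e.'' to ``outside a single global null set'' (which you then absorb into $N_0$; otherwise the piece $u(N\cap E_\lambda\cap N_\lambda)$, where $N_\lambda$ is the a.e.-agreement exception, is not obviously covered by the Lipschitz argument and would need its own Morrey estimate). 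Second, in the identification $\md_x u_\lambda=\apmd_x u$ it is worth recalling that $J(s)=0$ when the seminorm $s$ is degenerate; Kirchheim's formula already accommodates this, but since $\apmd_x u$ can indeed degenerate on positive-measure sets for a general Sobolev map, it is good to note that the zero-Jacobian locus contributes zero to both sides. With those clarifications the argument goes through.
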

Here the \emph{Jacobian} of a seminorm $s$ on $\R^n$ is defined by
\begin{equation}\label{eq:seminormjac}
J(s)=\alpha(n)\left(\int_{S^{n-1}}s(v)^{-n}\ud \Ha^{n-1}(v)\right)\inv
\end{equation}
Following \cite{lyt17''} we also define the \emph{maximal stretch} of a seminorm $s$ by
\[
I_+(s)=\max\{ s(v)^n:\ |v|=1 \}.
\]
Note that the inequality $J(s)\le I_+^n(s)$ always holds, and $I_+(\apmd_xu)=g_u$ almost everywhere for $u\in \Neml n{\Omega}X$, see \cite[Section 4]{lyt17''}. We say that $s$ is \emph{$K$-quasiconformal} if $I_+^n(s)\le KJ(s)$.  A Sobolev map $u\in \Neml n\Omega X$ is called \emph{infinitesimally $K$-quasiconformal}, if $\apmd_x u$ is $K$-quasiconformal for almost every $x\in\Omega$.

The \emph{(Reshetnyak) energy} and \emph{(parametrized Hausdorff) area} of $u\in \Neml n{\Omega}X$ are defined as
\begin{align*}
E_+^n(u)&:=\int_\Omega I_+^n(\apmd_xu)\ud x\\
Area(u)&:=\int_\Omega J(\apmd_xu)\ud x.
\end{align*}

\subsubsection*{Quasiconformality}
We refer to \cite{hei95,hei96,hei98,hei01} for various definitions of quasiconformality and their relationship in metric spaces, and only mention what is sometimes known as \emph{geometric quasiconformality}. A homeomorphism $u:(X,\mu)\to (Y,\nu)$ between metric measure spaces is said to be \emph{$K$-quasiconformal} with "index" $Q\ge 1$, if
\begin{equation}\label{eq:qc}
\Mod_Q\Gamma\le K\Mod_Qu(\Gamma)
\end{equation}
for any curve family $\Gamma$ in $X$. Without further assumptions on the geometry of the spaces, the modulus condition \eqref{eq:qc} is fundamentally one-sided; cf. the discussion in the introduction. We refer to \cite{wil12b} for an equivalent characterization in terms of \emph{analytic quasiconformality} and note that there is a corresponding result for \emph{monotone maps}. Recall that a map is monotone if the preimage of every point is a connected set. 
\begin{proposition}\cite[Proposition 3.4]{lyt17}
	Let $M$ be $\overline\D$ or $S^2$. If $u\in \Nem 2MX$ is a continuous, surjective, monotone and infinitesimally $K$-quasiconformal map into a complete metric space $X$, then
	\[
	\Mod_2\Gamma\le K\Mod_2u(\Gamma)
	\]
	for any curve family $\Gamma$ in $\overline\D$.
\end{proposition}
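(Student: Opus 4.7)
The plan is to show that any Borel $\rho\colon X\to [0,\infty]$ admissible for $u(\Gamma)$ pulls back to an admissible function for $\Gamma$ on $M$ whose squared $L^2$-norm is controlled by $K\,\|\rho\|_{L^2(X)}^{\,2}$. Given such a $\rho$, I would set
\[
\tilde\rho:=(\rho\circ u)\cdot g_u,
\]
where $g_u$ is the minimal $2$-weak upper gradient of $u$, and verify admissibility of $\tilde\rho$ for $\Gamma$ modulo a $2$-exceptional subfamily.

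For the admissibility, I would invoke Fuglede's lemma together with the fact that for $2$-almost every curve $\gamma$ in $M$ the composition $u\circ\gamma$ is absolutely continuous with metric speed bounded by $g_u(\gamma(t))\,|\gamma'(t)|$ at almost every $t$. Parametrising $\gamma$ by arc length and changing variables one then obtains
\[
1\le \int_{u\circ\gamma}\rho\le \int_\gamma(\rho\circ u)\,g_u=\int_\gamma\tilde\rho
\]
for $2$-almost every $\gamma\in\Gamma$, which suffices since $\Mod_2$ ignores $2$-null subfamilies.

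To close the estimate, I would use the infinitesimal $K$-quasiconformality bound, which for $n=2$ amounts to $g_u^{\,2}\le K\,J(\apmd u)$ almost everywhere, and obtain
\[
\Mod_2(\Gamma)\le \int_M\tilde\rho^{\,2}\,\ud\Ha^2\le K\int_M(\rho\circ u)^2\,J(\apmd u)\,\ud\Ha^2.
\]
The remaining step is to rewrite the right-hand side as $K\int_X\rho^2\,\ud\Ha^2$ by an area formula. Here the assumption that $u$ is monotone from $\overline\D$ or $S^2$ becomes essential: it should force the multiplicity $N(u,y):=\#(u\inv(y))$ to be equal to $1$ for $\Ha^2$-a.e.\ $y\in X$, so that the change-of-variables identity gives exactly the required pullback inequality; taking the infimum over admissible $\rho$ then finishes the proof.

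The hard part is the last area-formula step, since $u\in\Nem 2 M X$ sits at the borderline $p=n=2$ of Theorem~\ref{thm:rectarea}, so that neither Lusin's property ($N$) nor the multiplicity-one statement is automatic. I expect this is precisely where the monotonicity and infinitesimal quasiconformality hypotheses must be used in tandem: the bound $E_+^2(u)<\infty$ rules out excessive local stretching, while monotonicity of a continuous surjective map from $\overline\D$ or $S^2$ prevents local folding, and together they should force $u$ to be, up to null sets on each side, a measure-preserving bijection of full-measure subsets.
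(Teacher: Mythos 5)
This proposition is stated in the paper without proof; it is cited from Lytchak--Wenger \cite{lyt17}, with the remark that the cited argument for $M=\overline\D$ carries over to $S^2$ with minor modifications. Your skeleton --- pulling back an admissible $\rho$ on $X$ to $\tilde\rho=(\rho\circ u)\,g_u$, verifying admissibility modulo a $\Mod_2$-null family via Fuglede, invoking $g_u^2\le K\,J(\apmd u)$, and then passing to an integral over $X$ by a change of variables --- is exactly the structure of the proof in \cite{lyt17}, and everything up to the change of variables is correct.

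The genuine gap is the one you flag yourself, but your proposed way of closing it is not the right statement and would not become one even if the regularity were better. You suggest that monotonicity should force $\#\,u^{-1}(y)=1$ for $\Ha^2$-a.e.\ $y$ so that Theorem~\ref{thm:rectarea} gives an identity $\int_M(\rho\circ u)^2 J(\apmd u)=\int_X\rho^2\,\ud\Ha^2$. First, Theorem~\ref{thm:rectarea} requires $p>n$ and is simply unavailable for $u\in\Nem 2MX$; at the borderline $p=n=2$ neither Lusin (N) nor the area \emph{identity} holds in general. Second, ``fibers are $\Ha^2$-a.e.\ singletons'' is not what monotonicity gives you directly: a monotone fiber is a continuum, and continua can meet several of the bi-Lipschitz pieces in Kirchheim--Karmanova's decomposition, so one must argue about the images overlapping in a null set rather than about pointwise multiplicity. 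What \cite{lyt17} actually establishes and uses is a one-sided area \emph{inequality} valid for continuous monotone $u\in\Nem 2MX$: for every Borel $h\colon X\to[0,\infty]$ one has $\int_M (h\circ u)\,J(\apmd u)\,\ud\Ha^2\le\int_X h\,\ud\Ha^2$. This is proved by decomposing $M$ (up to a set where $J(\apmd u)=0$ a.e.) into countably many Borel pieces on which $u$ is bi-Lipschitz, applying the classical area formula on each, and then using monotonicity together with the planar/spherical topology to show that these pieces have $\Ha^2$-essentially disjoint images; this last step is a genuine topological argument and is where ``monotone from $\overline\D$ or $S^2$'' enters, not through a pointwise count of preimages. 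With that inequality in hand your computation closes: $\Mod_2\Gamma\le\int_M\tilde\rho^2\le K\int_M(\rho\circ u)^2 J(\apmd u)\le K\int_X\rho^2\,\ud\Ha^2$, and taking the infimum over admissible $\rho$ finishes.
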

We remark that in \cite{lyt17} the result is proven for $M=\overline\D$. The same proof however carries over to $S^2$ with minor modifications.

\subsubsection*{Quadratic isoperimetric inequality and minimal discs} Given a proper metric space~$X$ and a closed Jordan curve $\Gamma$ in $X$, we denote
\[
\Lambda(\Gamma,X):=\{u\in \Nem 2{\overline{\D}}X:\ u \textrm{ spans }\Gamma  \}.
\]
Here, a function $u\in \Nem 2{\overline\D}X$ is said to span $\Gamma$ if $u|_{S^1}$ agrees a.e. with a monotone parametrization of $\Gamma$. We call $u\in \Lambda(\Gamma,X)$ a \emph{minimal disc} spanning~$\Gamma$ if~$u$ has least area among all maps in~$\Lambda(\Gamma,X)$ and is furthermore of minimal Reshetnyak energy~$E^2_+$ among all area minimizers in $\Lambda(\Gamma,X)$. In~\cite{lyt17''} it has been shown that, as soon as $\Lambda(\Gamma,X)\neq \varnothing$, there exists a minimal disc~$u$ spanning $\Gamma$ and every such~$u$ is infinitesimally $\sqrt{2}$-quasiconformal. 

\begin{definition}
	A metric space $X$ is said to satisfy a $(C,l_0)$-quadratic isoperimetric inequality if, for any Lipschitz curve $\gamma:S^1\rightarrow X$ with $\ell(\gamma)<l_0$, there exists $u\in \Nem 2{\overline{\D}}X$ with $u_{|S^1}=\gamma$ and 
	\[
	Area(u)\le C\cdot \ell(\gamma)^2.
	\]
\end{definition}
If $X$ satisfies a quadratic isoperimetric inequality then minimal discs $u:\overline{\D}\rightarrow X$ lie in $N^{1,p}_{loc}(\D,X)$ for some~$p>2$ and, if furthermore~$\Gamma$ satisfies a chord-arc condition, then $u\in N^{1,p}(\overline{\D},X)$.\par 
If a proper geodesic metric space $Z$ is homeomorphic to a $2$-manifold, then it satisfies a $(C,l_0)$-quadratic isoperimetric inequality if and only if every Jordan curve~$\Gamma$ in~$Z$ such that $\ell(\Gamma)<l_0$ bounds a Jordan domain $U\subset Z$ which satisfies
\begin{equation}
\label{eq:qii}
\mathcal{H}^2(U)\leq C\cdot \ell(\Gamma)^2.
\end{equation}
This follows from the proof of~\cite[Theorem 1.4]{lyt17} together with the observation leading to~\cite[Corollary 1.5]{cre19'}. We will use this fact in Section~$6$ to establish the quadratic isoperimetric inequality for $\widehat Z_u$ in Theorem~\ref{cor:plateau}.

\section{A geometric characterization of the Sobolev-to-Lipschitz property}\label{sec:equivalence}

In this section we prove Theorem \ref{thm:sobtolip}.
We assume throughout this section that $X$ is a metric measure space and $p\ge 1$. We will need the following consequence of the Sobolev-to-Lipschitz property for measurable maps with 1 as $p$-weak upper gradients. 
\begin{lemma}\label{lem:truesobtolip}
	Suppose $X$ has the $p$-Sobolev-to-Lipschitz property, and let $V$ be a separable Banach space. Then any measurable function $f:X\to V$ with 1 as $p$-weak upper gradient has a 1-Lipschitz representative.
\end{lemma}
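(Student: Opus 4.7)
The plan is to reduce to the scalar-valued case by duality and handle the scalar case via a truncation-and-cutoff scheme. The core difficulty is that the $p$-Sobolev-to-Lipschitz property, as stated, requires both $L^p$-integrability and scalar values, whereas the lemma's hypothesis provides only measurability and a Banach-space target.

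For the reduction, I would use the separability of $V$ together with Hahn--Banach to produce a countable family $\{v_n^*\}\subset V^*$ of unit-norm functionals such that $\|v\|_V=\sup_n v_n^*(v)$ for all $v\in V$. Each $u_n := v_n^*\circ f : X\to\R$ is then measurable with $1$ as a $p$-weak upper gradient, since $|v_n^*(f(\gamma(b)))-v_n^*(f(\gamma(a)))|\le\|f(\gamma(b))-f(\gamma(a))\|_V$ along curves. Granted the scalar version of the lemma, each $u_n$ admits a $1$-Lipschitz representative $\tilde u_n$ outside a null set $N_n$; on $X\setminus N$, where $N:=\bigcup_n N_n$, one has
\[
\|f(x)-f(y)\|_V = \sup_n(\tilde u_n(x)-\tilde u_n(y)) \le d(x,y).
\]
Since every open ball has positive $\mu$-measure, $X\setminus N$ is dense, so $f|_{X\setminus N}$ extends uniquely by completeness of $V$ to a $1$-Lipschitz map $\tilde f:X\to V$.

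For the scalar case, I would fix $x_0\in X$ and, for $R,k>0$, truncate to $u_R:=\max(-R,\min(u,R))$, which inherits $1$ as a $p$-weak upper gradient, and pick a $(1/k)$-Lipschitz cutoff $\eta_k:X\to[0,1]$ equal to $1$ on $B(x_0,k)$ with $\spt\eta_k\subset B(x_0,2k)$. Then $v_{R,k}:=\eta_k u_R$ lies in $\Ne{p}{X}$ (bounded and boundedly supported) and, by the Leibniz rule for weak upper gradients, has a $p$-weak upper gradient dominated by $\eta_k+R/k\le 1+R/k$ on $\spt\eta_k$. Applying the $p$-Sobolev-to-Lipschitz property to $v_{R,k}/(1+R/k)$ yields a $(1+R/k)$-Lipschitz representative of $v_{R,k}$, which agrees with $u_R$ on $B(x_0,k)$ outside a null set. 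Letting $k\to\infty$ along a countable sequence absorbs the $R/k$ loss and shows that $u_R$ admits a $1$-Lipschitz representative $\tilde u_R$ on all of $X$.

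To remove the truncation I would observe that for $R_1<R_2$ the maps $T_{R_1}(\tilde u_{R_2})$ and $\tilde u_{R_1}$ are both continuous representatives of $T_{R_1}(u)$, hence coincide: two continuous functions agreeing a.e. on a metric measure space with positive-measure balls are equal. Choosing $x_0$ so that $\tilde u_{R_n}(x_0)=u(x_0)$ simultaneously along a countable sequence $R_n\to\infty$ (possible because each identity holds a.e.), the $1$-Lipschitz bound $|\tilde u_{R_n}(x)|\le|u(x_0)|+d(x,x_0)$ together with the compatibility $\tilde u_{R_n}(x)=T_{R_n}(\tilde u_{R_m}(x))$ forces $\tilde u_{R_n}(x)$ to stabilize at every $x$ for large $n$; the pointwise limit $\tilde u$ is $1$-Lipschitz and agrees with $u$ a.e. The main obstacle is precisely the interplay between $R$ and $k$: the cutoff produces a multiplicative error $1+R/k$ in the Lipschitz constant, and only by letting $k\to\infty$ \emph{first}, at fixed $R$, can one restore the sharp Lipschitz bound at each truncation level before iterating in $R$.
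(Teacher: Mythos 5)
Your proof is correct, but it takes a genuinely different route from the paper's in both halves. For the scalar case the paper does not multiply by a cutoff; instead it sets $w_k := (k-\dist(\cdot,B(x_0,k)))_+$ and works with the \emph{minimum} $f_k:=\min\{w_k,f\}$. Because the lattice operation $\min$ preserves the bound on the minimal $p$-weak upper gradient exactly (no Leibniz rule, no multiplicative loss), each $f_k$ already has $1$ as $p$-weak upper gradient and lies in $\Ne pX$, so the Sobolev-to-Lipschitz property gives a $1$-Lipschitz representative of $f_k$ directly; letting $k\to\infty$ once finishes the argument. Your truncation-plus-cutoff scheme is equally valid, but the product $\eta_k u_R$ forces you through the Leibniz rule, the $(1+R/k)$-loss, and a two-parameter limit ($k\to\infty$ at fixed $R$, then $R\to\infty$ with a stabilization argument); that is more machinery for the same conclusion. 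For the reduction from Banach-valued to scalar the paper uses the nonlinear distance functions $f_n:=\dist(x_n,f)$ to a dense countable set $\{x_n\}\subset V$, recovering $\|f(x)-f(y)\|_V=\sup_n|f_n(x)-f_n(y)|$, whereas you use a countable norming family of Hahn--Banach functionals. Both are standard and equivalent in strength; the distance-function route has the minor advantage of making no reference to $V^*$. One small point worth flagging in your write-up: when you invoke the Leibniz rule, you should note explicitly that the bounded function $u_R$ (with bounded weak upper gradient) lies in $N^{1,p}_{\loc}(X)$ and $\eta_k\in\Ne pX$ is compactly supported, so the product rule applies and $v_{R,k}\in\Ne pX$; as stated the hypotheses of the Leibniz rule are not visibly verified.
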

\begin{proof}
	We prove the claim first when $V=\R$. Fix $x_0\in X$ and consider the function
	\[
	w_k:=(k-\dist(\cdot,B(x_0,k))_+
	\]
	for each $k\in \N$. Note that $w_k$ is 1-Lipschitz and $w_k\in\Ne pX$. The functions
	\[
	f_k:=\min\{ w_k,f \}
	\]
	have 1 as $p$-weak upper gradient (see \cite[Proposition 7.1.8]{HKST07}) and $f_k\in\Ne pX$. By the Sobolev-to-Lipschitz property there is a set $N\subset X$ with $\mu(N)=0$ and 1-Lipschitz functions $\bar f_k$ such that of $f_k(x)=\bar f_k(x)$ if $x\notin N$, for each $k\in\N$. Since $f_k\to f$ pointwise everywhere we have, for $x,y\notin N$, that 
	\[
	|f(x)-f(y)|=\lim_{k\to\infty}|\bar f_k(x)-\bar f_k(y)|\le d(x,y).
	\]
	Thus $f$ has a 1-Lipschitz representative.

	Next, let $V$  be a separable Banach space and let $\{x_n\}\subset V$ be a countable dense set. Given $f$ as in the claim, the functions $f_n:=\dist(x_n,f)$ have 1 as $p$-weak upper gradient, and thus there is a null set $E\subset X$ and 1-Lipschitz functions $\bar f_n$ so that $f_n(x)=\bar f_n(x)$ whenever $n\in\N$ and $x\notin E$. For $x,y\notin N$, we have 
	\[
	\|f(x)-f(y)\|_V=\sup_n|f_n(x)-f_n(y)|=\sup_n|\bar f_n(x)-\bar f_n(y)|\le d(x,y).
	\]
	We again conclude that $f$ has a 1-Lipschitz representative.
\end{proof}

\begin{proposition}\label{prop:sobtoClip}
	Let $X$ be infinitesimally doubling and $p$-thick quasiconvex with constant $C$. Then every $u\in N^{1,p}(X)$ satisfying $g_u\leq 1$, has a $C$-Lipschitz representative.
\end{proposition}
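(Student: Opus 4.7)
The strategy is to combine the upper gradient inequality (valid for $p$-almost every curve) with the abundance of quasiconvex curves supplied by $p$-thick quasiconvexity, using Lebesgue points to localize $u$ near two fixed base points. Fix a Borel representative of $u$. By Proposition~\ref{prop:hkst} there is a $\mu$-null set $N \subset X$ such that every $x \in X\setminus N$ is a Lebesgue point of this representative. Let $\Gamma_0$ be the family of curves along which the upper gradient inequality \eqref{eq:ug} fails for $g_u$; by definition of $p$-weak upper gradient, $\Mod_p\Gamma_0 = 0$.

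Now fix $x,y \in X\setminus N$ and $\epsilon > 0$. For $\delta > 0$ define
\[
E_{\delta,\epsilon} := \{a \in B(x,\delta)\setminus N : |u(a)-u(x)|\le \epsilon\},\qquad F_{\delta,\epsilon} := \{b \in B(y,\delta)\setminus N : |u(b)-u(y)|\le \epsilon\}.
\]
Since $x$ is a Lebesgue point of $u$, Chebyshev's inequality gives $\mu(B(x,\delta)\setminus E_{\delta,\epsilon})/\mu(B(x,\delta)) \to 0$ as $\delta \to 0$, and similarly for $F_{\delta,\epsilon}$; hence both sets have positive measure for all sufficiently small $\delta$. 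By $p$-thick quasiconvexity with constant $C$ we then have $\Mod_p\Gamma(E_{\delta,\epsilon},F_{\delta,\epsilon};C) > 0$. Since $\Mod_p\Gamma_0 = 0$, monotonicity of modulus forces the existence of some $\gamma \in \Gamma(E_{\delta,\epsilon},F_{\delta,\epsilon};C)\setminus\Gamma_0$.

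Along such a $\gamma$ the hypothesis $g_u \le 1$ and the upper gradient inequality yield
\[
|u(\gamma(1))-u(\gamma(0))| \le \int_\gamma g_u \le \ell(\gamma) \le C\, d(\gamma(0),\gamma(1)) \le C(d(x,y)+2\delta).
\]
Since $\gamma(0)\in E_{\delta,\epsilon}$ and $\gamma(1)\in F_{\delta,\epsilon}$, the triangle inequality gives $|u(x)-u(y)| \le 2\epsilon + C(d(x,y)+2\delta)$. Letting first $\delta \to 0$ and then $\epsilon \to 0$ shows $|u(x)-u(y)| \le C\,d(x,y)$ for all $x,y \in X\setminus N$. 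Since $X\setminus N$ is dense, the restriction of the chosen representative to $X\setminus N$ extends uniquely to a $C$-Lipschitz function on $X$, which is the desired representative.

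\textbf{Main obstacle.} The delicate point is coordinating three objects living on different scales: the null set $N$ of non-Lebesgue points, the zero-modulus curve family $\Gamma_0$ (on which the upper gradient inequality may fail), and the quasiconvex curve family connecting the $\epsilon$-level sets of $u$ around $x$ and $y$. The crux is that shrinking $B(x,\delta)$ and $B(y,\delta)$ to their ``good'' subsets $E_{\delta,\epsilon},F_{\delta,\epsilon}$ still leaves sets of positive measure (by infinitesimal doubling and Lebesgue differentiation), which in turn guarantees a positive-modulus, hence nonempty after removing $\Gamma_0$, family of $C$-quasiconvex curves meeting both localized sets.
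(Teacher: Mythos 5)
Your argument is correct and captures the paper's key mechanism exactly: $p$-thick quasiconvexity produces a positive-modulus family of $C$-quasiconvex curves joining positive-measure sets near $x$ and $y$; since the exceptional family $\Gamma_0$ has zero $p$-modulus, one such curve must avoid it, and the weak upper gradient inequality along that curve yields the Lipschitz bound after passing to the limit. Where you diverge from the paper is in the localization step. The paper applies Lusin's theorem to obtain nested open sets $U_m$ of small measure with $u|_{X\setminus U_m}$ continuous, takes $x,y$ to be density points of $X\setminus U_m$, joins $B(x,\delta)\setminus U_m$ to $B(y,\delta)\setminus U_m$ by a quasiconvex curve, and then lets $\delta\to 0$ using the continuity of $u|_{X\setminus U_m}$ and finally $m\to\infty$. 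You instead use Lebesgue points and Chebyshev, which is a valid alternative but invites two small technical gaps worth closing. First, your Chebyshev step requires the strong Lebesgue differentiation $\dashint_{B(x,r)}|u-u(x)|\,\ud\mu\to 0$, whereas Proposition~\ref{prop:hkst} as stated gives only $\dashint_{B(x,r)}u\,\ud\mu\to u(x)$; the upgrade is standard (apply the weak form to the countable family $\{|u-q|:q\in\Q\}$) but should be noted. Second, the inequality $\int_\gamma g_u\le\ell(\gamma)$ needs $g_u\le 1$ along $\gamma$, not merely $\mu$-a.e.; the clean fix, which the paper uses, is to observe that the constant function $1$ dominates $g_u$ a.e.\ and is therefore itself a $p$-weak upper gradient of $u$, and then define $\Gamma_0$ as the exceptional family for $1$, so that $|u(\gamma(1))-u(\gamma(0))|\le\ell(\gamma)$ holds directly for $\gamma\notin\Gamma_0$. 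Lusin's approach avoids both of these subtleties, which is likely why the paper chose it, but your route works once the two points are made explicit.
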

\begin{proof}
	Let $\Gamma_0$ a path family of zero $p$-modulus such that
	\[
	|u(\gamma(1))-u(\gamma(0))|\le \int_\gamma 1=\ell(\gamma)
	\]
	whenever $\gamma\notin \Gamma_0$.
	By Lusin's theorem there is a decreasing sequence of open sets $U_m\subset X$ such that, for each $m\in\N$, $\mu(U_m)<2^{-m}$ and $u|_{X\setminus U_m}$ is continuous. Fix $m\in \N$, and let $x$ and $y$ be distinct density points of $X\setminus U_m$. Then for every $\delta>0$ we have that
	\[
	\Mod_p(\Gamma(B(x,\delta)\setminus U_m,B(y,\delta)\setminus U_m;C))>0.
	\]
	For $\gamma^\delta\in \Gamma(B(x,\delta)\setminus U_m,B(y,\delta)\setminus U_m;C)\setminus \Gamma_0$ one has
	\[
	|u(\gamma^\delta(1))-u(\gamma^\delta(0))|\leq \ell(\gamma)\leq C\cdot d(\gamma^\delta(0),\gamma^\delta(1))\leq  C d(x,y)+2C\delta.
	\]
	Letting $\delta\rightarrow 0$ and using the continuity of $u|_{X\setminus U_m}$ we obtain
	\begin{equation}
	\label{eqlip}
	|u(x)-u(y)|\leq C d(x,y).
	\end{equation}
	Letting $m \rightarrow \infty$ one sees that \eqref{eqlip} holds for $\mu$-almost every $x,y \in X$ and hence $u$ has a $C$-Lipschitz representative.
\end{proof}

\begin{proof}[Proof of Theorem \ref{thm:sobtolip}]
One implication is directly implied by the preceeding proposition. Indeed, assume $X$ is $p$-thick geodesic and let $f\in \Ne pX$ have $1$ as a $p$-weak upper gradient. By Proposition~\ref{prop:sobtoClip},  $f$ has a $C$-Lipschitz representative for every $C>1$. So the continuous representative of $f$ is $1$-Lipschitz.\par 
Now assume $X$ has the $p$-Sobolev-to-Lipschitz property but is not $p$-thick geodesic. Let $E,F\subset X$ measurable and $C>1$ be such that $\mu(E)\mu(F)>0$ and $\Mod_p\Gamma(E,F;C)=0$. By looking at density points of $E$ and $F$ respectively we may assume without loss of generality that
\[
0<\dist(E,F)=:D\ \ \textrm{ and } \ \ \textnormal{diam}(E),\textnormal{diam}(F)\leq \frac{(C-1)D}{4C}.
\]
There exists a non-negative Borel function $g\in L^p(X)$ for which $\displaystyle \int_\gamma g=\infty$ for every $\gamma\in \Gamma(E,F;C)$. Denote
\[
\Gamma_0:=\left\{ \gamma: \int_\gamma g=\infty \right\},
\]
whence $\Gamma(E,F;C)\subset \Gamma_0$, $\Mod_p(\Gamma_0)=0$ and for $\gamma_1,\gamma_2\notin \Gamma_0$ one has $\gamma_1\cdot \gamma_2 \notin \Gamma_0$.
Define the function $v:X\rightarrow [0,\infty]$ by
\[
v(x)=\lim_{n\to\infty}\inf\left\{ \int_\gamma(1+g/n):\ \gamma\in\Gamma(E,x) \right\}.
\]
The function $v$ is measurable by \cite[Theorem 9.3.1]{HKST07} and satisfies $v|_{E}\equiv 0$. We also have 
\begin{equation}\label{contra}
|v(y)-v(x)|\ge C \cdot D\ge C\cdot\left(d(x,y)-\frac{(C-1)D}{2C}\right)\ge \frac{C+1}{2} d(x,y)
\end{equation}
whenever $y\in F$ and $x\in E$.\par 
For any $\gamma\notin \Gamma_0$  by closedness under composition one has $v(\gamma(0))=\infty$ iff $v(\gamma(1))=\infty$. Furthermore if $v(\gamma(0))\neq \infty$, then
\begin{equation}
|v(\gamma(1))-v(\gamma(0))|\leq \ell(\gamma)=\int_\gamma 1
\end{equation}
So $g \equiv 1$ is a $p$-weak upper gradient of $v$. 

If we had that $v \in N^{1,p}(X)$, the Sobolev-to-Lipschitz property and \eqref{contra} would lead to a contradiction. A simple cut off argument remains to complete the proof.\\
Define $w:X\rightarrow [0,\infty]$ by \[w(x):=\left((C+2)D-d(E,x)\right)^+.\] Then $w$ has $1$ as a upper gradient, $w\in N^{1,p}(X)$ is compactly supported and for $x\in E$, $y \in F$ one has
\begin{equation}
w(y)=(C+2)D-d(E,y)\geq CD\ge \frac{C+1}{2} d(x,y).
\end{equation} 
Set $u:=\min\{v,w\}$. Then $u\in N^{1,p}(X)$ and $g_u\leq 1$. By the Sobolev-to-Lipschitz property $u$ has a $1$-Lipschitz representative. But for every $x\in E$, $y\in F$ \[ |u(x)-u(y)|=u(y)\ge \frac{C+1}{2} d(x,y).\]As $\mu(E)\mu(F)>0$ these two observations lead to a contradiction.
\end{proof}
\begin{example}
\label{ex:cuspdomain}
For $p\in [1,\infty)$ let $Z_p:=\{(x,y)\in \R^2:\ |y|\leq |x|^p\}$ be endowed with the intrinsic length metric and Lebesgue measure. Then $Z$ is $q$-thick geodesic if and only if $q>p+1$. This follows by Example~$1$ in \cite{dur12}. Since $Z_p$ is doubling, it also has the $q$-Sobolev-to-Lipschitz property if and only if $q>p+1$.
\end{example}

\section{Volume rigidity}\label{sec:volrig}

We prove Proposition \ref{prop:volrig} using the fact that volume preserving 1-Lipschitz maps between rectifiable spaces are essentially length preserving. Recall that we say a map $f:X\to Y$ between $n$-rectifiable spaces is volume preserving, if $f_*\Ha^n_X=\Ha^n_Y$.

\begin{proposition}\label{thm:volrig}
	Let $f:X\to Y$ volume preserving $1$-Lipschitz map between $n$-rectifiable metric spaces.  There exists a Borel set $N\subset X$ with $\Ha^n(N)=0$ so that for every absolutely continuous curve $\gamma$ in $X$ with $|\gamma\inv N|=0$ we have
	\[
	\ell(\gamma)=\ell(f\circ\gamma).
	\]
\end{proposition}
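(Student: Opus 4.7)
The plan is to reduce the statement to a pointwise equality of metric differentials on charts and then transfer this to curve lengths via the chain rule for metric derivatives.

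First I would use the $n$-rectifiability of $X$ to decompose $X = N_0 \cup \bigcup_{i=1}^{\infty} X_i$, where $\Ha^n(N_0) = 0$ and each $X_i = \varphi_i(A_i)$ with $\varphi_i\colon A_i \to X_i$ a bi-Lipschitz map from a Borel set $A_i \subset \R^n$. By Kirchheim's theorem $\md_x\varphi_i$ exists at a.e.\ $x \in A_i$, and since $\varphi_i\inv$ is Lipschitz, $\md_x\varphi_i$ is a genuine norm on $\R^n$. Because $f$ is $1$-Lipschitz, $f \circ \varphi_i$ is Lipschitz with metric differential satisfying
\[
\md_x(f \circ \varphi_i)(v) \le \md_x\varphi_i(v), \quad v \in \R^n,
\]
for a.e.\ $x \in A_i$.

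Next I would upgrade this domination to equality using volume preservation. The hypothesis $f_*\Ha^n_X = \Ha^n_Y$ combined with $1$-Lipschitzness yields $\Ha^n(f(E)) = \Ha^n(E)$ for every Borel $E \subset X$: the inequality $\le$ is immediate, while $E \subset f\inv(f(E))$ gives the reverse. Kirchheim's area formula applied to $\varphi_i$ (injective) and to $f \circ \varphi_i$ on Borel $E' \subset A_i$ then gives
\[
\int_{E'} J(\md_x\varphi_i)\, dx = \Ha^n(\varphi_i(E')) = \Ha^n(f(\varphi_i(E'))) \le \int_{E'} J(\md_x(f \circ \varphi_i))\, dx,
\]
so $J(\md_x\varphi_i) \le J(\md_x(f \circ \varphi_i))$ a.e.; combined with the reverse monotonicity of $J$ in the seminorm, this forces $J(\md_x\varphi_i) = J(\md_x(f \circ \varphi_i)) > 0$ a.e. Using the integral representation \eqref{eq:seminormjac}, one then checks the elementary fact that if $s' \le s$ are norms on $\R^n$ with $J(s') = J(s) > 0$, then $s' = s$: strict inequality anywhere would, by continuity and homogeneity, force $\int_{S^{n-1}} s^{-n}\, d\Ha^{n-1} < \int_{S^{n-1}} (s')^{-n}\, d\Ha^{n-1}$. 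Hence there is a full-measure $G_i \subset A_i$ on which $\md_x(f \circ \varphi_i) = \md_x\varphi_i$.

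Finally I would set $N := N_0 \cup \bigcup_i \varphi_i(A_i \setminus G_i)$, which is a Borel $\Ha^n$-null subset of $X$ (Borel because each $\varphi_i$ is a Borel isomorphism onto $X_i$, and null because $\varphi_i$ is Lipschitz). Given an absolutely continuous $\gamma\colon [a,b] \to X$ with $|\gamma\inv(N)| = 0$, the sets $E_i := \gamma\inv(X_i \setminus \varphi_i(A_i \setminus G_i))$ cover $[a,b]$ up to a Lebesgue-null set. On each $E_i$, the map $\eta_i := \varphi_i\inv \circ \gamma|_{E_i}$ is Lipschitz (since $\varphi_i\inv$ is Lipschitz on $X_i$) and extends by McShane to a Lipschitz $\tilde\eta_i\colon [a,b] \to \R^n$. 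At a.e.\ density point $t \in E_i$, the chain rule for metric differentials yields
\[
|\gamma'_t| = \md_{\eta_i(t)}\varphi_i(\tilde\eta_i'(t)), \qquad |(f \circ \gamma)'_t| = \md_{\eta_i(t)}(f \circ \varphi_i)(\tilde\eta_i'(t)),
\]
and since $\eta_i(t) \in G_i$ these values coincide. Integrating over $[a,b]$ proves $\ell(\gamma) = \ell(f \circ \gamma)$. The main obstacle is the middle step, especially rigorously deducing $s' = s$ from equality of Jacobians via \eqref{eq:seminormjac}; a secondary technical point is to verify that the McShane extension $\tilde\eta_i$ is classically differentiable along $E_i$ at enough density points to legitimately invoke the chain rule pointwise.
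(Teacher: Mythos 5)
Your argument follows the paper's proof essentially step for step: same bi-Lipschitz chart decomposition, same domination $\md_x(f\circ\varphi_i)\le\md_x\varphi_i$ from $1$-Lipschitzness, same use of Kirchheim's area formula plus volume preservation to force equality of Jacobians and hence of metric differentials, and the same chain-rule transfer to curve lengths after restricting $\gamma$ to the preimages of the good charts. The two points you flag are handled in the paper by first reducing to Lipschitz (constant-speed) parametrizations and by fixing Lipschitz extensions $\bar g_i,\bar f_i\colon\R^n\to\ell^\infty$ of $g_i$ and $f\circ g_i$, so that the metric differential computation along the extended curve $\gamma_i$ can be carried out at density points of $K_i$ where $\gamma_i'(t)$, $|\dot\gamma_t|$ and $|(f\circ\gamma)'_t|$ all exist.
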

\begin{proof}
	Since $X$ is $n$-rectifiable, there are bounded Borel sets $E_i\subset \R^n$, and bi-Lipschitz maps $g_i:E_i\to  X$ so that the sets $g_i(E_i)\subset X$ are pairwise disjoint, and $\mathcal{H}^n(E)=0$, where
	\[
	E:=X\setminus \bigcup_i g_i(E_i).
	\]
	For each $i$, fix Lipschitz extensions $\bar g_i:\R^n\to l^\infty$ and $\bar f_i:\R^n\to l^\infty$ of $g_i$ and $f\circ g_i$, respectively (here we embed $X$ and $Y$ isometrically into $l^\infty$).
	
	\begin{lemma}\label{lem:equal}
		Let $i\in\N$. For almost every $x\in E_i$, we have
		\begin{equation}\label{eq:equal}
		\md_x \bar g_i =\md_x \bar f_i.
		\end{equation}
	\end{lemma}
	\begin{proof}[Proof of Lemma \ref{lem:equal}]
		Since $f$ is 1-Lipschitz we have, at almost every point $x\in E_i$ where both $\md_x \bar g_i$ and $\md_x\bar f_i$ exist, the inequality $\md_x \bar f_i\le \md_x \bar g_i$, which in particular implies $J(\md_x\bar f_i)\le J(\md_x\bar g_i)$ for $\leb^n$-almost every $x\in E_i$. By the area formula \cite[Corollary 8]{kir94} and the fact that $f$ is volume preserving, it follows that
		\begin{align*}
		&\int_{E_i}J(\md_x\bar g_i)\ud\leb^n(x)=\Ha^n(g_i(E_i))=\Ha^n(f(g_i(E_i)))\\
		=&\int_{E_i}J(\md_x\bar f_i)\ud\leb^n(x);
		\end{align*}
		cf. Theorem \ref{thm:rectarea}. Consequently $J(\md_x\bar g_i)= J(\md_x\bar f_i)$ for $\leb^n$-almost every $x\in E_i$. Thus $\md_x\bar g_i=\md_x\bar f_i$ for $\leb^n$-almost every $x\in E_i$.
	\end{proof}
	
	For each $i\in\N$, let $U_i$ denote the set of those density points $x$ of $E_i$ for which $\md_x \bar g_i=\md_x\bar f_i$. Lemma \ref{lem:equal} implies that $\leb^n(E_i\setminus U_i)=0$. Set 
	\[
	N:=E\cup\bigcup_ig_i(E_i\setminus U_i),
	\]
	whence $\mathcal{H}^n(N)=0$.
	
	Let $\gamma:[0,1]\to X$ be a Lipschitz path.  For each $i\in\N$, denote $K_i=\gamma\inv(g_i(U_i))\subset [0,1]$, and suppose $\gamma_i:[0,1]\to \R^n$ is a Lipschitz extension of the "curve fragment" $g_i\inv\circ\gamma:K_i\to\R^n$.

	Let $t\in K_i$ be a density point of $K_i$, for which $|\dot\gamma_t|$, $|(f\circ\gamma)_t'|$ and $\gamma_i'(t)$ exist, and $\gamma(t)\notin N$. Then $\gamma_i(t)\in U_i$ and we have
	\begin{align*}
	\md_{\gamma_i(t)}\bar g_i(\gamma_i'(t))=&\lim_{h\to 0}\frac{\|\bar g_i(\gamma_i(t)+h\gamma_i'(t))-\bar g_i(\gamma_i(t))\|_{l^\infty}}{h}\\
	=&\lim_{h\to 0}\frac{\|\bar g_i(\gamma_i(t+h))-\bar g_i(\gamma_i(t))\|_{l^\infty}+o(h)}{h}\\
	=&\lim_{\stackrel{h\to 0}{t+h\in K_i}}\frac{\|\bar g_i(\gamma_i(t+h)-\bar g_i(\gamma_i(t))\|_{l^\infty}}{h}\\
	=&\lim_{\stackrel{h\to 0}{t+h\in K_i}}\frac{d(\gamma(t+h),\gamma(t))}{h}=|\dot\gamma_t|.
	\end{align*}
	The same argument with $\bar f_i$ in place of $\bar g_i$ yields
	\[
	\md_{\gamma_i(t)}\bar f_i(\gamma_i'(t))=|(f\circ\gamma)'_t|.
	\]
	Since $\gamma_i(t)\in U_i$, we have
	\begin{align*}
	|\dot\gamma_t|=\md_{\gamma_i(t)}\bar g_i(\gamma_i'(t))=\md_{\gamma_i(t)}\bar f_i(\gamma_i'(t))=|(f\circ\gamma)'_t|.
	\end{align*}
	
	\medskip\noindent Suppose now that $\gamma:[0,1]\to X$ is a absolutely continuous path with $|\gamma\inv(N)|=0$. We may assume that $\gamma$ is constant speed parametrized. Since $|\gamma\inv (N)|=0$ it follows that the union of the sets $K_i$ over $i\in \N$ has full measure in $[0,1]$. Since, for each $i\in\N$, a.e. $K_i$ satisfies the conditions listed above, we may compute
	\begin{align*}
	\ell(\gamma)=\int_0^1|\dot\gamma_t|\ud t = \sum_i\int_{K_i}|\dot\gamma_t|\ud t=\sum_i\int_{K_i}|(f\circ\gamma)'_t|\ud t=\ell(f\circ\gamma).
	\end{align*}
\end{proof}
The proof yields the following corollary.
\begin{corollary}\label{cor:volrig}
	Let $f:X\to Y$ be a surjective 1-Lipschitz map between rectifiable spaces with $\Ha^n(X)=\Ha^n(Y)<\infty$. Then $f$ is volume preserving and, in particular, $\ell(f\circ \gamma)=\ell(\gamma)$ for $\infty$-almost every $\gamma$ in $X$.
\end{corollary}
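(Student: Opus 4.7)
The claim naturally splits into two pieces: showing that $f$ is volume preserving, and deducing length preservation along $\infty$-almost every curve. Once the first is established, the second is a direct application of Proposition~\ref{thm:volrig} combined with a standard modulus argument.

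For volume preservation I would use only that $f$ is $1$-Lipschitz, surjective, and $\Ha^n(X) = \Ha^n(Y) < \infty$. The $1$-Lipschitz property gives the measure decrease $\Ha^n(f(A)) \le \Ha^n(A)$ for every $A \subset X$, and surjectivity gives $f(f\inv(B)) = B$ for every $B \subset Y$, whence $\Ha^n(B) \le \Ha^n(f\inv(B))$. Applying this to a Borel set $B$ and its complement in $Y$, disjointness yields
\[
\Ha^n(X) = \Ha^n(f\inv(B)) + \Ha^n(f\inv(Y \setminus B)) \ge \Ha^n(B) + \Ha^n(Y \setminus B) = \Ha^n(Y) = \Ha^n(X),
\]
and finiteness of $\Ha^n(X)$ forces equality throughout. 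Hence $f_*\Ha^n_X = \Ha^n_Y$.

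Now Proposition~\ref{thm:volrig} is applicable and supplies a Borel set $N \subset X$ with $\Ha^n(N) = 0$ such that $\ell(\gamma) = \ell(f \circ \gamma)$ for every absolutely continuous curve $\gamma$ with $|\gamma\inv(N)| = 0$. To promote this to an $\infty$-modulus statement, I would consider the Borel function $\rho := \infty \cdot \chi_N$. Since $\mu(N) = 0$, we have $\|\rho\|_{L^\infty(X,\mu)} = 0$, while $\int_\gamma \rho = \infty$ on any rectifiable curve whose arc-length parametrization meets $N$ in positive measure. Hence the family of such curves has vanishing $\infty$-modulus; the non-rectifiable curves are $\infty$-exceptional for the same trivial reason. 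Replacing any given curve by its constant-speed (and then arc-length) reparametrization does not affect lengths, so the conclusion $\ell(\gamma) = \ell(f \circ \gamma)$ holds for $\infty$-almost every curve $\gamma$ in $X$.

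The only genuine bookkeeping step is ensuring that the curves discarded in Proposition~\ref{thm:volrig} (those failing absolute continuity or meeting $N$ in positive parameter measure) are captured by $\rho$, which is immediate after reparametrization. The argument otherwise uses nothing beyond the Hausdorff measure decrease of $1$-Lipschitz maps and Proposition~\ref{thm:volrig}.
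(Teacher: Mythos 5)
Your proof is correct and follows the route the paper intends: the ``in particular'' in the corollary signals that one first upgrades the hypotheses to volume preservation and then invokes Proposition~\ref{thm:volrig}, exactly as you do. The disjointness/finiteness argument for $f_*\Ha^n_X=\Ha^n_Y$ and the passage to $\infty$-modulus via $\rho=\infty\cdot\chi_N$ (after constant-speed reparametrization) are both sound and standard.
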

We close this section with the proof of Proposition \ref{prop:volrig}, which connects volume rigidity and the Sobolev-to-Lipschitz property.
\begin{proof}[Proof of Proposition \ref{prop:volrig}]
	Since $f\inv$ is quasiconformal, $f\inv\circ\gamma$ is rectifiable for $n$-almost every curve $\gamma$ in $Y$. Proposition \ref{thm:volrig} implies that
	\[
	g_{f\inv}\le 1.
	\]
	Since $Y$ has the $n$-Sobolev-to-Lipschitz property, it follows that $f\inv$ has a 1-Lipschitz representative, cf. Lemma \ref{lem:truesobtolip}. By continuity of $f\inv$ it coincides with this representative.
\end{proof}

The Sobolev-to-Lipschitz property is crucial for the conclusion of Proposition~\ref{prop:volrig}, as the next example shows.
\begin{example}\label{ex:volrigfail}
	Let $Y=(\overline\D,d_w)$, where
	\[ 
	w(x)=1-(1-|x|)\chi_{[0,1]\times \{ 0\}},\quad x\in\overline\D,
	\]
	and the metric $d_w$ is given by, 
	\[
	d_w(x,y)=\underset{\gamma\in\Gamma(x,y)}{\inf} \int_\gamma w,\quad x,y\in\overline\D.
	\]
	The map $\id:\overline\D\to Y$ is a volume preserving 1-Lipschitz homeomorphism and the inverse is quasiconformal, but not Lipschitz continuous.
\end{example}
Indeed, since $[0,1]\times\{0\}$ has zero measure, we have $J(\apmd \id)=1$ almost everywhere, whence the area formula implies that $\Ha^2(\overline\D)=\Ha^2_{d_w}(\overline\D)$. 

\section{Construction of essential pull-back metrics}\label{sec:esspullback}

Let $X$ be a metric measure space and let $\overline\Gamma(X)$ denote the set of Lipschitz curves $[0,1]\to X$. Recall that $p$-almost every curve in $X$ admits a Lipschitz reparametrization. In this section we construct essential pull-back distances by Sobolev maps. The key notion here is the essential infimum of a functional over a path family.

\begin{definition}\label{def:essinf}
Let $p\ge 1$, $F:\overline\Gamma(X)\to [-\infty,\infty]$ be a function and $\Gamma\subset \overline\Gamma(X)$ a path family. Define the \emph{$p$-essential infimum} of $F$ over $\Gamma$ by 
\[
\underset{\Gamma}{\essinf_{p}}F=\underset{\gamma\in \Gamma}{\essinf_{p}}F(\gamma):=\sup_{\Mod_{p}(\Gamma_0)=0}\inf\{ F(\gamma):\ \gamma\in \Gamma\setminus\Gamma_0 \}
\]
with the usual convention $\inf\varnothing=\infty$.
\end{definition}
It is clear that the supremum may be taken over curve families $\Gamma_\rho$ for non-negative Borel functions $\rho\in L^p(X)$, where
\[
\Gamma_\rho:=\left\{ \gamma: \int_\gamma\rho=\infty \right\}.
\]

\begin{remark}\label{rmk:essinf}
We have the following alternative expression for $\essinf_p$:
\begin{align*}
\underset{\Gamma}{\essinf_{p}}F&=\max\{ \lambda>0:\ \Mod_p(\Gamma\cap \Gamma_F(\lambda))=0 \}\\
&=\inf\{ \lambda>0:\ \Mod_p(\Gamma\cap \Gamma_F(\lambda))>0 \}.
\end{align*}
Here
\[
\Gamma_F(\lambda):=\{\gamma\in\overline\Gamma(X): F(\gamma)<\lambda \}.
\]
\end{remark}

Let $Y$ be a complete metric space, $p\ge 1$, and $u\in \Neml pXY$.
Given a path family $\Gamma$ in $X$ we set
\[
ess\ell_{u,p}(\Gamma):=\underset{\gamma\in\Gamma}{\essinf_p}\ell(u\circ\gamma).
\]

\begin{definition}\label{def:pullback}
	Let $u\in \Neml pXY$. Define the \emph{essential pull-back distance}\\ $\displaystyle d'_{u,p}:X\times X\to[0,\infty]$ by
	\[
	d'_{u,p}(x,y):=\lim_{\delta\to 0}ess\ell_{u,p}(\Gamma(\bar B(x,\delta),\bar B(y,\delta)), \quad x,y\in X.
	\]
\end{definition}
In general, the essential pull-back distance may assume both values 0 and $\infty$ for distinct points, and it need not satisfy the triangle inequality. We denote by $d_{u,p}$ the maximal pseudometric not greater than $d_{u,p}'$, given by
\[
d_{u,p}(x,y)=\inf\left\{ \sum_{i=1}^nd_{u,p}'(x_i,x_{i-1}):\ x_0,\ldots,x_n\in X,\ x_0=x,\ x_n=y \right\},\quad x,y\in X.
\]
The maximal pseudometric below $d_{u,p}'$ may also fail to be a finite valued. We give two situations that guarantee finiteness and nice properties of the arising metric space. Firstly, in Section \ref{sec:sobtolip} we consider the simple case $u=\id:X\to X$ and use it to prove Theorem \ref{thm:minmetric}. Secondly, in Proposition \ref{prop:Xu} we prove that, when $X$ supports a Poincar\'e inequality and $u\in \Neml pXY$ for large enough $p$, the distance in Definition \ref{def:pullback} is a finite valued pseudometric.

For both we need the notion of regular curves, whose properties we study next.

\subsection{Regular curves}\label{sec:regcurves}
Throughout this subsection $(X,d,\mu)$ is a metric measure space. We define a metric on $\overline \Gamma(X)$ by setting 
\[
d_\infty(\alpha,\beta):=\sup_{0\le t\le 1}d(\alpha(t),\beta(t))
\]
for any two Lipschitz curves $\alpha,\beta$. By a simple application of the Arzela-Ascoli theorem it follows that $(\overline\Gamma(X),d_\infty)$ is separable.

\begin{definition}
A curve $\gamma\in\overline \Gamma (X)$ is called $(u,p)$-regular if $u\circ\gamma$ is absolutely continuous and
\[
ess\ell_{u,p} B(\gamma,\delta)\le \ell(u\circ\gamma)
\]
for every $\delta>0$.
\end{definition}

\begin{proposition}\label{prop:aeupreg}
	$p$-almost every Lipschitz curve is $(u,p)$-regular.
\end{proposition}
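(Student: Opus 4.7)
The plan is to decompose the family of curves failing $(u,p)$-regularity into countably many families, each of $p$-modulus zero. First, by the standard absolute continuity along $p$-almost every curve property of Newtonian maps, there is an exceptional family $\Gamma_{\mathrm{AC}}$ of $p$-modulus zero outside of which $u\circ\gamma$ is absolutely continuous. This disposes of the AC condition in the definition of $(u,p)$-regularity.

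For the essential length condition, I would use Remark~\ref{rmk:essinf} to reformulate it: a curve $\gamma$ with $u\circ\gamma$ absolutely continuous fails to be $(u,p)$-regular if and only if there exist positive rationals $\delta,\lambda$ with $\ell(u\circ\gamma)<\lambda$ and
\[
\Mod_p\bigl(B(\gamma,\delta)\cap\Gamma_\lambda\bigr)=0,
\]
where $\Gamma_\lambda:=\{\beta\in\overline\Gamma(X):\ell(u\circ\beta)<\lambda\}$ and $B(\gamma,\delta)$ denotes the open ball in $(\overline\Gamma(X),d_\infty)$. The monotonicity of the families in $\delta$ and $\lambda$ is what allows the reduction to rationals. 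Denoting by $A_{\delta,\lambda}$ the set of such $\gamma$ for fixed $(\delta,\lambda)$, the remaining task is to show $\Mod_p(A_{\delta,\lambda})=0$ for every such pair.

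This is where the separability of $(\overline\Gamma(X),d_\infty)$, observed earlier in this subsection, comes in. I would pick a countable subset $\{\beta_j\}_j\subset A_{\delta,\lambda}$ dense in $A_{\delta,\lambda}$. Given $\gamma\in A_{\delta,\lambda}$, choose $j$ with $d_\infty(\gamma,\beta_j)<\delta/2$; the triangle inequality yields the inclusion $B(\beta_j,\delta/2)\subset B(\gamma,\delta)$, and monotonicity of $p$-modulus then forces
\[
\Mod_p\bigl(B(\beta_j,\delta/2)\cap\Gamma_\lambda\bigr)=0.
\]
Since $A_{\delta,\lambda}\subset\bigcup_j\bigl(B(\beta_j,\delta/2)\cap\Gamma_\lambda\bigr)$, countable subadditivity of $\Mod_p$ gives $\Mod_p(A_{\delta,\lambda})=0$. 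Taking the union over all pairs $(\delta,\lambda)$ of positive rationals and adjoining $\Gamma_{\mathrm{AC}}$ then completes the proof.

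The main obstacle is passing from the uncountable quantifier ``for every $\delta>0$'' in the definition of $(u,p)$-regularity to a countable enumeration of exceptional curves. The separability step above resolves it through the slightly unusual covering inclusion $B(\beta_j,\delta/2)\subset B(\gamma,\delta)$, in which the center of the smaller ball, rather than the ``bad point'' $\gamma$ itself, carries the modulus-zero property. No additional assumptions on $X$ (such as doubling or Poincar\'e) are needed at this stage.
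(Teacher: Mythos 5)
Your proof is correct, and it takes a genuinely different and cleaner route than the paper's. The paper also splits the bad curves into $\Gamma_1$ (absolute continuity fails) and $\Gamma_2$ (the essential-length condition fails), and covers $\Gamma_2$ by balls centered at a countable dense family; but the centers are not arbitrary dense curves: for each $\gamma_i$ in a dense subset of $\Gamma_2$ and each pair $(k,r)\in\N\times\Q_+$, the paper passes to a near-minimizer $\gamma_{i,k,r}$ of $\ell(u\circ\cdot)$ on $B(\gamma_i,r)\cap\Gamma_2$ and tracks the quantities $\varepsilon(\delta,\gamma)$ and $\delta(\gamma)$, because the modulus-zero family it wants to cover $\gamma$ by is $B(\cdot,\delta)\cap\Gamma_{u,p}\left(\ell(u\circ\cdot)+\varepsilon(\delta,\cdot)\right)$, whose level depends on the center curve itself. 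Your key simplification is to reformulate failure of regularity via Remark~\ref{rmk:essinf} with a \emph{fixed rational} level $\lambda$ strictly above $\ell(u\circ\gamma)$: then the set $A_{\delta,\lambda}$ is automatically contained in $\Gamma_\lambda$, the covering becomes a plain density argument in $(\overline\Gamma(X),d_\infty)$, and no near-minimizers or $\varepsilon(\delta,\gamma)$ bookkeeping are needed. Both proofs rest on separability of $(\overline\Gamma(X),d_\infty)$ and countable subadditivity of $\Mod_p$; yours trades the paper's analytic bookkeeping for an extra countable index ($\lambda\in\Q_+$), which is a good trade. One small presentational remark: you obtain $\Mod_p\left(B(\beta_j,\delta/2)\cap\Gamma_\lambda\right)=0$ via the inclusion $B(\beta_j,\delta/2)\subset B(\gamma,\delta)$ for the $\gamma$ you started with, but since you chose $\beta_j\in A_{\delta,\lambda}$ you could equally well note directly that $\Mod_p\left(B(\beta_j,\delta)\cap\Gamma_\lambda\right)=0$ and shrink the radius; either way the countable cover $A_{\delta,\lambda}\subset\bigcup_j\left(B(\beta_j,\delta/2)\cap\Gamma_\lambda\right)$ by modulus-zero families is what closes the argument.
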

For the proof we will denote $\Gamma_{u,p}(\lambda)=\{\gamma: \ell(u\circ\gamma)<\lambda\}$.

\begin{proof}[Proof of Proposition \ref{prop:aeupreg}]
Denote by $\Gamma_0$ the set of curves in $\overline{\Gamma}(X)$ which are not $(u,p)$-regular. We may write $\Gamma_0=\Gamma_1\cup\Gamma_2$, where 
\[
\Gamma_1=\{ \gamma: u\circ\gamma\textrm{ not absolutely continuous} \},\quad \Gamma_2=\Gamma_0\setminus\Gamma_1.
\]
By the fact that $u\in\Neml pXY$ we have $\Mod_p(\Gamma_1)=0$. It remains to show that $\Mod_p(\Gamma_2)=0$. For any $\delta>0$ and $\gamma\in \overline\Gamma(X)$, set
\begin{align*}
\varepsilon(\delta,\gamma)&:=ess\ell_{u,p}B(\gamma,\delta)-\ell(u\circ\gamma)\\
\delta(\gamma)&:=\sup\{ \delta>0: \varepsilon(\delta,\gamma)>0 \}.
\end{align*}
Note that $\delta(\gamma)>0$ if and only if $\gamma\in\Gamma_2$. Using Remark \ref{rmk:essinf} we make the following observation which holds for $\delta ,\varepsilon>0$:
\begin{align}\label{eq:observation}
\Mod_p(B(\gamma,\delta)\cap \Gamma_{u,p}(\ell(u\circ\gamma)+\varepsilon))=0\textrm{ implies }\delta\le \delta(\gamma)\textrm{ and }\varepsilon\le \varepsilon(\delta,\gamma).
\end{align}
Moreover, for any $\gamma\in \Gamma_2$ and $\delta>0$, we have
\begin{equation}\label{eq:null}
\Mod_p(B(\gamma,\delta)\cap \Gamma_{u,p}(\ell(u\circ\gamma)+\varepsilon(\delta,\gamma)))=0.
\end{equation}

\bigskip\noindent Let $\{\gamma_i \}_{i\in\N}\subset \Gamma_2$ be a countable dense set. For each $i,k\in\N$ and rational $r>0$ let $\gamma_{i,k,r}\in B(\gamma_i,r)\cap \Gamma_2$ satisfy
\[
\ell(u\circ\gamma_{i,k,r})<\inf\{ \ell(u\circ\beta):\ \beta\in  B(\gamma_i,r)\cap \Gamma_2 \}+1/k.
\]
By (\ref{eq:null}) it suffices to prove that
\begin{equation*}
\Gamma_2\subset \bigcup_{i,k\in\N}\bigcup_{r,\delta\in \Q_+}B(\gamma_{i,k,r},\delta)\cap\Gamma_{u,p}(\ell(u\circ\gamma_{i,k,r})+\varepsilon(\delta,\gamma_{i,k,r})).
\end{equation*}

\bigskip\noindent For any $\gamma\in\Gamma_2$ let $i\in\N$ be such that $d_{\infty}(\gamma,\gamma_i)<\delta(\gamma)/8$. Choose rational numbers $r,\delta \in \Q_+$ such that $d_\infty(\gamma_i,\gamma)<r<\delta(\gamma)/8$ and $\delta(\gamma)/4<\delta<\delta(\gamma)/2$, and a natural number $k\in\N$ so that $1/k<\varepsilon(2\delta,\gamma)$.

\bigskip\noindent We will show that 
\begin{align}\label{eq:final}
\gamma\in B(\gamma_{i,k,r},\delta)\cap \Gamma_{u,p}(\ell(u\circ\gamma_{i,k,r})+\varepsilon(\delta,\gamma_{i,k,r})).
\end{align}
Indeed, since $\gamma_{i,k,r}\in B(\gamma_i,r)\cap\Gamma_2$, the triangle inequality yields
\[
d_{\infty}(\gamma,\gamma_{i,k,r})\le d_{\infty}(\gamma,\gamma_{i})+d_{\infty}(\gamma_i,\gamma_{i,k,r})<2r<\delta(\gamma)/4<\delta.
\]
In particular
\begin{equation}\label{1}
\gamma\in B(\gamma_{i,k,r},\delta)
\end{equation}
and also
\begin{equation}\label{2}
B(\gamma_{i,k,r},\delta)\subset B(\gamma,2\delta)
\end{equation}

To bound the length of $u\circ\gamma$, observe that
\begin{align*}
\ell(u\circ\gamma_{i,k,r})&<\inf\{ \ell(u\circ\beta):\ \beta\in B(\gamma_i,r)\cap\Gamma_2 \}+1/k<\ell(u\circ\gamma)+1/k\\
&<\ell(u\circ\gamma)+\varepsilon(2\delta,\gamma)
\end{align*}
Setting $\varepsilon:=\ell(u\circ\gamma)+1/k-\ell(u\circ\gamma_{i,k,r})>0$ it follows that
\begin{align}\label{3}
\Gamma_{u,p}(\ell(u\circ\gamma_{i,k,r})+\varepsilon)\subset \Gamma_{u,p}(\ell(u\circ\gamma)+\varepsilon(2\delta,\gamma))
\end{align}
Combining (\ref{2}) and (\ref{3}) with (\ref{eq:null}) we obtain 
\[
\Mod_p(B(\gamma_{i,k,r},\delta)\cap \Gamma_{u,p}(\ell(u\circ\gamma_{i,k,r})+\varepsilon))=0,
\]
which, by (\ref{eq:observation}) yields $\varepsilon\le \varepsilon(\delta,\gamma_{i,k,r})$. Thus
\[
\ell(u\circ\gamma)<\ell(u\circ\gamma)+1/k=\ell(u\circ\gamma_{i,k,r})+\varepsilon\le \ell(u\circ\gamma_{i,k,r})+\varepsilon(\delta,\gamma_{i,k,r})
\]
which, together with (\ref{1}), implies (\ref{eq:final}). This completes the proof.
\end{proof}

\bigskip\noindent For the next two results we assume that $u\in \Neml pXY$ is continuous. We record the following straightforward consequence of the definition of $d_{u,p}'$ and the continuity of $u$ as a lemma.
\begin{lemma}\label{lem:elementary}
For each $x,y\in X$ we have
\[
d(u(x),u(y))\le d'_{u,p}(x,y)
\]
\end{lemma}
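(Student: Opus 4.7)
The inequality is a direct consequence of two facts: the length of any curve dominates the distance between its endpoints, and $u$ is continuous. My plan is to prove the inequality with $d'_{u,p}$ replaced by the essential length of curves between small balls, and then pass to the limit.

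First, I would fix $x,y\in X$ and $\varepsilon>0$. By continuity of $u$ at $x$ and at $y$, there exists $\delta_0>0$ such that whenever $\delta<\delta_0$ one has $d(u(z),u(x))<\varepsilon$ for every $z\in \bar B(x,\delta)$ and similarly $d(u(w),u(y))<\varepsilon$ for every $w\in \bar B(y,\delta)$. For any Lipschitz curve $\gamma\in \Gamma(\bar B(x,\delta),\bar B(y,\delta))$, the composition $u\circ\gamma$ is a continuous curve (since $u$ is continuous) whose length satisfies
\[
\ell(u\circ\gamma)\ge d(u(\gamma(0)),u(\gamma(1)))\ge d(u(x),u(y))-2\varepsilon
\]
by the triangle inequality.

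Next, since this pointwise lower bound holds for every curve in $\Gamma(\bar B(x,\delta),\bar B(y,\delta))$, it survives removal of any exceptional family of zero $p$-modulus. In particular, using Remark \ref{rmk:essinf} or the definition of $\essinf_p$ directly, one concludes
\[
ess\ell_{u,p}\bigl(\Gamma(\bar B(x,\delta),\bar B(y,\delta))\bigr)\ge d(u(x),u(y))-2\varepsilon
\]
for every $\delta<\delta_0$. Letting $\delta\to 0$ gives $d'_{u,p}(x,y)\ge d(u(x),u(y))-2\varepsilon$, and then letting $\varepsilon\to 0$ yields the desired inequality.

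There is no real obstacle here; the only subtle point is ensuring that the lower bound applies to the essential (and not just plain) infimum, which is immediate because the inequality holds for \emph{every} curve in the family, not merely for a subfamily of positive modulus. The use of $\bar B(x,\delta),\bar B(y,\delta)$ rather than single points is also inessential in this step, being needed only so that $d'_{u,p}$ is well-defined when curves joining $\{x\}$ and $\{y\}$ form a $p$-null family.
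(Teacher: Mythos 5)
Your proof is correct and is exactly the ``straightforward consequence'' the paper alludes to without writing out: the paper records this lemma with no proof, saying it follows from the definition of $d'_{u,p}$ and the continuity of $u$, and your argument (lower bound each $\ell(u\circ\gamma)$ by the distance between endpoints, use continuity to pass from ball endpoints to $x,y$, note that a lower bound holding for \emph{every} curve survives passage to the $p$-essential infimum, and let $\delta,\varepsilon\to 0$) is precisely that. Your closing remarks correctly identify the only two points worth flagging — why the essential infimum inherits the bound, and why the definition uses balls rather than singletons.
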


For the next proposition, we denote by $\ell_{u,p}(\gamma)$ the length of a curve $\gamma$ with respect to the pseudometric $d_{u,p}$. 
\begin{proposition}\label{prop:upreg}
If $\gamma:[a,b]\to X$ is $(u,p)$-regular and $a\le t\le s\le b$ then $\gamma|_{[t,s]}$ is $(u,p)$-regular and
	\[
	\ell_{u,p}(\gamma)=\ell(u\circ\gamma).
	\]
\end{proposition}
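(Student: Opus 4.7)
The statement splits naturally into two parts: first I would establish that any subcurve $\gamma|_{[t,s]}$ inherits $(u,p)$-regularity from $\gamma$, and then deduce the length identity by a partition argument. The real content is the subcurve regularity; once this is in hand, the length identity is essentially bookkeeping.

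\textbf{Subcurve regularity.} Absolute continuity of $u\circ\gamma|_{[t,s]}$ follows from that of $u\circ\gamma$. To verify the essinf condition, I would fix $\delta>0$ and an arbitrary $p$-null family $\Gamma_0'$ of curves on $[t,s]$, and define its pullback $\Gamma_0:=\{\alpha:\alpha|_{[t,s]}\in\Gamma_0'\}$ on $[a,b]$. This family is again $p$-null, since any admissible $\rho\ge 0$ for $\Gamma_0'$ remains admissible for $\Gamma_0$ (the line integral over $\alpha$ dominates that over $\alpha|_{[t,s]}$). By the regularity of $\gamma$ I can pick $\alpha_n\in B(\gamma,1/n)\setminus\Gamma_0$ with $\ell(u\circ\alpha_n)\le\ell(u\circ\gamma)+1/n$ (also avoiding the null family where $u\circ\alpha$ fails to be absolutely continuous). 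Since $X$ is proper and $u$ continuous, $u$ is uniformly continuous on a compact neighborhood of $\gamma([a,b])$, so $u\circ\alpha_n\to u\circ\gamma$ uniformly, and lower semicontinuity of length gives $\liminf_n\ell(u\circ\alpha_n|_J)\ge\ell(u\circ\gamma|_J)$ for each $J\in\{[a,t],[t,s],[s,b]\}$. Passing to a subsequence along which all three restricted lengths converge, their limits sum to $\ell(u\circ\gamma)=\ell(u\circ\gamma|_{[a,t]})+\ell(u\circ\gamma|_{[t,s]})+\ell(u\circ\gamma|_{[s,b]})$ while each is at least the corresponding summand; this forces equality in every coordinate. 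In particular $\ell(u\circ\alpha_n|_{[t,s]})\to\ell(u\circ\gamma|_{[t,s]})$, so $\beta_n:=\alpha_n|_{[t,s]}\in B(\gamma|_{[t,s]},\delta)\setminus\Gamma_0'$ for large $n$ witnesses $ess\ell_{u,p}B(\gamma|_{[t,s]},\delta)\le\ell(u\circ\gamma|_{[t,s]})$.

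\textbf{Length identity.} The lower bound $\ell(u\circ\gamma)\le\ell_{u,p}(\gamma)$ is free: $(x,y)\mapsto d(u(x),u(y))$ is a pseudometric on $X$ that is bounded above by $d_{u,p}'$ by Lemma~\ref{lem:elementary}, hence also by the maximal pseudometric $d_{u,p}$. For the reverse, fix a partition $a=t_0<\cdots<t_N=b$. Applying the just-established subcurve regularity to each $\gamma|_{[t_{i-1},t_i]}$, together with the inclusion $B(\gamma|_{[t_{i-1},t_i]},\delta)\subset\Gamma(\bar B(\gamma(t_{i-1}),\delta),\bar B(\gamma(t_i),\delta))$ and the anti-monotonicity of $ess\ell_{u,p}$ in the curve family, yields $ess\ell_{u,p}\Gamma(\bar B(\gamma(t_{i-1}),\delta),\bar B(\gamma(t_i),\delta))\le ess\ell_{u,p}B(\gamma|_{[t_{i-1},t_i]},\delta)\le\ell(u\circ\gamma|_{[t_{i-1},t_i]})$. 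Letting $\delta\to 0$ gives $d_{u,p}'(\gamma(t_{i-1}),\gamma(t_i))\le\ell(u\circ\gamma|_{[t_{i-1},t_i]})$; summing, using $d_{u,p}\le d_{u,p}'$, and taking the supremum over partitions completes the proof.

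\textbf{Main obstacle.} The delicate step is the sharpening of the bound in subcurve regularity. The naive restriction-pullback argument only produces the weak estimate $ess\ell_{u,p}B(\gamma|_{[t,s]},\delta)\le\ell(u\circ\gamma)$, which if substituted into the partition argument degenerates to the useless bound $\ell_{u,p}(\gamma)\le N\cdot\ell(u\circ\gamma)$. The sharp bound $\le\ell(u\circ\gamma|_{[t,s]})$ is only reached via the squeeze argument above, in which the simultaneous saturation of all three restricted lengths is forced by the total length saturating at $\ell(u\circ\gamma)$; this is where $(u,p)$-regularity of the ambient curve is used in an essential way.
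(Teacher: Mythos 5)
Your proof is correct and follows essentially the same route as the paper's: the heart of both arguments is that near-optimal curves for $\gamma$ restrict to near-optimal curves for $\gamma|_{[t,s]}$, forced by additivity of length together with lower semicontinuity of length under uniform convergence (which you rightly justify via properness of $X$ and continuity of $u$). The paper packages this as a modulus inclusion $\Gamma_n\subset B(\eta,1/n)\cap\Gamma_{u,p}(\ell(u\circ\eta)+\varepsilon)$ proved by contradiction using the liminf of lengths on the complementary interval, whereas you phrase it as a direct squeeze over an arbitrary null family with explicit subsequence extraction; these are cosmetic reformulations of the same estimate. Your length-identity argument (pseudometric lower bound from Lemma~\ref{lem:elementary} plus maximality of $d_{u,p}$, and partition upper bound from subcurve regularity and anti-monotonicity of $ess\ell_{u,p}$) matches the paper's.
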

\begin{proof}
Denote $\eta:=\gamma|_{[s,t]}$. Let \[
\Gamma_n:=\left\{c|_{[s,t]}\ :\ c \in B\left(\gamma,\frac{1}{n}\right)\cap \Gamma_{u,p}\left(\ell(u\circ\gamma)+\frac{1}{n}\right)\right\}.
\] Since $\gamma$ is $(u,p)$-regular we have
\[
\Mod_p(\Gamma_n)\geq \Mod_p\left(B\left(\gamma,\frac{1}{n}\right)\cap \Gamma_{u,p}\left(\ell(u\circ\gamma)+\frac{1}{n}\right)\right)>0.
\]
We claim that for every $\varepsilon>0$ there exists $n_0\in\N$ so that 
\begin{equation}\label{eq:ineq}
\Gamma_n\subset B\left(\eta,\frac{1}{n}\right)\cap \Gamma_{u,p}(\ell(u\circ\eta)+\varepsilon)
\end{equation}
for all $n\ge n_0$. Indeed, otherwise there exists $\varepsilon_0>0$ and a sequence $$\gamma_{n_k}\in B\left(\gamma,\frac{1}{n_k}\right)\cap \Gamma_{u,p}\left(\ell(u\circ\gamma)+\frac{1}{n_k}\right)$$ so that
\[
\ell(u\circ\gamma_{n_k}|_{[s,t]})\ge \ell(u\circ\eta)+\varepsilon_0.
\]
Thus 
\begin{align*}
\ell(u \circ \gamma)+\frac{1}{n_k}&\geq \ell(u \circ \gamma_{n_k})=\ell\left( u \circ \gamma_{n_k}|_{[s,t]}\right)+\ell\left(u \circ \gamma_{n_k}|_{[s,t]^c}\right)\\
&\geq \ell\left( u \circ \eta\right)+\varepsilon_0+\ell\left(u \circ \gamma_{n_k}|_{[s,t]^c}\right)
\end{align*}
yielding
\begin{equation}
\label{eqy.2}
\ell\left(u \circ \gamma_{|[s,t]^c}\right)+\frac{1}{n_k}\geq \ell\left(u \circ \gamma_{n_k}|_{[s,t]^c}\right)+\varepsilon_0.
\end{equation}
By taking $\liminf_{k\to\infty}$ in \eqref{eqy.2} we obtain
\begin{equation*}
\ell\left(u \circ \gamma_{|[s,t]^c}\right)\geq \ell\left(u \circ \gamma_{|[s,t]^c}\right)+\varepsilon_0,
\end{equation*}
which is a contradiction. 

Thus (\ref{eq:ineq}) holds true. If $\varepsilon,\delta>0$, let $n\in\N$ be such that (\ref{eq:ineq}) holds and $\delta>1/n$. We have
\[
0<\Mod_p\Gamma_n\le \Mod_p(B(\eta,\delta)\cap\Gamma_{u,p}(\ell(u\circ\eta)+\varepsilon)),
\]
implying $ess\ell_{u,p}B(\eta,\delta)\le \ell(u\circ\eta)+\varepsilon$. Since $\varepsilon>0$ is arbitrary, $\eta$ is $(u,p)$-regular.

\bigskip\noindent To prove the equality in the claim note that, since $\gamma$ is $(u,p)$-regular we have
\[
d_{u,p}(\gamma(t),\gamma(s))\le d_{u,p}'(\gamma(t),\gamma(s))\le \lim_{\delta\to 0}ess\ell_{u,p}B(\gamma|_{[s,t]},\delta)\le \ell(u\circ\gamma|_{[s,t]})
\]
for any $s\le t$. It follows that
\[
\ell_{u,p}(\gamma)\le \ell(u\circ\gamma).
\]

On the other hand Lemma \ref{lem:elementary} implies that
\[
d(u(\gamma(t)),u(\gamma(s)))\le d_{u,p}(\gamma(t),\gamma(s))
\]
whenever $s\le t$, from which the opposite inequality readily follows.
\end{proof}

\subsection{The Sobolev-to-Lipschitz property in thick quasiconvex spaces}\label{sec:sobtolip}

In this subsection let $p\in[1,\infty]$ and $X=(X,d,\mu)$ be $p$-thick quasiconvex with constant $C\ge 1$. Consider the map $u=\id\in \Neml pXX$. We denote by $d_p$ the pseudometric $d_{u,p}$ associated to~$u$.
\begin{lemma}
\label{lem:biLip}
	$d_p$ is a metric on $X$, and satisfies $d\le d_p\le Cd$.
\end{lemma}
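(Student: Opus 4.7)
The plan is to establish the two-sided bound $d \le d_p \le Cd$; positive definiteness of $d_p$, and hence the metric property, then follows from the lower bound since $d$ itself separates points.

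For the lower bound, I would apply Lemma \ref{lem:elementary} to the continuous map $u = \id$, which directly yields $d(x,y) \le d'_{u,p}(x,y)$ for all $x, y \in X$. Since $d$ itself satisfies the triangle inequality, for any chain $x = x_0, x_1, \ldots, x_n = y$ one has
\[
d(x,y) \le \sum_{i=1}^n d(x_{i-1}, x_i) \le \sum_{i=1}^n d'_{u,p}(x_{i-1}, x_i),
\]
and taking the infimum over all such chains gives $d \le d_p$.

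For the upper bound, which is the main content, I would exploit $p$-thick quasiconvexity with constant $C$. Fix $x, y \in X$ and $\delta > 0$, set $E = \bar B(x,\delta)$, $F = \bar B(y,\delta)$. By assumption $\Mod_p\Gamma(E, F; C) > 0$, and every $\gamma \in \Gamma(E, F; C)$ satisfies
\[
\ell(\gamma) \le C \cdot d(\gamma(0), \gamma(1)) \le C(d(x,y) + 2\delta).
\]
For any $\varepsilon > 0$, setting $\lambda := C(d(x,y) + 2\delta) + \varepsilon$, we obtain the inclusion $\Gamma(E, F; C) \subset \Gamma(E,F) \cap \{\gamma : \ell(\gamma) < \lambda\}$, so this intersection has positive $p$-modulus. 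The second characterization of $\essinf_p$ in Remark \ref{rmk:essinf} (applied to $F(\gamma) = \ell(u \circ \gamma) = \ell(\gamma)$) then gives
\[
ess\ell_{u,p}(\Gamma(\bar B(x,\delta), \bar B(y,\delta))) \le \lambda = C(d(x,y) + 2\delta) + \varepsilon.
\]
Letting $\varepsilon \to 0$ and then $\delta \to 0$ yields $d'_{u,p}(x,y) \le C d(x,y)$, and since $d_p \le d'_{u,p}$ by definition, we conclude $d_p \le Cd$.

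I do not expect a real obstacle here: the argument is essentially bookkeeping that combines Lemma \ref{lem:elementary}, the definition of $d_p$ as the largest pseudometric below $d'_{u,p}$, and the curve-extraction afforded by $p$-thick quasiconvexity. The only mild subtlety is the strict inequality in the definition of $\Gamma_F(\lambda)$ appearing in Remark \ref{rmk:essinf}, which is why I introduce the slack $\varepsilon > 0$ above before passing to the limit.
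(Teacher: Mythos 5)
Your argument is correct and follows the paper's proof essentially line for line: $p$-thick quasiconvexity gives $\Mod_p\Gamma(\bar B(x,\delta),\bar B(y,\delta);C)>0$, the length bound on such curves controls $ess\ell_{\id,p}$ and hence $d'_{\id,p}\le Cd$, while Lemma \ref{lem:elementary} gives $d\le d_p$ (the chain argument and the $\varepsilon$-slack for $\Gamma_F(\lambda)$ are just details the paper leaves implicit). No gap.
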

\begin{proof}
Let $x,y\in X$ be distinct and $\delta>0$. Since
\[
\Mod_p\Gamma(B(x,\delta),B(y,\delta);C)>0
\]
it follows that
\[
ess\ell_{\id,p}\Gamma(B(x,\delta),B(y,\delta))\le C(d(x,y)+2\delta),
\]
implying $d_{\id,p}'(x,y)\le Cd(x,y)$. Thus $d_p\le Cd$ and in particular $d_p$ is finite-valued. Lemma \ref{lem:elementary} implies that $d(x,y)\le d_p(x,y)$ for every $x,y\in X$. These estimates together prove the claim.
\end{proof}

We are now ready to prove Theorem \ref{thm:minmetric}. For the proof, we denote by $X_p$ the space $(X,d_p,\mu)$  and by $B_p(x,r)$ balls in $X$ with respect to the metric $d_p$; $\ell_p$ and $\Mod_{X_p,p}$ refer to the length of curves and $p$-modulus taken with respect to $X_p$. 

\begin{proof}[Proof of Theorem \ref{thm:minmetric}]
By Lemma~\ref{lem:biLip} it follows that $X_p$ is an infinitesimally doubling metric measure space and $\Mod_{X_p,p}\Gamma =0$ if and only if $\Mod_p\Gamma =0$. In particular if $\Gamma_*$ denotes the set of curves $\gamma$ in $X$ such that $\ell(\gamma)\neq \ell_p(\gamma)$ then by Propositions~\ref{prop:aeupreg} and \ref{prop:upreg}
\begin{equation}
\label{eq:eqmod}
\Mod_{X_p,p}\Gamma_*= \Mod_p\Gamma_*=0.
\end{equation}
From \eqref{eq:eqmod} and the definition of modulus it follows that $\Mod_{X_p,p}\Gamma = \Mod_p\Gamma$ for every family of curves $\Gamma$ in $X$.\\
By \eqref{eq:eqmod} and Proposition~\ref{prop:sobtoClip} every $f\in  \Ne p{X_p}$ with $g_f\le 1$ has a $C$-Lipschitz representative.
If $x,y\in X$ are distinct and $\delta, \varepsilon >0$ note that the curve family
\[
\Gamma_1=\{ \gamma\in \Gamma(B(x,\delta),B(y,\delta)): \ell(\gamma)\le ess\ell_{\id,p}\Gamma(B(x,\delta),B(y,\delta))+\varepsilon \}
\]
satisfies
\[
\Mod_p\Gamma_1>0
\]
by Remark \ref{rmk:essinf} and the fact that $\varepsilon >0$. Note also that $$\ell(\gamma)\le ess\ell_{\id,p}\Gamma(B(x,\delta),B(y,\delta))+\varepsilon \le d_{\id,p}'(\gamma(1),\gamma(0))+\varepsilon$$ for $\gamma\in \Gamma_1$. Let $f\in \Ne p{X_p}$ satisfy $g_f\le 1$ almost everywhere. Let $\bar f$ be the Lipschitz representative of $f$, and $\Gamma_0$ a curve family with $\Mod_{p}\Gamma_0=0$ and
\[
|f(\gamma(1)-f(\gamma(0))|\le \ell_p(\gamma)=\ell(\gamma)
\]
whenever $\gamma\notin \Gamma_0$. We have
\[
\Mod_p(\Gamma_1\setminus \Gamma_0)>0,
\]
so that there exists $\gamma_\delta\in \Gamma_1\setminus \Gamma_0$. We obtain
\[
|\bar f(\gamma_\delta(1))-\bar f(\gamma_\delta(0))|\le \ell_p(\gamma_\delta)\le d_{\id,p}'(\gamma_\delta(1),\gamma_\delta(0))+\varepsilon.
\]
Letting $\delta\to 0$ yields $|\bar f(x)-\bar f(y)|\le d_{\id,p}'(x,y)+\varepsilon$. Since $x,y\in X$ and $\varepsilon>0$ are arbitrary it follows that $\bar f$ is 1-Lipschitz with respect to $d_p$. In particular $X_p$ has the $p$-Sobolev-to-Lipschitz property and hence Theorem~\ref{thm:sobtolip} implies it is $p$-thick geodesic. We prove the minimality in Proposition~\ref{prop:minimality}.
\end{proof}
The metric $d_p$ is the \emph{minimal} metric above $d$ which has the $p$-Sobolev-to-Lipschitz property. Proposition \ref{prop:minimality} provides a more general statement, from which the minimality discussed in the introduction immediately follows.
\begin{proposition}\label{prop:minimality}
Let $Y$ be a $p$-thick geodesic metric measure space and $f:Y\to X$ be a volume preserving $1$-Lipschitz map. Then the map
\[
f_p:=f:Y\to (X,d_p)
\]
is volume preserving and $1$-Lipschitz.
\end{proposition}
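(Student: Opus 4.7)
The volume-preserving claim is immediate: the metric measure space $(X,d_p,\mu)$ has the same underlying measure $\mu$ as $(X,d,\mu)$, and $f_p=f$ as set-theoretic maps, so $(f_p)_*\mu_Y=f_*\mu_Y=\mu$. The entire content of the proposition is therefore in the 1-Lipschitz assertion, and the plan is to prove the pointwise estimate $d_p(f(y_1),f(y_2))\le d_Y(y_1,y_2)$ for arbitrary $y_1,y_2\in Y$ by bounding $d'_{\id,p}(x_1,x_2)$ from above, where $x_i:=f(y_i)$. Since $d_p\le d'_{\id,p}$ by construction, this suffices.

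The strategy is to use thick geodecity in $Y$ to produce a positive-$p$-modulus family of nearly geodesic curves between small balls around $y_1$ and $y_2$, and then push this family forward to $X$ via $f$. Fix $\varepsilon,\delta>0$. By $p$-thick geodecity of $Y$, the family
\begin{equation*}
\Gamma_\delta:=\Gamma\bigl(B_Y(y_1,\delta),B_Y(y_2,\delta);1+\varepsilon\bigr)
\end{equation*}
satisfies $\Mod_p^Y(\Gamma_\delta)>0$. Since $f$ is $1$-Lipschitz, each $f\circ\gamma$ with $\gamma\in\Gamma_\delta$ is a Lipschitz curve in $X$ joining $B_X(x_1,\delta)$ to $B_X(x_2,\delta)$ of length at most $(1+\varepsilon)(d_Y(y_1,y_2)+2\delta)$.

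The main technical step, and the place where both hypotheses on $f$ are used simultaneously, is the modulus comparison
\begin{equation*}
\Mod_p^X\bigl(f(\Gamma_\delta)\bigr)\ge \Mod_p^Y(\Gamma_\delta)>0.
\end{equation*}
Given any Borel admissible density $\rho$ for $f(\Gamma_\delta)$ in $X$, the composition $\rho\circ f$ is Borel on $Y$ and satisfies $\int_\gamma\rho\circ f\ge \int_{f\circ\gamma}\rho\ge 1$ for every $\gamma\in\Gamma_\delta$; the first inequality reduces to $|(f\circ\gamma)'_t|\le|\gamma'_t|$ and uses only that $f$ is $1$-Lipschitz. Volume preservation then gives $\|\rho\circ f\|_{L^p(Y)}=\|\rho\|_{L^p(X)}$ by the pushforward change-of-variables formula, yielding the desired modulus comparison.

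Combining this with the characterization of the $p$-essential infimum in Remark~\ref{rmk:essinf}, one deduces
\begin{equation*}
ess\ell_{\id,p}\,\Gamma\bigl(B_X(x_1,\delta),B_X(x_2,\delta)\bigr)\le (1+\varepsilon)\bigl(d_Y(y_1,y_2)+2\delta\bigr).
\end{equation*}
Letting $\delta\to 0$ and then $\varepsilon\to 0$ gives $d'_{\id,p}(x_1,x_2)\le d_Y(y_1,y_2)$, which is the required estimate. The single genuine obstacle in this plan is the modulus comparison; the rest is a direct assembly of the pushforward construction with the definitions of $d_p$ and thick geodecity.
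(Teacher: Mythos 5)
Your proof is correct and follows essentially the same route as the paper's: fix two points, push forward a positive-modulus family of nearly geodesic curves in $Y$ via $f$, use the 1-Lipschitz property to bound lengths of images and volume preservation plus 1-Lipschitzness to keep the modulus positive, then invoke Remark~\ref{rmk:essinf} and let $\delta,\varepsilon\to 0$. The only genuine difference is that you spell out the modulus comparison $\Mod_p^Y(\Gamma_\delta)\le\Mod_p^X(f(\Gamma_\delta))$ via the admissibility of $\rho\circ f$ and the change-of-variables identity, whereas the paper asserts it as a known consequence of $f$ being volume preserving and 1-Lipschitz; you also explicitly record the (trivial) volume-preserving part, which the paper silently omits.
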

\begin{proof}
It suffices to show that
\[
d_{\id,p}'(f(x),f(y))\le d(x,y),\quad x,y\in Y.
\]
For any $A>1$, the curve family
\[
\Gamma:=\{ \gamma\in\Gamma(B(x,\delta),B(y,\delta)): \ell(\gamma)\le Ad(\gamma(1),\gamma(0)) \}
\]
has positive $p$-modulus in $Y$. Since $f$ is volume preserving and 1-Lipschitz we have
\[
0<\Mod_{Y,p}\Gamma\le \Mod_{X,p}f\Gamma.
\]

If $\gamma\in \Gamma$ then $f\circ\gamma\in \Gamma(B(f(x),\delta),B(f(y),\delta))$ and, moreover
\[
\ell(f\circ\gamma)\le \ell(\gamma)\le Ad(\gamma(1),\gamma(0))\le Ad(x,y)+2A\delta.
\]
It follows that $ess\ell_{\id,p}\Gamma(B(f(x),\delta),B(f(y),\delta))\le Ad(x,y)+2A\delta$, and thus
\[
d_{\id,p}'(f(x),f(y))\le Ad(x,y)+2A\delta.
\]
Since $A>1$ and $\delta>0$ are arbitrary, the claim follows.
\end{proof}
We have the following immediate corollary.
\begin{corollary}\label{cor:minmetric}
	Assume $X$ is infinitesimally doubling and $p$-thick geodesic. Then $d=d_p$.
\end{corollary}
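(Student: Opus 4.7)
The plan is to observe that this corollary is essentially a direct consequence of what has already been established, most cleanly of Proposition~\ref{prop:minimality} (or alternatively of Lemma~\ref{lem:biLip}).

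First I would apply Proposition~\ref{prop:minimality} with $Y = X$ itself (which is assumed $p$-thick geodesic) and $f = \id_X$, which is trivially a volume preserving $1$-Lipschitz map. The conclusion is that $\id \colon X \to (X, d_p)$ is $1$-Lipschitz, i.e.\ $d_p(x,y) \le d(x,y)$ for all $x,y \in X$. Combined with the reverse inequality $d \le d_p$ coming from Lemma~\ref{lem:elementary} (applied to $u = \id$) — or equivalently from the lower bound in Lemma~\ref{lem:biLip} — this yields $d = d_p$.

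Alternatively, one can argue directly from Lemma~\ref{lem:biLip} without invoking Proposition~\ref{prop:minimality}. By definition, $p$-thick geodecity of $X$ means that $X$ is $p$-thick quasiconvex with constant $C$ for every $C > 1$. Applying Lemma~\ref{lem:biLip} to each such $C$ gives $d \le d_p \le C d$ pointwise on $X \times X$. Letting $C \searrow 1$ forces $d_p \le d$, hence $d = d_p$.

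There is no real obstacle here; the content of the corollary is simply that the construction $d \mapsto d_p$ is idempotent on $p$-thick geodesic spaces, which is built into the minimality statement of Theorem~\ref{thm:minmetric}. The only subtle point worth making explicit is that the quantitative constant $C$ in Lemma~\ref{lem:biLip} is exactly the thick quasiconvexity constant, so that taking it arbitrarily close to $1$ under the thick geodesic hypothesis is what collapses the two metrics.
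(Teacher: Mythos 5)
Your first argument is exactly the paper's: the statement follows immediately from Proposition~\ref{prop:minimality} applied with $Y=X$ and $f=\id_X$, together with the lower bound $d\le d_p$ from Lemma~\ref{lem:biLip}. The alternative via letting $C\searrow 1$ in Lemma~\ref{lem:biLip} is also valid and equally direct, but the paper's intent is the first route.
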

Before considering essential pull-back metrics by non-trivial maps, we prove Proposition~\ref{prop:nages}. The proof is based on the fact that spaces with Poincar\'e inequality are thick quasiconvex, and on the independence of the minimal weak upper gradient on the exponent.
\begin{proposition}[\cite{dur12}]\label{prop:thickquasi}
	Let $p\in [1,\infty]$ and $X$ be a doubling metric measure space satisfying a $p$-Poincar\'e inequality. There is a constant $C\ge 1$ depending only on the data of $X$ so that $X$ is $p$-thick quasiconvex with constant $C$.
\end{proposition}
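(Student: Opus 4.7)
The strategy is by contradiction, using the $p$-Poincar\'e inequality to rule out $\Mod_p\Gamma(E, F; C) = 0$ once $C$ is chosen large in terms of the data; this is essentially the approach of \cite{dur12}. Recall first that doubling plus $p$-Poincar\'e forces quasiconvexity of $X$ with some constant $C_0$ depending only on the data (Cheeger--Semmes; cf.\ \cite[Theorem 8.3.2]{HKST07}). Suppose towards contradiction that $E, F \subset X$ have positive measure with $\Mod_p\Gamma(E, F; C) = 0$. By the Lebesgue density theorem I may pass to subsets concentrated in small balls $B_E \ni x_E$, $B_F \ni x_F$ with $\mu(E)/\mu(B_E), \mu(F)/\mu(B_F) \geq 1/2$, and I set $R := d(B_E, B_F)$.

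For each $\varepsilon > 0$ the assumption yields a non-negative Borel $\rho_\varepsilon \in L^p(X)$ with $\|\rho_\varepsilon\|_p < \varepsilon$ and $\int_\gamma \rho_\varepsilon \geq 1$ for every $\gamma \in \Gamma(E, F; C)$. I introduce a length penalty $\delta \sim 1/(CR)$ and the auxiliary function
\[
u_\varepsilon(x) := \min\Bigl\{1,\; \inf_\gamma \textstyle\int_\gamma (\rho_\varepsilon + \delta)\Bigr\},
\]
where the infimum is taken over rectifiable curves from $E$ to $x$. A standard concatenation argument shows $u_\varepsilon \in \Nel p X$ with $\rho_\varepsilon + \delta$ as a $p$-weak upper gradient. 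Clearly $u_\varepsilon \equiv 0$ on $E$; for $x \in F$, any admissible curve $\gamma$ either satisfies $\ell(\gamma) \leq Cd(\gamma(0), x)$ -- hence $\gamma \in \Gamma(E, F; C)$ and $\int_\gamma \rho_\varepsilon \geq 1$ -- or $\ell(\gamma) > Cd(\gamma(0), x) \geq CR$, forcing $\delta\ell(\gamma) \geq 1$. Either way $u_\varepsilon(x) \geq 1$, so $u_\varepsilon \equiv 1$ on $F$.

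Applying the $p$-Poincar\'e inequality to $u_\varepsilon$, via a chain-of-balls/telescoping argument connecting $B_E$ to $B_F$ so as to preserve the measure ratios under doubling, yields a lower bound of the form
\[
c_0 \leq C_{\mathrm{PI}}\bigl(\|\rho_\varepsilon\|_{p}\mu(B)^{-1/p} + \delta R\bigr),
\]
with $c_0 > 0$ depending only on the data. Letting $\varepsilon \to 0$ reduces this to $c_0 \leq C_{\mathrm{PI}}/C$, which fails once $C > C_{\mathrm{PI}}/c_0$, producing the desired contradiction.

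\textbf{Main obstacle.} The central technical difficulty is to carry the length restriction $\ell(\gamma) \leq Cd(\gamma(0), \gamma(1))$ from the definition of $\Gamma(E, F; C)$ into a genuine $L^p$-estimate on the upper gradient of $u_\varepsilon$. The penalty $\delta$ must be large enough that $u_\varepsilon$ attains the value $1$ on $F$, yet small enough that its $L^p$-contribution is dominated by the Poincar\'e lower bound $c_0$; this balance is precisely what fixes the dependence of $C$ on the data. A further subtlety lies in the telescoping step, needed so that $c_0$ depends only on the data of $X$ and not on the specific geometry of $E$ and $F$ -- a direct one-ball application of the Poincar\'e inequality would produce a constant that deteriorates with the measure ratios $\mu(B_E)/\mu(B)$ as $R \gg r(B_E)$.
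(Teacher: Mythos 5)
The paper does not actually supply a proof of this proposition; it is cited wholesale from \cite{dur12}, so there is no internal proof to compare against. That said, your plan is the standard argument used in that reference and in the surrounding literature, and it is sound in outline: pass to density points, construct the auxiliary capped Newtonian function $u_\varepsilon$ with upper gradient $\rho_\varepsilon+\delta$, note the dichotomy on curves from $E$ to $F$ (either quasiconvex, hence hit by $\rho_\varepsilon$, or long, hence hit by the penalty $\delta$), and derive a contradiction from the Poincar\'e inequality by telescoping from $B_E$ to $B_F$. The identification $\delta\sim 1/(CR)$ with $R=d(B_E,B_F)$ is exactly what makes the penalty contribution of size $\delta R\sim 1/C$ and fixes the dependence of $C$ on the data.

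Two caveats you should keep in mind if you write this out in full. First, your displayed estimate $c_0\leq C_{\mathrm{PI}}\bigl(\|\rho_\varepsilon\|_p\,\mu(B)^{-1/p}+\delta R\bigr)$ is imprecise as stated: the chained Poincar\'e bound produces a sum $\sum_i \diam B_i\bigl(\dashint_{\sigma B_i}g^p\bigr)^{1/p}$ over the telescoping chain, whose $\rho_\varepsilon$-part is controlled by a chain-dependent quantity (finite for fixed $E,F,r,R$, not by a single $\mu(B)^{-1/p}$); the point is simply that this contribution tends to $0$ as $\varepsilon\to 0$ for $E,F$ fixed, while the $\delta$-part telescopes to $\lesssim \delta R=1/C$ with constants depending only on the data, since the geometric sum $\sum_i\diam B_i$ is dominated by its largest term. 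Second, you should treat $p=\infty$ separately: the $\infty$-modulus, the admissibility condition, and the $\infty$-Poincar\'e inequality are formulated slightly differently, and $\|\rho_\varepsilon\|_\infty<\varepsilon$ plays a different role than the $L^p$-smallness for $p<\infty$. Neither issue is fatal, but both need to be addressed to complete the proof.
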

The inverse implication in Proposition~\ref{prop:thickquasi} only holds if $p=\infty$, see \cite{dur12,dur12'}.
\begin{proof}[Proof of Proposition \ref{prop:nages}]
	Assume $X$ is a doubling metric measure space supporting a $p$-Poincar\'e inequality, and $q\ge p\ge 1$. Since $\Mod_q\Gamma=0$ implies $\Mod_p\Gamma=0$, see \cite[Proposition 2.45]{bjo11}, we have
	\[
	d\le d_q\le d_p\le Cd
	\]
	for some constant $C$ depending only on $p$ and the data of $X$.
	
	Fix $x_0\in X$ and consider the function
	\[
	f:X\to \R,\quad x\mapsto d_p(x_0,x).
	\]
Then $f$ is Lipschitz and, by Propositions~\ref{prop:aeupreg} and~\ref{prop:upreg}, it has 1 as a $p$-weak upper gradient. Since $X$ is doubling and supports a $p$-Poincar\'e inequality, \cite[Corollary A.8]{bjo11} implies that the minimal $q$-weak and $p$-weak upper gradients of $f$ agree almost everywhere, and thus 1 is a $q$-weak upper gradient of $f$, i.e.
	\[
	|f(\gamma(1))-f(\gamma(0))|\le \ell(\gamma)\le \ell_q(\gamma)
	\]
	for $q$-almost every curve. The space $X_q$ has the $q$-Sobolev-to-Lipschitz property, see Theorem \ref{thm:sobtolip}, and the 1-Lipschitz representative of $f$ agrees with $f$ everywhere, since $f$ is continuous. By this and the definition of $d_q$ we obtain
	\[
	d_p(x_0,x)\le d_q(x_0,x),\quad x\in X.
	\]
	Since $x_0\in X$ is arbitrary the equality $d_p=d_q$ follows.
	
	For the remaining equality, note that $d\le d_\infty\le d_{ess}\le Cd$. Indeed, $X$ supports an $\infty$-Poincar\'e inequality, whence \cite[Theorem 3.1]{dur12'} implies the rightmost estimate with a constant $C$ depending only on the data of the $\infty$-Poincar\'e inequality. As above, the function
	\[
	g:X\to \R,\quad x\mapsto d_{ess}(x_0,x)
	\]
satisfies
	\[
	|g(\gamma(1))-g(\gamma(0))|\le \ell(\gamma)\le \ell_\infty(\gamma)
	\]
	for $\infty$-almost every $\gamma$, from which the inequality $d_{ess}\le d_\infty$ follows.
\end{proof}

Note that in the proof of Proposition~\ref{prop:thickquasi} we use that in $p$-Poincar\'e spaces the $q$-weak upper gradient does not depend on $q\geq p$. In general such an equality is not true, see \cite{diM15}, and we do not know whether we can weaken the assumptions in Proposition~\ref{prop:nages} from $p$-Poincar\'e inequality to $p$-thick quasiconvexity.

\subsection{Essential pull-back metrics by Sobolev maps}\label{sec:pullbackhomeo}
Throughout this subsection $(X,d,\mu)$ will be a doubling metric measure space satisfying \eqref{eq:relvolgrowth} with $Q\ge 1$ and supporting a weak $(1,Q)$-Poincar\'e inequality, and $Y=(Y,d)$ a proper metric space.

We will use the following observation without further mention. If $p>Q$, and $u:X\to Y$ has a $Q$-weak upper gradient in $L^p_\loc(X)$, then $u$ has a representative  $\bar u\in \Neml pXY$ and the minimal $p$-weak upper gradient of $\bar u$ coincides with the minimal $Q$-weak upper gradient of $u$ almost everywhere. See \cite[Chapter 2.9 and Appendix A]{bjo11} and \cite[Chapter 13.5]{HKST07} for more details.

The next proposition states that higher regularity of a map is enough to guarantee that the essential pull-back distance in Definition \ref{def:pullback} is a finite-valued pseudometric.

\begin{proposition}\label{prop:Xu}
	Let $p>Q$, and suppose $u\in \Neml pXY$.  The pull-back distance $d_u:=d_{u,Q}'$ in Definition \ref{def:pullback} is a pseudometric satisfying
	\begin{equation}\label{eq:duestimate}
d_u(x,y)\le C\diam B^{Q/p}d(x,y)^{1-Q/p}\left(\dashint_{\sigma B}g_u^p\ud\mu\right)^{1/p}
	\end{equation}
	whenever $B\subset X$ is a ball and $x,y\in B$, where the constant $C$ depends only on $p$ and the data of $X$. 
\end{proposition}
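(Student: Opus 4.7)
The plan is to reduce the problem to a Morrey-type inequality for line integrals of $g_u$, and then to derive the triangle inequality by showing that $g_u$ serves as a $Q$-weak upper gradient of the ``distance-to-$x$'' function $v_x := d_u(x, \cdot)$.

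Since $p > Q$ and $u \in \Neml pXY$, we have $u \in \Neml QXY$ and the minimal $p$- and $Q$-weak upper gradients coincide almost everywhere (cf.\ \cite[Appendix~A]{bjo11}); in particular, $g_u \in L^p_{\loc}(X)$ is a $Q$-weak upper gradient of $u$. Hence $\ell(u \circ \gamma) \le \int_\gamma g_u$ for $Q$-almost every curve $\gamma$, and
\[
ess\ell_{u,Q}(\Gamma) \le \underset{\gamma \in \Gamma}{\essinf_{Q}} \int_\gamma g_u
\]
for every curve family $\Gamma$ in $X$.

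\textbf{Morrey-type estimate.} Fix a ball $B$ and distinct $x, y \in B$. In a doubling $Q$-Poincar\'e space, the standard modulus-level Morrey inequality, obtained by combining a telescoping chain of ball averages with the $Q$-Poincar\'e inequality and H\"older's inequality, produces a subfamily $\tilde{\Gamma}_\delta \subset \Gamma(\bar B(x, \delta), \bar B(y, \delta))$ with $\Mod_Q(\tilde{\Gamma}_\delta) > 0$ on which
\[
\int_\gamma g_u \le C \diam B^{Q/p} d(x,y)^{1-Q/p} \left( \dashint_{\sigma B} g_u^p \, \ud\mu \right)^{1/p}
\]
for each $\gamma \in \tilde{\Gamma}_\delta$. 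Combined with the previous display this bounds $ess\ell_{u,Q}\Gamma(\bar B(x,\delta), \bar B(y,\delta))$ by the right-hand side; passing to the limit as $\delta \to 0$ yields \eqref{eq:duestimate}. In particular $d_u$ is everywhere finite.

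\textbf{Triangle inequality.} Symmetry and $d_u(x,x) = 0$ are immediate from the definition. Fix $x \in X$ and set $v_x(z) := d_u(x, z)$, which is finite by the preceding step. The key claim is that $g_u$ is a $Q$-weak upper gradient of $v_x$; combined with Morrey's embedding (Theorem~\ref{thm:morrey}) applied to $v_x \in \Nel QX$, this makes $v_x$ H\"older continuous. The upper gradient property then gives $|v_x(p) - v_x(q)| \le \int_\gamma g_u$ for $Q$-a.e.\ curve $\gamma$ from $p$ to $q$; taking $\essinf_Q$ over $\Gamma(\bar B(p, \delta), \bar B(q, \delta))$, using the continuity of $v_x$, and letting $\delta \to 0$ yields $|v_x(p) - v_x(q)| \le d_u(p, q)$, which rearranges to the triangle inequality $d_u(x, q) \le d_u(x, p) + d_u(p, q)$.

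\textbf{Main obstacle: upper gradient property of $v_x$.} I would establish the stronger bound $v_x(q) - v_x(p) \le \ell(u \circ \gamma)$ for $Q$-almost every curve $\gamma$ from $p$ to $q$; by Proposition~\ref{prop:aeupreg} we may assume $\gamma$ is $(u, Q)$-regular. For $\eta, \delta > 0$, Remark~\ref{rmk:essinf} provides a positive-$Q$-modulus family $A \subset \Gamma(\bar B(x, \delta), \bar B(p, \delta))$ with $\ell(u \circ \alpha) < v_x(p) + \eta$ for $\alpha \in A$. Concatenating each $\alpha \in A$ with a short bridging curve from $\alpha(1)$ to $p$ (of small $u$-length, furnished by the $Q$-thick quasiconvexity of $X$, Proposition~\ref{prop:thickquasi}, combined with the Morrey estimate at scale $\delta$) and then with $\gamma$ produces a positive-$Q$-modulus subfamily of $\Gamma(\bar B(x, \delta), \bar B(q, \delta))$ of $u$-length at most $v_x(p) + \ell(u \circ \gamma) + \eta + o(1)$ as $\delta \to 0$. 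Sending $\eta, \delta \to 0$ yields the claim. The delicate technical point is preserving positivity of $Q$-modulus under concatenation, which I would handle by a measurable selection of bridges combined with a Fubini-type lower bound on the modulus of concatenated path families.
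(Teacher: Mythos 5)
Your high-level plan — bound $d_u$ by a Morrey-type estimate, then derive the triangle inequality from an upper-gradient property of an auxiliary distance function — is indeed what the paper does, but the specific route you take through the upper-gradient step has a genuine gap, and the choice of auxiliary function is what separates the two.

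The gap is in the concatenation argument for showing $v_x(q)-v_x(p)\le\ell(u\circ\gamma)$. You take a positive-modulus family $A\subset\Gamma(\bar B(x,\delta),\bar B(p,\delta))$ of curves with small $u$-length, attach bridges, and then append the \emph{single fixed} curve $\gamma$ to each, hoping to obtain a positive-modulus subfamily of $\Gamma(\bar B(x,\delta),\bar B(q,\delta))$. But every concatenated curve contains $\gamma$ as a subpath, so the whole concatenated family has zero $Q$-modulus: since $\Mod_Q(\{\gamma\})=0$ for a nonconstant rectifiable curve (as $\mu(\gamma[0,1])=0$ for $Q$-a.e.\ $\gamma$), there is $\rho_0\in L^Q(X)$ with $\int_\gamma\rho_0=\infty$, and then $\varepsilon\rho_0$ is admissible for the concatenated family for every $\varepsilon>0$. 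Consequently the essential infimum $ess\ell_{u,Q}\Gamma(\bar B(x,\delta),\bar B(q,\delta))$ is \emph{not} controlled by this family, and the estimate does not follow. No Fubini-type lower bound can rescue a family of zero modulus.

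The paper sidesteps exactly this issue by replacing the full essential distance $v_x=d_u(x,\cdot)$ with the pointwise infimum
\[
f_{x,\delta,\Gamma_\rho}(y):=\inf\bigl\{\ell(u\circ\gamma):\gamma\in\Gamma(\bar B(x,\delta),y)\setminus\Gamma_\rho\bigr\}
\]
for a \emph{fixed} zero-modulus family $\Gamma_\rho=\{\gamma:\int_\gamma\rho=\infty\}$. Because the complement of $\Gamma_\rho$ is closed under concatenation (if $\int_\alpha\rho<\infty$ and $\int_\beta\rho<\infty$ then $\int_{\alpha\beta}\rho<\infty$), the upper-gradient inequality for $f_{x,\delta,\Gamma_\rho}$ is proved curve-by-curve — one simply concatenates a single near-optimal $\beta$ with a single good $\gamma$ and stays in $\Gamma_\rho^c$ — rather than family-by-family, so no modulus of a concatenated family is ever needed. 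Finiteness and measurability of $f_{x,\delta,\Gamma_\rho}$ then follow from maximal-function arguments, Morrey's embedding gives H\"older-continuous representatives, and the triangle inequality is recovered by an inf-inf manipulation of $f_{x,\delta,\cdot}$ and $f_{y,\delta,\cdot}$ on $\bar B(z,\delta)$ followed by a limit in $\delta$ and a sup over $\rho$. If you want to salvage your plan, you should import this device: work with the $f_{x,\delta,\Gamma_\rho}$ rather than with $v_x$ directly.
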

For the proof of Proposition \ref{prop:Xu} we define the following auxiliary functions. Let $x\in X$, $\delta>0$ and $\Gamma_0$ a curve family with $\Mod_Q\Gamma_0=0$. Set $f:=f_{x,\delta,\Gamma_0}:X\to \R$ by
\begin{align*}
f(y)=\inf\{ \ell(u\circ \gamma):\gamma\in \Gamma(\bar B(x,\delta),y)\setminus \Gamma_0 \},\quad y\in X.
\end{align*}
When  $\rho\in L^Q(X)$ is a non-negative Borel function, whence $\Mod_Q(\Gamma_\rho)=0$, we denote $f_{x,\delta,\rho}:=f_{x,\delta,\Gamma_\rho}$.

\begin{lemma}\label{lem:distfinite}
	Let $x,\delta$ and $\rho$ be as above. The function $f=f_{x,\delta,\rho}$ is finite $\mu$-almost everywhere and has a representative in $\Nel pX$ with $p$-weak upper gradient~$g_u$. The continuous representative $\bar f$ of $f$ satisfies
	\begin{equation}\label{eq:morreyest}
	|\bar f(y)-\bar f(z)|\le C(\diam B)^{Q/p}d(y,z)^{1-Q/p}\left(\dashint_{\sigma B}g_u^p\ud\mu\right)^{1/p}.
	\end{equation}
\end{lemma}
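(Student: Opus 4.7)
The plan is to first prove that $g_u$ is a $Q$-weak upper gradient of $f$ via a concatenation argument, and then exploit the fact that $f \equiv 0$ on $\bar B(x,\delta)$ together with Morrey's embedding (Theorem~\ref{thm:morrey}) applied to truncations of $f$ to obtain both the estimate~\eqref{eq:morreyest} and the Sobolev regularity.

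To show that $g_u$ is a $Q$-weak upper gradient of $f$, I consider the $\Mod_Q$-null family comprising three parts: curves along which $u$ fails the pointwise upper gradient inequality with $g_u$; curves with $\int_\gamma g_u = \infty$; and $\Gamma_\rho$. Take a curve $\gamma$ outside this family with endpoints $y_1, y_2$ and suppose $f(y_1) < \infty$. Pick $\eta_n \in \Gamma(\bar B(x,\delta), y_1) \setminus \Gamma_\rho$ with $\ell(u \circ \eta_n) \to f(y_1)$. Since $\int_{\eta_n}\rho$ and $\int_\gamma \rho$ are both finite, the concatenation $\eta_n \cdot \gamma$ lies outside $\Gamma_\rho$ and joins $\bar B(x,\delta)$ to $y_2$, yielding
\[
f(y_2) \le \ell(u \circ \eta_n) + \ell(u \circ \gamma) \longrightarrow f(y_1) + \int_\gamma g_u.
\]
By symmetry $f(y_1)$ and $f(y_2)$ are simultaneously finite or infinite along such $\gamma$, and $|f(y_1) - f(y_2)| \le \int_\gamma g_u$ in the finite case.

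For the Morrey step I first note that $f \equiv 0$ on $\bar B(x,\delta)$, since for any $y$ in this ball the constant curve at $y$ lies in $\Gamma(\bar B(x,\delta), y) \setminus \Gamma_\rho$ and contributes $\ell(u\circ\gamma) = 0$. To handle possibly infinite values of $f$ elsewhere, truncate: $f_N := \min(f, N)$ is bounded and inherits $g_u$ as a $Q$-weak upper gradient from the previous step. Hence $f_N \in \Nel QX$, and by \cite[Corollary~A.8]{bjo11} (the minimal weak upper gradient in a $Q$-Poincar\'e space is independent of the exponent for $p \ge Q$), $g_u$ is also a $p$-weak upper gradient of $f_N$, so $f_N \in \Nel pX$. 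Theorem~\ref{thm:morrey} then provides a continuous representative $\bar f_N$ satisfying~\eqref{eq:morreyest} with a constant independent of~$N$.

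Finally I pass to the monotone limit $\bar f := \lim_N \bar f_N$. Since $\bar f_N \equiv 0$ on $\bar B(x,\delta)$, the estimate~\eqref{eq:morreyest} applied to a ball containing both $\bar B(x,\delta)$ and the point of interest bounds $\bar f_N$ uniformly in $N$ on bounded regions (iterating along a chain of overlapping balls if necessary). Consequently $\bar f$ is finite and continuous everywhere, equals $f$ almost everywhere (whence $f$ is finite a.e.), and inherits~\eqref{eq:morreyest}, placing $\bar f$ in $\Nel pX$ with $g_u$ as $p$-weak upper gradient. The main obstacle I anticipate is Step~1: assembling a single $\Mod_Q$-null exceptional family so that the concatenation $\eta_n \cdot \gamma$ remains outside $\Gamma_\rho$ for every element of the approximating sequence, which is precisely why $\Gamma_\rho$ itself must be baked into the exceptional family rather than imposed only on $\gamma$.
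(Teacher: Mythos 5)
Your proof is correct but takes a genuinely different route from the paper's. The paper first invokes a maximal‑function argument (citing Lemma~4.6 of Eriksson-Bique--Soultanis and \eqref{eq:maximalfunctionbound}) to locate a point $x_0\in\bar B(x,\delta)$ with $\M_B(g+\rho)^Q(x_0)<\infty$ and thereby establishes $f<\infty$ a.e.\ directly; it then runs the concatenation argument with a \emph{genuine} upper gradient $g\in L^p_{\loc}$ of $u$ (so the inequality $\ell(u\circ\gamma)\le\int_\gamma g$ holds along every curve without exceptions) and hands off the remaining conclusions to \cite[Corollary~1.70]{bjo11} and \cite[Theorems~9.2.14, 9.3.4]{HKST07}. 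You instead run the concatenation argument directly with the minimal weak upper gradient $g_u$ at the cost of enlarging the exceptional family, and then leverage the anchoring $f\equiv 0$ on $\bar B(x,\delta)$ together with truncations $f_N=\min(f,N)$ and a uniform-in-$N$ Morrey estimate to extract finiteness, continuity, and \eqref{eq:morreyest} by a monotone limit. Your route is more self-contained (it does not need the maximal-function step), whereas the paper's is shorter once its citations are granted. Two small points to make fully rigorous: you rely on $f$ (hence $f_N$) being $\mu$-measurable so that $f_N\in\Nel QX$ --- the paper handles this implicitly via its citations, and you would want to invoke something like \cite[Theorem~9.3.1]{HKST07} for measurability of such curve-infima; and the claim $f\equiv 0$ on $\bar B(x,\delta)$ via the constant curve requires that constant curves are admitted in $\Gamma(\bar B(x,\delta),y)$ (which is the case under the convention $\int_\gamma\rho=0$ for zero-length curves, so $\Gamma_\rho$ never contains them).
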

\begin{proof}
	Let $g\in L^p_{\loc}(X)$ be a genuine upper gradient of $u$ and let $\varepsilon>0$ be arbitrary. 
	We fix a large ball $B\subset X$ containing $\bar B(x,\delta)$ and note that there exists $x_0\in \bar B(x,\delta)$ for which $\M_B(g+\rho)^Q(x_0)<\infty$, since $(g+\rho)|_B\in L^Q(B)$; cf. \eqref{eq:maximalfunctionbound}. Arguing as in \cite[Lemma 4.6]{teri2} we have that
	\[
	f(y)\le \inf\left\{ \int_\gamma (g+\rho): \gamma\in \Gamma_{x_0y}\setminus \Gamma_\rho \right\}<\infty
	\]
	for almost every $y\in B$. 
	
	Let $\gamma\notin \Gamma_\rho$ be a curve such that $f(\gamma(1)),f(\gamma(0))<\infty$ and $\int_\gamma g<\infty$. We may assume that $|f(\gamma(1))-f(\gamma(0))|=f(\gamma(1))-f(\gamma(0))\ge 0$. If $\beta$ is an element of $\bar \Gamma(B(x,\delta),\gamma(0))\setminus\Gamma_\rho$ such that $\ell(u\circ\beta)<f(\gamma(0))+\varepsilon$ then the concatenation $\gamma\beta$ satisfies $\gamma\beta\in \Gamma(\bar B(x,\delta),\gamma(1))\setminus\Gamma_\rho$. Thus
	\begin{align*}
	|f(\gamma(1))-f(\gamma(0))|\le \ell(u\circ\gamma\beta)-\ell(u\circ\beta)+\varepsilon=\ell(u\circ\gamma)+\varepsilon\le \int_\gamma g+\varepsilon.
	\end{align*}
	It follows that $g\in L^p_\loc(X)$ is a $Q$-weak upper gradient for $f$. By \cite[Corollary 1.70]{bjo11} we have that $f(y)<\infty$ for $Q$-quasievery $y\in B$, and $f\in \Nel QX$; cf. \cite[Theorem 9.3.4]{HKST07}.

Moreover, since $g\in L^p_\loc(X)$, $f$ has a continuous representative $\bar f\in \Nel pX$ which satisfies (\ref{eq:morreyest}), cf. \cite[Theorem 9.2.14]{HKST07}.
\end{proof}

\begin{proof}[Proof of Proposition \ref{prop:Xu}]
To prove the triangle inequality, let $x,y,z\in X$ be distinct. Take $\delta>0$ small, $\rho\in L^Q(X)$ non-negative, and let $E\subset X$ be a set of $Q$-capacity zero such that $f_{x,\delta,\Gamma_\rho}$ and $f_{y,\delta,\Gamma_\rho}$ agree with their continuous representatives outside $E$. Remember that $\alpha\beta\notin\Gamma_\rho$ whenever $\alpha,\beta\notin\Gamma_\rho$. We have that
	\begin{align*}
	d'_{u,Q}(x,z)+d'_{u,Q}(z,y)&\ge \inf_{w\in \bar B(z,\delta)}f_{x,\delta,\Gamma_\rho\cup\Gamma_E}(w)+\inf_{v\in \bar B(z,\delta)}f_{y,\delta,\Gamma_\rho\cup\Gamma_E}(v)\\
	&\ge \inf_{w,v\in \bar B(z,\delta)\setminus E}[f_{x,\delta,\rho}(w)+f_{y,\delta,\rho}(v)].
	\end{align*}
	Together with the estimate \eqref{eq:morreyest} this yields
	\begin{align*}
	d'_{u,Q}(x,z)+d'_{u,Q}(z,y)&\ge \inf_{w\in \bar B(z,\delta)\setminus E}[f_{x,\delta,\rho}(w)+f_{y,\delta,\rho}(w)]-C\delta^{1-Q/p}\\
	&\ge \inf\{ \ell(u\circ\gamma): \gamma\in \Gamma(\bar B(x,\delta),\bar B(y,\delta))\setminus\Gamma_\rho \}- C\delta^{1-Q/p},
	\end{align*}
	where $C$ depends on $u,x$ and $y$ as well as the data. Since $\delta>0$ and $\rho$ are arbitrary it follows that
	\[
	d'_{u,Q}(x,z)+d'_{u,Q}(z,y)\ge d'_{u,Q}(x,y).
	\]
	Moreover, for any $y'\in \bar B(y,\delta)\setminus E$,
	\begin{align*}
	|f_{x,\delta,\rho}(y')|&\le C\diam B^{p/Q}d(x,y')^{1-Q/p}\left(\dashint_{\sigma B}g_u^p\ud\mu\right)^{1/p}\\
	&\le C\diam B^{p/Q}(d(x,y)+\delta)^{1-Q/p}\left(\dashint_{\sigma B}g_u^p\ud\mu\right)^{1/p}
	\end{align*}
	Taking supremum over $\rho$, and letting $\delta$ tend to zero, we obtain \eqref{eq:duestimate}.
\end{proof}

\begin{remark}
A slight variation of the proof of Proposition~\ref{prop:Xu} shows that also in the setting of Theorem~\ref{thm:minmetric} the essential pull-back distance defines a metric. In particular it satisfies the triangle inequality and one does not have to pass to the maximal semimetric below. In this case in the argument instead of the Morrey embedding one applies Proposition~\ref{prop:sobtoClip}.
\end{remark}
\bigskip\noindent Fix a continuous map  $u\in\Neml pXY$, where $p>Q$, and denote by $Y_u$ the set of equivalence classes $[x]$ of points $x\in X$, where $x$ and $y$ are set to be equivalent if $d_{u,Q}'(x,y)=0$. The pseudometric $d_{u,Q}'$ defines a metric $d_u$ on $Y_u$ by
\begin{align*}
d_u([x],[y]):=d_{u,Q}'(x,y),\quad x,y\in X.
\end{align*}
The natural projection map
\begin{align*}
\widehat{P}_u:X\to Y_u,\quad x\mapsto [x]
\end{align*}
is continuous by \eqref{eq:duestimate}. The map $u$ factors as $u=\widehat u\circ \widehat P_u$, where
\[
\widehat u:Y_u\to Y,\quad  [x]\mapsto u(x)
\]
is well-defined and 1-Lipschitz; cf. Lemma \ref{lem:elementary}. 

\begin{lemma}\label{thm:pullbacksobtolip}
Under the given assumptions we obtain the following properties.
\begin{itemize}
\item[(1)] $\widehat P_u\in \Neml pX{Y_u}$ and $g_{\widehat P_u}=g_u$ $\mu$-almost everywhere.
\item[(2)] If $u$ is proper then $Y_u$ is proper and the projection $\widehat P_u:X\to Y_u$ is proper and monotone.
\end{itemize}
\end{lemma}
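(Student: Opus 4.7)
The plan is to split along (1) and (2), exploiting throughout the factorization $u=\widehat u\circ\widehat P_u$ with $\widehat u$ being $1$-Lipschitz, together with the regular curve machinery of Subsection~\ref{sec:regcurves}.

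For (1), the identity $g_{\widehat P_u}=g_u$ is proved as a pair of inequalities. Since $\widehat u$ is $1$-Lipschitz, every upper gradient of $\widehat P_u$ is an upper gradient of $u=\widehat u\circ\widehat P_u$, so $g_u\leq g_{\widehat P_u}$ almost everywhere. For the reverse inequality, Proposition~\ref{prop:aeupreg} yields $(u,Q)$-regularity of $Q$-almost every Lipschitz curve $\gamma$, and Proposition~\ref{prop:upreg} then gives $\ell(\widehat P_u\circ\gamma)=\ell_{u,Q}(\gamma)=\ell(u\circ\gamma)\leq\int_\gamma g_u$; hence $g_u$ is a $Q$-weak upper gradient of $\widehat P_u$. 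Because $X$ supports a $Q$-Poincar\'e inequality and $g_u\in L^p_\loc(X)$, the minimal $p$-weak and $Q$-weak upper gradients of $\widehat P_u$ coincide almost everywhere by \cite[Corollary~A.8]{bjo11}, so $g_u$ is also a $p$-weak upper gradient of $\widehat P_u$. Membership $\widehat P_u\in\Neml pX{Y_u}$ then follows from local boundedness of $\widehat P_u$, which is itself a consequence of the continuity granted by the Morrey-type estimate~\eqref{eq:duestimate} combined with properness of $X$.

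For the properness statements in (2), the factorization does all the work. If $K\subset Y_u$ is compact, then $\widehat u(K)$ is compact in $Y$, $u\inv(\widehat u(K))$ is compact by properness of $u$, and $\widehat P_u\inv(K)$ is closed therein and hence compact; this gives properness of $\widehat P_u$. Applying the same idea to a closed bounded set $B\subset Y_u$, using that $\widehat u(B)$ is bounded in the proper space $Y$ and hence precompact, together with the surjectivity identity $B=\widehat P_u(\widehat P_u\inv(B))$, shows that $Y_u$ is proper.

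The monotonicity of $\widehat P_u$ is the main obstacle, and the plan is to prove it by contradiction via a first-exit modulus argument. Suppose $F:=\widehat P_u\inv([x_0])$ is disconnected; properness of $\widehat P_u$ makes $F$ compact, so one can write $F=F_1\sqcup F_2$ with both $F_i$ nonempty, compact and at positive mutual distance, and choose an open set $V\supset F_1$ with $\overline V\cap F_2=\emptyset$ and $\operatorname{dist}(\partial V,F)>0$. Fix $x_1\in F_1$, $x_2\in F_2$: since $d'_{u,Q}(x_1,x_2)=0$, Remark~\ref{rmk:essinf} implies that for every $\delta,\varepsilon>0$ the family $\Gamma_{\delta,\varepsilon}:=\Gamma(\bar B(x_1,\delta),\bar B(x_2,\delta))\cap\Gamma_{u,Q}(\varepsilon)$ has positive $Q$-modulus. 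For small $\delta$, each $\gamma\in\Gamma_{\delta,\varepsilon}$ has a first exit time $t_0(\gamma)$ from $V$ with $\gamma(t_0(\gamma))\in\partial V$, and properness of $u$ confines $\gamma([0,1])$ to the fixed compact set $u\inv(\bar B(u(x_0),\delta+\varepsilon))$, so exit points lie in a compact subset of $\partial V$. Covering that subset by countably many balls $B(z_i,r)$ and invoking countable subadditivity of $Q$-modulus isolates one subfamily $\Gamma^i_{\delta,\varepsilon}$ of curves exiting in a prescribed $B(z_i,r)$ with positive modulus, and restricting these curves to $[0,t_0(\gamma)]$ and reparametrizing to $[0,1]$ can only increase the modulus, since every admissible density for the restricted family is still admissible for the original (as $\int_\gamma\rho\geq\int_{\gamma|_{[0,t_0]}}\rho$). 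Taking sequences $\delta_n,\varepsilon_n,r_n\to 0$ yields corresponding exit points $z_n\in\partial V$ (in a fixed compact set), and passing to a convergent subsequence $z_n\to z_\infty\in\partial V$ one sees that for every fixed $\delta,\varepsilon>0$ the family $\Gamma(\bar B(x_1,\delta),\bar B(z_\infty,\delta))\cap\Gamma_{u,Q}(\varepsilon)$ contains, for large $n$, the positive-modulus subfamily just constructed, and hence itself has positive $Q$-modulus. Therefore $d'_{u,Q}(x_1,z_\infty)=0$, placing $z_\infty\in F\cap\partial V$, which contradicts $\operatorname{dist}(\partial V,F)>0$.
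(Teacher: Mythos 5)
Your proof is correct. Parts (1) and the properness claims in (2) follow the same route as the paper: split the gradient equality into two inequalities using the factorization $u=\widehat u\circ\widehat P_u$ with $\widehat u$ being $1$-Lipschitz on one side and Propositions~\ref{prop:aeupreg} and~\ref{prop:upreg} on the other; the $Q$-to-$p$ upgrade is exactly the observation the paper records before Proposition~\ref{prop:Xu}; and properness of both $Y_u$ and $\widehat P_u$ is read off from the factorization.

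Your monotonicity argument, however, is genuinely different from the paper's. The paper (following \cite[Lemma 6.3]{lyt18}) introduces the equidistant set $S=\{x:\dist(x,K_1)=\dist(x,K_2)\}$, uses the Morrey estimate~\eqref{eq:duestimate} to show that $d_u(y,\widehat P_u(\cdot))$ has a positive minimum $a$ on $S$, lets a short $(u,Q)$-regular curve joining small balls around $k_1\in K_1$ and $k_2\in K_2$ cross $S$ at some time $t$, and then contradicts $a>0$ via Proposition~\ref{prop:upreg} applied to the restricted curve $\gamma|_{[0,t]}$. Your argument instead fixes a separating open set $V$, uses the positivity of modulus granted by Remark~\ref{rmk:essinf}, confines the exiting curves to a compact set via properness of $u$, covers the relevant part of $\partial V$ by small balls, invokes countable subadditivity of modulus to isolate a positive-modulus subfamily whose exit points cluster, observes that restriction to the first-exit time can only increase modulus, and extracts a limit exit point $z_\infty\in\partial V$ with $d'_{u,Q}(x_1,z_\infty)=0$. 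Both routes are valid: the paper's is shorter once the Morrey continuity of $d'_{u,Q}$ is in hand, while yours trades that for a compactness/covering argument, essentially only using that modulus is an outer measure together with properness. One cosmetic slip: the confining compact set should read $u\inv\bigl(\bar B(u(x_0),R(\delta)+\varepsilon)\bigr)$ where $R(\delta)\to0$ bounds the $d_Y$-diameter of $u(\bar B(x_1,\delta))$ (e.g. $R(\delta)\lesssim\delta^{1-Q/p}$ by~\eqref{eq:duestimate}), rather than radius $\delta+\varepsilon$, since $u$ is not $1$-Lipschitz; this does not affect the argument.
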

Recall here that a map is called proper if the preimage of every compact set is compact or equivalently if the preimage of every singleton is compact.
\begin{proof}\emph{(1)} Suppose $\gamma$ is a $(u,Q)$-regular curve such that $g_u$ is an upper gradient of $u$ along $\gamma$. Then by Proposition~\ref{prop:upreg}
\begin{equation*}
d_u(\widehat{P}_u\circ \gamma(1),\widehat{P}_u\circ\gamma(0))\leq \ell(u\circ\gamma)\le \int_\gamma g_u.
\end{equation*}
By Proposition~\ref{prop:aeupreg} this implies that $\widehat P_u\in \Neml QX{Y_u}$ and $g_{\widehat P_u}\le g_u$. The opposite inequality follows because $\widehat{u}$ is $1$-Lipschitz.\par 
\emph{(2)} The factorization implies that $K\subset \widehat P_u( u\inv(\overline{\widehat u(K)}))$, for $K\subset Y_u$ and hence $Y_u$ is proper. Similarly $\widehat P_u^{-1}(K)\subset u^{-1}(\widehat{u}(K))$ and hence $\widehat{P}_u$ is proper. The proof of monotonicity is an adaptation of the proof of \cite[Lemma 6.3]{lyt18}.\par
Assume $\widehat P_u\inv(y)$ is not connected for some $y\in Y_u$. Then there are compact sets $K_1,K_2$ for which $\dist(K_1,K_2)>0$ and $\widehat P_u\inv(y)=K_1\cup K_2$. Let $S$ denote the closed and non-empty set of points $X$ whose distance to $K_1$ and $K_2$ agree. Let $a$ be the minimum of $x\mapsto \dist_u(y,\widehat{P}_u(x))$ on $S$. Note here that $\widehat{P}_u(S)$ is closed as $Y_u$ and $\widehat{P}_u$ are proper and hence the infimum is attained and positive.
Let $k_i\in K_i$ for $i=1,2$. Since $d_{u,Q}'(k_1,k_2)=0$, for every $\varepsilon>0$ and small enough $\delta>0$, Proposition \ref{prop:aeupreg} implies the existence of a $(u,Q)$-regular curve $\gamma\in \Gamma(B(k_1,\delta),B(k_2,\delta))$ with $\ell(u\circ\gamma)<\varepsilon$. If $\delta$ is chosen small enough the curve must intersect $S$ at some point $s:=\gamma(t)$. Since $\gamma$ is $(u,Q)$-regular, $\gamma|_{[0,t]}$ is $(u,Q)$-regular and it follows that \[
a\le d_{u,Q}'(s,k_1)\leq d_{u,Q}'(s,\gamma(0))+C\delta^{1-\frac{Q}{p}}\leq \ell(u\circ \gamma)+ C\delta^{1-\frac{Q}{p}}\leq \varepsilon+C\delta^{1-\frac{Q}{p}};
\]
cf. Proposition \ref{prop:upreg} and \eqref{eq:duestimate}. Choosing $\varepsilon>0$ and $\delta>0$ small enough this yields a contradiction. Thus $\widehat P_u\inv(y)$ is connected for every $y\in Y_u$.
\end{proof}
Assume additionally that $Y$ is endowed with a measure $\nu$ such that $Y=(Y,d,\nu)$ is a metric measure space. We equip $(Y_u,d_u)$ with the measure $\nu_u:=\widehat u^*\nu$, which is characterized by the property
\begin{equation}\label{eq:nuu}
\nu_u(E)=\int_Y\#(\widehat u\inv(y)\cap E)\ud\nu(y),\quad E\subset Y_u \textrm{ Borel};
\end{equation}
cf. \cite[Theorem 2.10.10]{fed69}. Note that in general  $\nu_u$ is not a $\sigma$-finite measure. For the next theorem, we say that a Borel map $u:X\to Y$ has Jacobian $Ju$, if there exists a Borel function $Ju:X\to[0,\infty]$ for which
\begin{equation}\label{eq:area}
\int_EJu\ud\mu=\int_Y\#(u\inv(y)\cap E)\ud\nu(y)
\end{equation}
holds for every Borel set $E\subset X$. The \emph{Jacobian} $Ju$, if it exists, is unique up to sets of $\mu$-measure zero.

\begin{theorem}\label{thm:pullbacksobtolip1}
Assume $u$ is proper, nonconstant and has a locally integrable Jacobian $J u$ which satisfies
\begin{equation}\label{eq:qr}
	g_u^Q\le KJu
	\end{equation}
	$\mu$-almost everywhere for some $K\ge 1$. Then the following properties hold.
\begin{itemize}
\item[(1)] $Y_u$ is a metric measure space and has the $Q$-Sobolev-to-Lipschitz property.
\item[(2)] $J u$ is the locally integrable Jacobian of $\widehat{P}_u$ and  $\#\widehat P_u\inv(y)=1$ for $\nu_u$-almost every $y\in Y_u$.
\end{itemize}
\end{theorem}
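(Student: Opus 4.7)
The plan is to prove (2) before (1), since the essential injectivity of $\widehat P_u$ established there drives the area-formula and modulus comparisons used in (1). The guiding picture is the factorization $u=\widehat u\circ\widehat P_u$: $\widehat P_u$ is a monotone ``spreading'' map (Lemma~\ref{thm:pullbacksobtolip}(2)) while $\widehat u$ ``counts multiplicities'', and $\nu_u$ records the $\widehat u$-image with multiplicity. The Jacobian identity in (2) is then formal: the disjoint decomposition
\[
u^{-1}(y)=\bigsqcup_{z\in\widehat u^{-1}(y)}\widehat P_u^{-1}(z)
\]
gives $\#(u^{-1}(y)\cap E)=\sum_{z\in\widehat u^{-1}(y)}\#(\widehat P_u^{-1}(z)\cap E)$; integrating against $\nu$ and invoking~\eqref{eq:nuu} turns the area formula~\eqref{eq:area} for $u$ into that for $\widehat P_u$ with the same Jacobian~$Ju$. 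For essential injectivity, $Ju\in L^1_\loc$ and~\eqref{eq:area} imply $\#u^{-1}(y)<\infty$ for $\nu$-a.e.~$y$; the decomposition then forces each $\widehat P_u^{-1}(z)$ with $z\in\widehat u^{-1}(y)$ to be finite. Since $\widehat P_u$ is monotone these fibers are connected, hence finite ones are singletons, and pushing the resulting full-measure set from $Y$ forward to $Y_u$ via~\eqref{eq:nuu} yields $\#\widehat P_u^{-1}(z)=1$ for $\nu_u$-a.e.~$z$.

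For (1), I first verify that $(Y_u,d_u,\nu_u)$ is a metric measure space: properness is Lemma~\ref{thm:pullbacksobtolip}(2); finiteness of $\nu_u$ on balls follows from the area formula applied to $\widehat P_u^{-1}(\bar B)$ together with $Ju\in L^1_\loc$; and positivity on balls uses non-constancy of $u$ and the quasiregularity $g_u^Q\le K\,Ju$ to rule out collapse onto $\nu$-null sets. The heart of the Sobolev-to-Lipschitz conclusion is the modulus comparison
\[
\Mod_Q^X\bigl(\widehat P_u^{-1}(\Gamma_0)\bigr)\le K\cdot\Mod_Q^{Y_u}(\Gamma_0),
\]
proved by lifting an admissible density $\rho$ for $\Gamma_0$ to $\tilde\rho=(\rho\circ\widehat P_u)\,g_{\widehat P_u}$: the upper-gradient inequality makes $\tilde\rho$ admissible for $\widehat P_u^{-1}(\Gamma_0)$ off a $Q$-null family, while $g_{\widehat P_u}^Q=g_u^Q\le K\,Ju=K\,J\widehat P_u$ combined with the area formula and $\#\widehat P_u^{-1}=1$ from (2) bounds $\|\tilde\rho\|_{L^Q(X)}^Q$ by $K\|\rho\|_{L^Q(Y_u)}^Q$. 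Given $f\in N^{1,Q}(Y_u)$ with $g_f\le 1$ and its $Q$-null exceptional family $\Gamma_0\subset\overline\Gamma(Y_u)$, for $x,x'\in X$ and $\delta,\varepsilon>0$ the essential-infimum definition of $d_u$ produces a family of curves of positive $Q$-modulus in $X$ between $\bar B(x,\delta)$ and $\bar B(x',\delta)$ with $\ell(u\circ\gamma)\le d_u([x],[x'])+\varepsilon+o_\delta(1)$. Subtracting the $Q$-null families from the modulus comparison, from Proposition~\ref{prop:aeupreg} (non-$(u,Q)$-regular curves), and from the weak upper-gradient failure for $\widehat P_u$, I extract $\gamma$ with $\widehat P_u\circ\gamma\notin\Gamma_0$, and Proposition~\ref{prop:upreg} yields
\[
|f([x])-f([x'])|\le \ell(\widehat P_u\circ\gamma)=\ell(u\circ\gamma)\le d_u([x],[x'])+\varepsilon+o_\delta(1).
\]
Letting $\delta,\varepsilon\to 0$ gives a 1-Lipschitz representative.

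The main obstacle I anticipate is the careful orchestration of these multiple $Q$-null families so that a single well-behaved curve $\gamma$ can be extracted from a family of positive $Q$-modulus; this hinges critically on the modulus comparison being sharp, which in turn relies on the essential injectivity $\#\widehat P_u^{-1}=1$ from (2). A secondary technical subtlety is the positivity of $\nu_u$ on balls, where the quasiregularity hypothesis is what prevents $u$ from collapsing open neighborhoods to $\nu$-null sets.
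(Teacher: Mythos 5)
Your overall strategy tracks the paper's closely---prove (2) first, use it for the modulus comparison, and then extract regular curves to get the Lipschitz estimate---and the proof of (2) is essentially the paper's argument with a minor reordering (you get finiteness of $\#u^{-1}(y)$ and singleton fibers first, then the Jacobian identity; the paper establishes $J\widehat P_u=Ju$ via a Fubini chain and then invokes monotonicity). Both routes are fine, and you correctly observe that properness of $u$ is what makes $\#u^{-1}(y)<\infty$ for $\nu$-a.e.\ $y$ from local integrability of $Ju$.

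However, there is a genuine gap in the Sobolev-to-Lipschitz step. The curve $\gamma$ you extract joins $\bar B(x,\delta)$ to $\bar B(x',\delta)$, not $x$ to $x'$, so Proposition~\ref{prop:upreg} only gives
\[
\bigl|f\bigl(\widehat P_u(\gamma(1))\bigr)-f\bigl(\widehat P_u(\gamma(0))\bigr)\bigr|\le \ell\bigl(\widehat P_u\circ\gamma\bigr)=\ell(u\circ\gamma),
\]
with $\gamma(0),\gamma(1)$ merely close to $x,x'$. Your displayed estimate replaces $f(\widehat P_u(\gamma(i)))$ by $f([x])$, $f([x'])$, which requires $f\circ\widehat P_u$ to be continuous near $x$ and $x'$ --- and a Newtonian function is a priori only quasicontinuous, so its value at the specific points $[x],[x']$ is not controlled by this argument alone. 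This is precisely where the paper inserts an extra step: having shown $|f\circ\widehat P_u(\gamma(1))-f\circ\widehat P_u(\gamma(0))|\le\int_\gamma g_u$ for $\Mod_Q$-a.e.\ $\gamma$, they use $g_u\in L^p_{\loc}$ with $p>Q$ and the Morrey embedding (Theorem~\ref{thm:morrey}) to obtain a \emph{continuous} representative $\bar f$ of $f\circ\widehat P_u$, and only then run the $\varepsilon$--$\delta$ argument against $\bar f$. Without that continuity step, letting $\delta\to 0$ does not convert the ball-to-ball estimate into a point-to-point one. A smaller omission: for positivity of $\nu_u$ on balls you identify quasiregularity as the key, but you do not address why $\widehat P_u$ must be \emph{globally} constant on $U:=\widehat P_u^{-1}(B)$ when $g_u=0$ a.e.\ on $U$; the paper needs a nontrivial argument (via $(u,Q)$-regular curves) that $U$ is connected before concluding the contradiction with nonconstancy of $u$.
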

\begin{proof}
Note that
\begin{align*}
\int_{\widehat P_u\inv E}Ju\ud\mu=\int_Y\#(u\inv(y)\cap \widehat P_u\inv E)\ud\nu(y)\ge \int_Y\#(\widehat u\inv(y)\cap E)\ud\nu(y)=\nu_u(E)
\end{align*}
for every Borel set $E\subset Y_u$. Thus $\nu_u$ is a locally finite measure, and the estimate above implies that $\widehat P_u$ has a locally integrable Jacobian $J\widehat P_u\le Ju$. For any Borel set $E\subset X$ we have
\begin{align*}
\int_EJ \widehat P_u\ud\mu&=\int_{Y_u} \#(\widehat P_u\inv(y)\cap E)\ud\nu_u(y)=\int_Y\left(\sum_{y\in \widehat u\inv(z)}\#(\widehat P_u\inv(y)\cap E)\right)\ud\nu(z)\\
&=\int_Y\#(u\inv(z)\cap E)\ud\nu(z)=\int_EJu\ud\mu,
\end{align*}
which yields $J\widehat{P}_u=J u$ almost everywhere. Since $\widehat P_u$ is monotone and satisfies \eqref{eq:area} we have that $\#P_u\inv(y)=1$ for $\nu_u$-almost every $y\in Y_u$.\par
By Lemma~\ref{thm:pullbacksobtolip} to see that $Y_u$ is a metric measure space it remains to show that balls in $Y_u$ have positive measure. The idea of of proof is borrowed from~\cite[Lemma~6.11]{lyt18}. Assume $B=B(z,r)$ is a ball in $Y_u$ such that $\nu_u(B)=0$. Then we would have $J \widehat P_u=J_u=0$ almost everywhere on $U:=\widehat{P}_u^{-1}(B)$. By \eqref{eq:qr} and Lemma~\ref{thm:pullbacksobtolip} $g_{\widehat{P}_u}=g_u=0$ almost everywhere on~$U$. Thus $\widehat{P}_u$ is locally constant on~$U$. But $U$ is connected and hence $\widehat{P}_u$ is constant.\par 
To see that $U$ is connected let $x\in U$ satisfy $\widehat{P}_u(x)=z$. Then, for $y\in U$ and $\delta>0$ arbitrary, there is a $(u,Q)$-regular curve $\gamma\in \Gamma(B(x,\delta),B(y,\delta))$ such that $\ell(u\circ \gamma)<r$. For $\delta$ small enough by Proposition~\ref{prop:upreg} the image of $\gamma$ is contained in $U$. As this holds for all small enough $\delta$, the points $x$ and $y$ must lie in the same connected component of the open set $U$. Since $y$ was arbitrary, $U$ must be connected.\par 
We have deduced that $B=\widehat{P}_u(U)$ consists only of the single point $z$. So either $Y_u$ is disconnected or consists of a single point. The former is impossible because $X$ is connected and the later by the assumption that~$u$ is nonconstant.\par 
Before showing the Sobolev-to-Lipschitz property we first claim
\begin{align}\label{eq:modulus}
\Mod_Q\Gamma\le K\Mod_Q\widehat P_u(\Gamma)
\end{align}
for every curve family $\Gamma$ in $X$. Indeed, suppose $\rho\in L^Q(Y_u)$ is admissible for $\widehat P_u(\Gamma)$, and set $\rho_1=\rho\circ \widehat P_u$. Let $\Gamma_0$ a curve family with $\Mod_Q\Gamma_0=0$ for which $\gamma$ is $(\widehat P_u,Q)$- and $(u,Q)$-regular, and $g_u$ is an upper gradient of $\widehat P_u$ and $u$ along $\gamma$, whenever $\gamma\notin\Gamma_0$. For any $\gamma\in \Gamma\setminus\Gamma_0$ we have
\begin{align*}
1\le \int_{\widehat P_u\circ\gamma}\rho=\int_0^1\rho_1(\gamma(t))|(\widehat P_u\circ\gamma)_t'|\ud t\le \int_0^1\rho_1(\gamma(t))g_u(\gamma(t))|\gamma_t'|\ud t,
\end{align*}
i.e. $\rho_1g_u$ is admissible for $\Gamma\setminus\Gamma_0$. We obtain
\begin{align*}
\Mod_Q\Gamma=\Mod_Q(\Gamma\setminus\Gamma_0)&\le \int_X\rho_1^Qg_u^Q\ud\mu\le K\int_X\rho^Q\circ \widehat P_u Ju\ud\mu\\
&=K\int_{Y_u} \rho^Q\ud\nu_u.
\end{align*}
The last equality follows, since Lemma~\ref{thm:pullbacksobtolip} implies $\widehat{P}_{u*}(Ju\ \mu)=\nu_u$. Taking infimum over admissible $\rho$ yields \eqref{eq:modulus}.

We prove that $Y_u$ has the $Q$-Sobolev-to-Lipschitz property. Suppose $f\in \Ne Q{Y_u}$ satisfies $g_f\le 1$. There is a curve family $\Gamma_0$ in $Y_u$ with $\Mod_{Q}(\Gamma_0)=0$ such that
\begin{align*}
|f(\gamma(1))-f(\gamma(0))|\le \ell_u(\gamma)
\end{align*}
whenever $\gamma\notin\Gamma_0$. By \eqref{eq:modulus} we have
\[
\Mod_{Q}\widehat P_u\inv\Gamma_0\le K\Mod_{Q}\Gamma_0=0.
\]
Here $\widehat P_u\inv\Gamma$ denotes the family of curves $\gamma$ in $X$ such that $\widehat P_u\circ\gamma\in \Gamma_0$. Together with Propositions~\ref{prop:aeupreg} and~\ref{prop:upreg} this implies that, for $\Mod_{Q}$-almost every curve~$\gamma$ in~$X$, we have
\[
|f\circ \widehat P_u(\gamma(1))-f\circ \widehat P_u(\gamma(0))|\le \ell_u(\gamma)=\ell(u\circ\gamma)\le \int_\gamma g_u.
\]
Since $g_u\in L^p_{\loc}(X)$ for $p>Q$, it follows that $f\circ \widehat P_u$ has a continuous representative~$\bar f$.
Let $x,y\in X$ be distinct, $\varepsilon >0$ arbitrary, and $\delta>0$ such that \[
|\bar f(x)-\bar f(z)|+|\bar f(y)-\bar f(w)|<\varepsilon
\]
 whenever $z\in \bar B(x,\delta)$ and $w\in \bar B(y,\delta)$. Denote by $\Gamma_1$ the curve family with $\Mod_{X,Q}\Gamma_1=0$ so that
\[
|\bar f(\gamma(1))-\bar f(\gamma(0))|\le \ell(u\circ\gamma)\quad\textrm{whenever }\gamma\notin\Gamma_1.
\]
Then 
\begin{align*}
|\bar f(x)-\bar f(y)|\le \varepsilon + \underset{\gamma\in \Gamma(\bar B(x,\delta),\bar B(y,\delta))\setminus \Gamma_1}{\inf \ell(u\circ\gamma)}\le \varepsilon+d_{u,Q}'(x,y).
\end{align*}
Since $x,y\in X$ and $\varepsilon>0$ are arbitrary it follows that $\bar f(x)=\bar f(y)$ whenever $d_{u,Q}'(x,y)=0$ and that the map $[x]\mapsto \bar f(x)$ is 1-Lipschitz with respect to the metric $d_u$, and is a $\nu_u$-representative of $f$.
\end{proof}

\section{Essential metrics on surfaces}\label{sec:discs}
In this section we apply the constructions from Section \ref{sec:esspullback} to prove Theorems~\ref{thm:discmain} and~\ref{cor:plateau}.
\subsection{Ahlfors regular spheres}
\label{sec:spheres}
In what follows, $Z$ is an Ahlfors $2$-regular metric sphere which is linearly locally connected. We consider $Z$ as endowed with the Hausdorff $2$-measure~$\mathcal{H}^2_Z$. Denote by $\Lambda(Z)$ the set of cell-like maps $u\in N^{1,2}(S^2,Z)$. Recall that a continuous map is called cell-like if the preimage of every point is cell-like. The definition of general cell-like sets may be found in~\cite{lyt18} but for our purposes it suffices to know that a compact subset~$K$ of $S^2$ is cell-like if and only if $K$ and $S^2\setminus K$ are connected. Let us recall the following result from~\cite{lyt17}.
\begin{theorem}[\cite{lyt17}]
The set $\Lambda(Z)$ is nonempty and contains an element $u\in \Lambda(Z)$ of least Reshetnyak energy $E^2_+(u)$. Such $u$ is a quasisymmetric homeomorphism and unique up to conformal diffeomorhism of $S^2$.
\end{theorem}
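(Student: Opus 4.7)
The plan is to combine the Bonk--Kleiner quasisymmetric uniformization theorem with a direct-method energy minimization, and then show the minimizer inherits quasisymmetry and uniqueness from its infinitesimal quasiconformality.

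First I would establish nonemptiness. By the Bonk--Kleiner theorem, the Ahlfors $2$-regularity and linear local connectedness of $Z$ yield a quasisymmetric homeomorphism $\varphi:S^2\to Z$. Any such quasisymmetry between Ahlfors $2$-regular spheres lies in $N^{1,2}(S^2,Z)$ (standard upper gradient estimates from the quasisymmetric distortion combined with Ahlfors regularity), and as a homeomorphism it is trivially cell-like, so $\varphi\in\Lambda(Z)$.

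Next I would produce an energy minimizer by the direct method. Choose a minimizing sequence $u_n\in\Lambda(Z)$ with $E_+^2(u_n)\to\inf_{\Lambda(Z)}E_+^2$. To kill the noncompact Möbius freedom of $S^2$, prenormalize each $u_n$ by a conformal self-map of $S^2$ so that three fixed points of $Z$ have prescribed preimages. Using equiboundedness of energies plus lower semicontinuity, together with Korevaar--Schoen/Kirchheim--Reshetnyak compactness for Sobolev maps into metric spaces, a subsequence converges (uniformly, after verifying equicontinuity via the modulus/capacity estimates available for normalized energy-bounded maps on $S^2$) to some $u\in N^{1,2}(S^2,Z)$ with $E_+^2(u)\leq \liminf E_+^2(u_n)$. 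Cell-likeness passes to uniform limits in dimension two by a Moore/Youngs-type result, so $u\in\Lambda(Z)$ and $u$ is the desired minimizer.

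The core analytic step is to promote $u$ to a quasisymmetric homeomorphism. Here I would invoke the principle that any energy minimizer among cell-like maps is infinitesimally $\sqrt{2}$-quasiconformal: the Jacobian/area versus $E_+^2$ inequality $J(\apmd_x u)\le (1/\sqrt{2})\,I_+^2(\apmd_xu)$ coupled with an inner variation argument (replacing $u$ by $u\circ\psi$ for a nontrivial smooth vector field's flow on $S^2$) forces saturation of the seminorm inequality almost everywhere, yielding infinitesimal $\sqrt{2}$-quasiconformality. The monotonicity proposition for infinitesimally quasiconformal maps (the one quoted earlier from \cite{lyt17}) then gives the modulus bound $\Mod_2\Gamma\le \sqrt{2}\,\Mod_2 u(\Gamma)$. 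Combined with Ahlfors $2$-regularity and linear local connectedness of both spheres, the Heinonen--Koskela theory upgrades this geometric modulus control to quasisymmetry, and cell-likeness combined with the modulus estimates rules out any nontrivial preimage collapse, so $u$ is a homeomorphism.

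For uniqueness, given two energy minimizers $u,v\in\Lambda(Z)$, the map $h:=u^{-1}\circ v:S^2\to S^2$ is a homeomorphism that is quasiconformal in every reasonable sense (via the composition with the minimizer structure), and energy minimality gives $E_+^2(u\circ h)=E_+^2(v)=E_+^2(u)$, which via the quasiconformal distortion identity forces $h$ to be $1$-quasiconformal and hence, by the classical uniformization of $S^2$, a conformal diffeomorphism.

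The main obstacle I expect is the step from ``cell-like energy minimizer'' to ``quasisymmetric homeomorphism'': one has to rule out that the minimizer collapses any nontrivial continuum while still controlling distortion globally, which is where the interplay between the infinitesimal quasiconformality, the Ahlfors regularity of $Z$, and the modulus estimates on $S^2$ is most delicate.
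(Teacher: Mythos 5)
This theorem is not proved in the paper; it is recalled verbatim from~\cite{lyt17} (the \texttt{[\textbackslash cite\{lyt17\}]} tag on the theorem and the preceding line ``Let us recall the following result from~\cite{lyt17}'' make this explicit). There is therefore no ``paper's own proof'' to compare against. That said, your proposal is a broadly correct outline of the original Lytchak--Wenger argument, and matches it in structure: nonemptiness via Bonk--Kleiner, existence of a minimizer by the direct method with a three-point Möbius normalization and Courant--Lebesgue-type equicontinuity, infinitesimal $\sqrt{2}$-quasiconformality from inner variations, upgrade to quasisymmetry via the geometric modulus inequality together with Ahlfors regularity and LLC, and uniqueness by showing $u^{-1}\circ v$ is $1$-quasiconformal.

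A few points deserve caution. The seminorm inequality you write, $J(\apmd_xu)\le (1/\sqrt 2)\,I_+^2(\apmd_xu)$, is false for general seminorms (for the Euclidean norm on $\R^2$ one has $J=I_+^2$). What holds universally is $J(s)\le I_+^n(s)$; the $\sqrt 2$-quasiconformality $I_+^2(\apmd_xu)\le\sqrt 2\, J(\apmd_xu)$ is what the inner-variation argument has to establish, not a pointwise inequality it ``saturates.'' Also, the passage from infinitesimal quasiconformality to an actual homeomorphism is not merely a matter of ``ruling out preimage collapse'' from the modulus estimates; in~\cite{lyt17} one must first show the minimizer is a uniform limit of homeomorphisms and then feed the one-sided modulus inequality (valid only for monotone maps) into the Heinonen--Koskela machinery. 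Finally, in the uniqueness step the identity $E_+^2(u\circ h)=E_+^2(u)$ does not by itself yield $1$-quasiconformality of $h$ without a change-of-variables computation for the Reshetnyak energy that uses the already-established infinitesimal quasiconformality of both minimizers. None of these change the architecture of the argument, but they are where the real work lies.
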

In the following we fix such energy minimzing quasisymmetric homeomorphism $u\in N^{1,2}(S^2,Z)$. Then $Z$ supports a 2-Poincar\'e inequality, and $u$ and $u\inv$ are quasiconformal; cf. \cite[Corollary 8.15 and Theorem 9.8]{hei00}. Moreover, by \cite[Theorem~2.9 and Corollary~4.20]{kor09}, we have that $u\in \Nem p{S^2}Z$ for some $p>2$.\par 
We denote by $\widehat{Z}=(Z,d_2)$ the metric space as constructed in Section~\ref{sec:sobtolip} and by~$\widehat{Z}_u=(S^2,d_{u,2})$ the metric space as constructed in Section~\ref{sec:pullbackhomeo}. It turns out that in fact the two constructions coincide.
\begin{lemma}
 The map $\widehat{Z}_u\rightarrow \widehat{Z}$ induced by $u$ defines an isometry.
\end{lemma}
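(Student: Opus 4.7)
The plan is to establish the pointwise identity
\begin{equation*}
d'_{u,2}(x,y) = d'_{2}(u(x),u(y)),\quad x,y\in S^2,
\end{equation*}
where on the right we regard $d'_2$ as the essential distance on $Z$ coming from $\id_Z$. Since $d_{u,2}$ coincides with $d'_{u,2}$ by Proposition~\ref{prop:Xu} (applied to $u\in \Nem p{S^2}Z$ with $p>2$), and $d_2$ coincides with $d_2'$ by the remark following Proposition~\ref{prop:Xu} (together with Theorem~\ref{thm:minmetric}), while the equivalence classes of $d'_{u,2}$ are singletons by Lemma~\ref{lem:elementary} and the injectivity of $u$, such an identity will directly show that the induced map $\widehat{Z}_u\to \widehat Z$, $[x]\mapsto u(x)$, is an isometric bijection.

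The key step is to show that for every curve family $\Gamma$ in $S^2$,
\begin{equation*}
ess\ell_{u,2}(\Gamma) = ess\ell_2(u(\Gamma)),
\end{equation*}
where on the right the essential length is computed with respect to $2$-modulus in $Z$. The bijection $\gamma\mapsto u\circ\gamma$ between Lipschitz curves in $S^2$ and $Z$ preserves the quantity $\ell(u\circ\gamma)$ tautologically; the content of the equality lies in the fact that the corresponding $2$-modulus null families match. This in turn follows from the two-sided quasiconformality already recorded just above the lemma: both $u$ and $u^{-1}$ are geometrically $2$-quasiconformal, so $\Mod_2\Gamma_0$ and $\Mod_2u(\Gamma_0)$ are comparable, hence simultaneously zero. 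Feeding this equivalence of null families into Definition~\ref{def:essinf} gives the asserted identity.

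To pass from curve families to the distance identity, I would apply the above to $\Gamma=\Gamma(\bar B(x,\delta),\bar B(y,\delta))$, giving $ess\ell_{u,2}(\Gamma)=ess\ell_2(\Gamma_Z(u(\bar B(x,\delta)),u(\bar B(y,\delta))))$. By continuity of $u$, for every $\varepsilon>0$ one has $u(\bar B(x,\delta))\subset \bar B(u(x),\varepsilon)$ and $u(\bar B(y,\delta))\subset \bar B(u(y),\varepsilon)$ for $\delta$ sufficiently small; conversely, by continuity of $u^{-1}$, for every $\delta>0$ one has $\bar B(u(x),\varepsilon)\subset u(\bar B(x,\delta))$ for $\varepsilon$ sufficiently small, and analogously at $y$. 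Inclusions of source/target sets reverse inclusions of curve families, and hence give the opposite inequalities for $ess\ell_2$. A standard sandwich argument then produces
\begin{equation*}
\lim_{\delta\to 0}ess\ell_2(\Gamma_Z(u(\bar B(x,\delta)),u(\bar B(y,\delta))))=\lim_{\varepsilon\to 0}ess\ell_2(\Gamma_Z(\bar B(u(x),\varepsilon),\bar B(u(y),\varepsilon)))=d'_2(u(x),u(y)),
\end{equation*}
yielding the desired equality $d'_{u,2}(x,y)=d'_2(u(x),u(y))$.

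The principal obstacle is the first step: ensuring that $2$-modulus null families on the two sides correspond. This is where the Ahlfors $2$-regular, linearly locally connected hypothesis is essential, as it is what guarantees two-sided quasiconformality of the quasisymmetric map $u$. Everything else is bookkeeping with inclusions of neighborhoods and the definition of the essential infimum via $2$-modulus.
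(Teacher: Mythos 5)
Your proposal is correct and follows essentially the same route as the paper: the paper's one-line proof simply records that $u$ is a homeomorphism and that $\Mod_2\Gamma>0\Leftrightarrow\Mod_2(u\circ\Gamma)>0$ (i.e. two-sided quasiconformality), which is exactly the mechanism you identify as matching the $2$-modulus null families and hence the essential infima. Your write-up supplies the bookkeeping the paper leaves implicit, namely the sandwich argument replacing $u(\bar B(x,\delta))$ by metric balls $\bar B(u(x),\varepsilon)$ and the observation (via Lemma~\ref{lem:elementary} and injectivity of $u$) that the equivalence classes defining $\widehat Z_u$ are singletons, both of which are correct.
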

\begin{proof}
Since $u$ is a homeomorphism, the claim follows immediately from the construction and the fact that
\begin{equation*}
\Mod_2(\Gamma)>0 \Leftrightarrow \Mod_2(u\circ \Gamma)>0
\end{equation*}
for every family of curves $\Gamma$ in $S^2$.
\end{proof}
In the following we identify $\widehat{Z}$ and $\widehat{Z}_u$ under this isometry. Thus we have a canonical factorization $u=\widehat{u}\circ \widehat{P}_u$ where $\widehat{P}_u:S^2\rightarrow \widehat{Z}$ is a homeomorphism and $\widehat{u}:\widehat{Z}\rightarrow Z$ a $1$-Lipschitz homeomorphism.\par
Note that $Z$ as supports a $2$-Poincar\'e in particular it is $2$-thick quasiconvex. By Theorem~\ref{thm:minmetric} the space $\widehat{Z}$ is bi-Lipschitz equivalent to $Z$ and hence itself an Ahlfors $2$-regular metric sphere which is linearly locally connected.\par
Another crucial observation is the following. If $Y$ is another complete metric space and $v\in N^{1,2}(S^2,Y)$, then \begin{equation*}
\apmd u=\apmd v
\end{equation*} holds almost everywhere on $S^2$ if and only if
  \begin{equation*}
 \ell(u\circ \gamma)=\ell(v\circ \gamma)
 \end{equation*}
holds for $2$-almost every curve $\gamma$ in $S^2$. In particular $Z_u=Y_v$ if either of the two conditions holds, see \cite[Lemma 3.1 and Corollary 3.2]{lyt18}.

\begin{proof}[Proof of Theorem \ref{thm:discmain}]
By Theorem~\ref{thm:pullbacksobtolip} we have that $\widehat{P}_u\in N^{1,p}(S^2,\widehat{Z})$. By Proposition~\ref{prop:aeupreg} and Proposition~\ref{prop:upreg}, $2$-almost every curve $\gamma$ in $S^2$ satisfies
$\ell(u\circ \gamma)=\ell(\widehat{P}_u\circ \gamma)$. In particular
\begin{equation}
\label{eq:eqmetdif}
\apmd u =\apmd \widehat{P}_u
\end{equation}
 holds almost everywhere on $S^2$. Assume $v\in \Lambda(\widehat{Z})$ is such that $E^2_+(v)<E^2_+(\widehat{P}_u)$. Then $\widehat{u}\circ v \in \Lambda(Z)$ and
\begin{equation}
E^2_+(\widehat{u}\circ v)\leq E^2_+(v)<E^2_+(\widehat{P}_u)=E^2_+(u).
\end{equation}
This contradicts the assumption that $u$ is of minimal energy and hence~$\widehat{P}_u$ is energy minimizing in $\Lambda(\widehat{Z})$. By \eqref{eq:eqmetdif} we obtain that~$\widehat{Z}$ is analytically equivalent to~$Z$.\par
By Theorem~\ref{thm:minmetric} we know that $\widehat{Z}$ is thick geodesic~\textbf{with respect to $\mathcal{H}^2_Z$}. We want thick geodecity however to hold with respect to $\mathcal{H}^2_{\widehat{Z}}$. In particular to see property~(2) it suffices to show that \begin{equation}
\label{eq:measeq}
\mathcal{H}^2_Z=\mathcal{H}^2_{\widehat{Z}}.
\end{equation}
Equality \eqref{eq:measeq} follows however immediately from~\eqref{eq:eqmetdif}, Theorem~\ref{thm:rectarea} and the fact that $\widehat{P}_u$ and $\widehat{u}$ are homeomorphisms which satisfy higher integrability.\par 
Since $\widehat{Z}_u$ is Ahlfors $2$-regular, Theorem~\ref{thm:sobtolip} implies that $\widehat{Z}$ has the $2$-Sobolev-to-Lipschitz property. (We could derive this also directly from Theorem~\ref{thm:pullbacksobtolip1} by showing $\nu_u=\mathcal{H}^2_{\widehat{Z}}$ where $\nu=\mathcal{H}^2_Z$.)\par 
Let $Y$ be analytically equivalent to $\widehat{Z}$ and hence also to $Z$. Let $v\in \Lambda(Y)$ be the energy minimizing homeomorphism. Then $Y_v=\widehat{Z}_u=\widehat{Z}$ and hence the desired surjective $1$-Lipschitz homeomorphism $\widehat{Z}\rightarrow Y$ is given by $f=\widehat{v}$. This proves (3).\par 
Next we prove (4), i.e. volume rigidity. Let $Y$ be an Ahlfors $2$-regular metric sphere which is linearly locally connected. Suppose $f:Y\rightarrow \widehat{Z}$ is $1$-Lipschitz, cell-like and volume preserving with respect to $\mathcal{H}^2_Y$ and $\mathcal{H}^2_Z$. Let $v\in \Lambda(Y)$ be an energy minimizer. Then $v$ is a quasisymmetric homeomorphism, and
\begin{equation*}
\mathcal{H}^2(Y)=\mathcal{H}^2(\widehat{Z})\leq \int_{S^2} J(\apmd_p f\circ v)\leq  \int_{S^2} J(\apmd_p v)=\mathcal{H}^2(Y).
\end{equation*}
Thus $f\circ v$ is an area minimizer in $\Lambda(\widehat Z)$ and $\apmd f\circ v =\apmd v$ almost everywhere. Since $v$ is infinitesimally quasiconformal, see \cite[Theorem~6.6]{lyt17}, the same holds for~$f\circ v$. By the proof of~\cite[Theorem 6.3]{lyt17} this shows that $f\circ v$ is quasisymmetric. It follows that $f$ is quasisymmetric and thus $f\inv$ is quasiconformal. Proposition~\ref{prop:volrig} implies that $f$ is an isometry.\par 
	
	It remains to check that $\widehat{Z}$ is characterized uniquely by any of the listed properties. For the forthcoming discussion, we fix a metric sphere $Y$ which is analytically and bi-Lipschitz equivalent to $Z$. We moreover fix an energy minimal homeomorphism $v:S^2 \to Y$ such that $\apmd u=\apmd v$ almost everywhere.

	If $Y$ satisfies (1) (resp. (2)), then by Theorem \ref{thm:sobtolip} it satisfies (2) (resp. (1)) and, by Theorem \ref{thm:minmetric} (see Corollary \ref{cor:minmetric}), we have that $\widehat Y=Y$. But under the usual identifications $\widehat{Y}=\widehat{Y}_v=\widehat{Z}_u=\widehat{Z}$ and hence $Y$ is isometric to $\widehat{Z}$.
	
	If $Y$ satisfies (3), then there are surjective $1$-Lipschitz maps $f:Y\rightarrow \widehat{Z}$ and $g:\widehat{Z}\rightarrow Y$. Since $Y$ and $\widehat{Z}$ are compact, the composition $f\circ g$ is an isometry, see \cite[Theorem 1.6.15]{bur01}. By the fact that $f$ is $1$-Lipschitz it follows that $g$ is an isometry.
	
	Assume $Y$ satisfies (4). The canonical map $\widehat Y\to Y$ is 1-Lipschitz and volume preserving. In particular by volume rigidity $\widehat{Y}$ is isometric to $Y$. However the former is isometric to $\widehat{Z}$. This concludes the proof.
	\end{proof}

\subsection{Metric discs}
\label{sec:discss}
	In this subsection let $Z$ be a geodesic metric disc such that $(Z,\mathcal{H}^2_Z)$ is a metric measure space. 
	First we discuss the following weaker variant of Theorem~\ref{thm:discmain} in the disc setting.
\begin{theorem}\label{maxdiscgen}
 Assume $u\in \Lambda(\partial Z,Z)$ is a minimizer of $E^2_+$ which is moreover monotone and lies in~$N^{1,p}(\overline{\D},Z)$ for some $p>2$. Then there is a geodesic metric disc $\widehat{Z}_u$ and a factorization $u=\widehat{u}\circ \widehat{P}_u$ such that:
	\item[(1)] $\widehat{Z}_u$ satisfies the Sobolev-to-Lipschitz property.
	\item[(2)] $\widehat{P}_u\in \Lambda(\partial \widehat Z_u,\widehat Z_u)$ is a minimizer of $E^2_+$, contained in $N^{1,p}(\overline\D,\widehat{Z}_u)$, and a uniform limit of homeomorphisms.
	\item[(3)] For $2$-almost every curve $\gamma$ in $\overline\D$ one has $\ell(\widehat{P}_u\circ \gamma)=\ell(u\circ \gamma)$.
	\item[(4)] If $Y$ is a disc, and $v\in \Lambda(\partial Y,Y)$ satisfies $\apmd u=\apmd v$ almost everywhere, then there is a surjective $1$-Lipschitz map $j:\widehat{Z}\rightarrow Y$ such that $v=j\circ \widehat{P}_u$.
	\item[(5)] If $Z$ satisfies a $(C,l_0)$-quadratic isoperimetric inequality then $\widehat{Z}_u$ satisfies a $(C,l_0)$-quadratic isoperimetric inequality.
\end{theorem}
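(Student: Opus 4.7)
The plan is to take $\widehat Z_u$ to be the essential pull-back space $Y_u$ constructed in Section~\ref{sec:pullbackhomeo} and then check the five listed properties in turn. To set up the construction, observe that $\overline{\D}$ satisfies a $2$-Poincar\'e inequality, and by assumption $u\in N^{1,p}(\overline{\D},Z)$ with $p>2$; thus Proposition~\ref{prop:Xu} applies to give the essential pull-back pseudometric $d_u$. Moreover $u$ is proper (continuous on a compact domain), monotone by hypothesis, and — being a minimizer of $E^2_+$ — infinitesimally $\sqrt{2}$-quasiconformal, so $g_u^2\le 2\cdot Ju$ almost everywhere. Hence Theorem~\ref{thm:pullbacksobtolip1} applies with $Q=2$, giving property~(1) together with the factorization $u=\widehat u\circ \widehat P_u$, where by Lemma~\ref{thm:pullbacksobtolip} $\widehat P_u$ is proper, monotone, lies in $N^{1,p}_{\loc}(\overline\D,\widehat Z_u)$ with $g_{\widehat P_u}=g_u$ a.e., and $\widehat u$ is $1$-Lipschitz.

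Next I would verify that $\widehat Z_u$ is homeomorphic to $\overline\D$ by applying R.L. Moore's decomposition theorem to the decomposition $\{\widehat P_u^{-1}(y)\}_{y\in \widehat Z_u}$ of $\overline\D$. The fibers are compact and connected by monotonicity of $\widehat P_u$; moreover, since $\widehat P_u^{-1}(y)\subset u^{-1}(\widehat u(y))$ and $u$ is monotone, the fibers sit inside cellular subsets of $\overline\D$, and Theorem~\ref{thm:pullbacksobtolip1} implies that these fibers are singletons for $\nu_u$-almost every $y$, making the decomposition cellular and upper semicontinuous. Once $\widehat Z_u \cong \overline\D$, the fact that monotone continuous surjections between closed discs are uniform limits of homeomorphisms yields the remaining content of~(2), modulo energy-minimality of $\widehat P_u$: any competitor $v\in \Lambda(\partial \widehat Z_u,\widehat Z_u)$ with strictly smaller energy would give $\widehat u\circ v\in \Lambda(\partial Z,Z)$ with strictly smaller energy than $u$, contradicting minimality of $u$. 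Property~(3) follows directly from Propositions~\ref{prop:aeupreg} and~\ref{prop:upreg}: $2$-almost every curve is $(u,2)$-regular, and for such curves $\ell(\widehat P_u\circ\gamma)=\ell(u\circ\gamma)$.

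For the maximality property~(4), given $v\in \Lambda(\partial Y,Y)$ with $\apmd u=\apmd v$ almost everywhere, the equivalence \cite[Lemma~3.1 and Corollary~3.2]{lyt18} yields $\ell(u\circ\gamma)=\ell(v\circ\gamma)$ for $2$-almost every curve. A direct comparison of the essential pull-back pseudometrics gives $d_v'\le d_u'$, so the assignment $\widehat P_u(x)\mapsto v(x)$ descends to a well-defined, surjective, $1$-Lipschitz map $j:\widehat Z_u\to Y$ with $v=j\circ \widehat P_u$. For the quadratic isoperimetric inequality~(5), I would invoke the characterization stated at the end of Section~\ref{sec:preli}: on a geodesic surface, $(C,l_0)$-QII is equivalent to the area bound $\mathcal{H}^2(U)\le C\ell(\Gamma)^2$ for every Jordan domain $U$ bounded by a Jordan curve $\Gamma$ of length less than $l_0$. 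Given such a $\hat \Gamma\subset \widehat Z_u$ bounding $\hat U$, the uniform-limit-of-homeomorphisms property of $\widehat P_u$ produces a Jordan domain $\tilde U\subset \overline\D$ with $\widehat P_u(\tilde U)=\hat U$, and the area formula combined with $J\widehat P_u=Ju$ identifies $\mathcal{H}^2(\hat U)$ with $\operatorname{Area}(u|_{\tilde U})$. Passing to a Lipschitz reparametrization of $\widehat u\circ \hat\Gamma$, a cut-and-replace argument using the QII of $Z$ and the area-minimality of $u$ bounds $\operatorname{Area}(u|_{\tilde U})$ by $C\ell(\hat\Gamma)^2$.

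The main technical obstacles are the topological step, where cellularity and upper semicontinuity of the fibers of $\widehat P_u$ must be verified carefully in order to apply Moore's theorem, and the quadratic isoperimetric bound~(5), where one must both pass from the Jordan curve $\hat \Gamma\subset \widehat Z_u$ to a filling argument in $Z$ via a Lipschitz reparametrization of $\widehat u\circ \hat\Gamma$ and justify that $u|_{\tilde U}$ is area-minimizing for the boundary curve it bounds — the latter is the standard cut-and-replace trick, but it requires careful gluing of Sobolev maps on $\partial \tilde U$ and the verification that the global competitor so obtained lies in $\Lambda(\partial Z, Z)$.
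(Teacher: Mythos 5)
Your overall scaffolding is correct and matches the paper: take $\widehat Z_u$ to be the essential pull-back space from Section~\ref{sec:pullbackhomeo}, observe that $u$ satisfies the hypotheses of Theorem~\ref{thm:pullbacksobtolip1}, and verify the five properties. Your treatment of (3) and the energy-minimality part of (2) is exactly right, and your derivation of (4) from the comparison of pull-back pseudometrics is a slightly different but sound route. However, there are genuine gaps in the two places you yourself flag as delicate.

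First, the topological step in (2). You argue that the fibers of $\widehat P_u$ are cellular because they ``sit inside cellular subsets of $\overline\D$''; this does not follow, since subsets of cellular sets need not be cellular. You then infer cellularity and upper semicontinuity from the a.e.\ injectivity given by Theorem~\ref{thm:pullbacksobtolip1}, which is a non sequitur --- knowing most fibers are points says nothing about the structure of the exceptional ones. More importantly, you do not address the boundary behavior at all: to apply \cite[Corollary 7.12]{lyt18} (or, equivalently, Moore's theorem for the closed disc) one must show not only that $\widehat P_u$ is cell-like but that its restriction to $S^1$ is monotone onto its image. This is the main technical content of the paper's argument for (2): one works on $S^2$, considers $K=\widehat P_u^{-1}(y)$, adjoins the arc $A\subset S^1$ on which $u$ is constant if $K\cap S^1$ were disconnected, shows $M=K\cup A$ is cell-like, and then derives a contradiction from Moore's quotient theorem because $S^2/M$ would have a cut point. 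None of this is in your sketch, and the a.e.\ singleton fact cannot substitute for it. Relatedly, to conclude that $\widehat Z_u$ has the Sobolev-to-Lipschitz property with respect to $\mathcal{H}^2_{\widehat Z_u}$ (not just $\nu_u$), one must verify $\nu_u=\mathcal{H}^2_{\widehat Z_u}$; the paper does this by showing $\widehat u$ is monotone and running the area formula through the factorization, and this step is absent from your proposal.

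Second, the isoperimetric bound (5). Your plan is to pull a Jordan domain $\hat U\subset\widehat Z_u$ back to a Jordan domain $\tilde U\subset\overline\D$ with $\widehat P_u(\tilde U)=\hat U$ and then run a cut-and-replace. This does not go through: $\widehat P_u$ is a uniform limit of homeomorphisms but not itself a homeomorphism, so $\widehat P_u^{-1}(\hat U)$ is open and simply connected but need not be a Jordan domain, and even if it were, there is no reason for $\ell(\widehat P_u|_{\partial\tilde U})$ to be comparable to $\ell(\hat\Gamma)$ --- it could be strictly larger or infinite, which destroys the bound you want. The paper explicitly acknowledges this obstruction: it notes that if all pairs $p,q\in\overline\D$ were ``sufficiently connected'' then \cite[Theorem 8.2]{lyt18} would apply verbatim, but this is not known. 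The actual proof instead isolates the set $A$ of interior points together with boundary points lying in arcs of finite $\widehat P_u$-length, shows these are pairwise sufficiently connected, observes that $\partial U$ has a dense set of points with preimage in $A$, and then adapts the discretization argument of \cite[Theorem 8.2]{lyt18} so that the chosen sample points all lie in $A$. The key input is the inequality $\operatorname{Area}(\widehat P_u|_V)\le C\,\ell(\widehat P_u|_{\partial V})^2$ for Jordan domains $V\subset\overline\D$, which comes from \cite[Proposition 5.1]{lyt18} together with the area formula, rather than from directly filling $\widehat u\circ\hat\Gamma$. Your proposal would need to be reworked along these lines to close the gap.
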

Note that the existence of such $u$ is guaranteed if $Z$ satisfies a quadratic isoperimetric inequality and $\partial Z$ satisfies a chord-arc condition, see~\cite{lyt17,lyt17''}.
\begin{proof}
Let $\widehat{Z}_u=(\overline{\D},d_u)$ be the space constructed as in Section~\ref{sec:pullbackhomeo} and $u=\widehat{u}\circ \widehat{P}_u$ be the associated factorization as discussed therein. Then as in the previous subsection we see that $\apmd u=\apmd \widehat{P}_u$ holds almost everywhere and hence in particular $\ell(u\circ \gamma)=\ell(\widehat{P}_u\circ \gamma)$ for $2$-almost every curve $\gamma$ in~$\overline{\D}$. As in the previous subsection this implies that $u$ and $\widehat{P}_u$ are infinitesimally quasiconformal.
	
Next we show that $\widehat Z_u$ is a metric disc, and that $\widehat P_u\in\Lambda(\partial \widehat Z_u,\widehat Z_u)$ is a uniform limit of homeomorphisms. By \cite[Corollary 7.12]{lyt18}, to see this it suffices to prove that $\widehat{P}_u$ is cell-like and its restriction $S^1\rightarrow \widehat{P}_u(S^1)$ is monotone. Since by Lemma~\ref{thm:pullbacksobtolip} $\widehat{P}_u$ is monotone, the proof that $\widehat{P}_u$ is cell-like follows by the same argument as in the proof of \cite[Theorem 8.1]{lyt18}. Thus it suffices to show that $\widehat P_u|_{S^1}:S^1\to \widehat P_u(S^1)$ is monotone, i.e that $\widehat{P}_u^{-1}(y) \cap S^1$ is connected for every $y\in \widehat{Z}_u$.
	
	We identify $\overline{\D}$ with the lower hemisphere of $S^2$. Since $\widehat{P}_u$ is cell-like, the set $K:=\widehat{P}_u^{-1}(y)$ is a cell-like subset of $S^2$ and in particular $K$ and $S^2\setminus K$ are connected. Assume $K\cap S^1$ is not connected. Then there exist $z,w \in S^1$ such that neither of the of the arcs connecting $z$ and $w$ is contained in $K$. However $u(z)=u(w)\in \partial Z$. Then since $u\in \Lambda(\partial Z,Z)$, one of the arcs $A \subset S^1$ connecting $z$ and $w$ gets mapped constantly to~$u(w)$. Set $M:=K\cup A$. Clearly $M$ is connected. We claim that $S^2\setminus M$ is also connected. Assume it was not, and let $O$ be a connected component of $S^2 \setminus M$ contained in $\overline{\D}$. Then as in the proof of \cite[Theorem 8.1]{lyt18}, the restriction of $u$ to $\partial O$ is identically constant $u(w)$, and hence $u|_{O}$ is identically constant $u(w)$. By the definition of $\widehat P_u$ it follows that $\widehat{P}_u$ is constant on $O$. As $\partial O \cap K \neq\emptyset$ we obtain $O\subset K$, which is a contradiction.\par 
	Thus $M$ is a cell-like subset of $S^2$. By Moore's quotient theorem, see e.g. \cite[Theorem~7.11]{lyt18}, $S^2/M$ is homeomorphic to $S^2$. Since $S^2/M$ is obtained from $S^2/K \cong S^2$ by quotienting out a closed curve that only self-intersects at one point, the arising space would have a topological cutpoint which is not the case for $S^2$. This contradiction shows that the restriction of $\widehat{P}_u$ to $S^1$ must be monotone and hence $\widehat{Z}_u$ is a metric disc, and $\widehat{P}_u\in \Lambda(\partial \widehat{Z}_u,\widehat{Z}_u)$ a uniform limit of homeomorphisms.\
	
	If $\widehat{P}_u$ is not an energy minimizer, then there exists $v\in \Lambda(\partial \widehat{Z}_u,\widehat{Z}_u)$ such that $E^2_+(v)<E^2_+(\widehat{P}_u)$. This implies that $\widehat{u}\circ v\in \Lambda(\partial Z,Z)$, and that
	\begin{equation}
	E^2_+(\widehat{u}\circ v)\leq E^2_+(v)<E^2_+(\widehat{P}_u)=E^2_+(u).
	\end{equation} 
	This contradiction shows that $\widehat{P}_u$ is an energy minimizer.
	
Set $\nu:=\mathcal{H}^2_Z$. By Theorem~\ref{thm:pullbacksobtolip1} we know that $\widehat{Z}_u$ has the Sobolev-to-Lipschitz property with respect to $\nu_u$. Recall that $u$ is infinitesimally quasiconformal and that, by Theorem~\ref{thm:rectarea} and monotonicity of $u$, the equality $J(\apmd u)=J u$ holds almost everywhere. We want however the Sobolev-to-Lipschitz property to hold with respect to $\mathcal{H}^2_{\widehat{Z}_u}$. To this end we show that $\nu_u=\mathcal{H}^2_{\widehat{Z}_u}$.
	
Since $u$ and $\widehat P_u$ are monotone and surjective, it follows that the map $\widehat u:\widehat Z_u\to Z$ is monotone. This and \eqref{eq:nuu} imply that $\#\widehat u\inv(z)=1$ for $\Ha^2_Z$-almost every $z\in Z$. Using the equality of the approximate metric differentials, we have
	\begin{align*}
	\nu_u(A)&=\int_{Z}\#(\widehat u\inv(z)\cap A)\ud\Ha^2_Z(z)=\Ha^2_Z(\widehat u(A))\\
	&=\int_{u\inv(\widehat u(A))}J(\apmd_x \widehat{P}_u)\ud x=\int\#(\widehat P_u\inv(y)\cap u\inv(\widehat u(A)))\ud \Ha^2_{\widehat{Z}_u}(y)\\
	&\ge \Ha^2_{\widehat{Z}_u}(A)
	\end{align*}
	for any Borel set $A\subset \widehat Z_u$. On the other hand
	\begin{equation}
		\nu_u(A)=\Ha^2_Z(\widehat u(A))\le \Ha^2_{\widehat{Z}_u}(A).
	\end{equation}
	These two inequalities imply that $\nu_u=\Ha^2_{\widehat{Z}_u}$. In particular we have shown that $\widehat{Z}_u$ has the Sobolev-to-Lipschitz property with respect to $\mathcal{H}^2_{\widehat{Z}_u}$.\par 
	The proof of the maximality statement (4) is identical to the proof of maximality in Theorem~\ref{thm:discmain}.\par 
	Finally assume that $Z$ satisfies a $(C,l_0)$-quadratic isoperimetric inequality. Note first that, by \cite[Proposition 5.1]{lyt18} and Theorem~\ref{thm:rectarea}, for every Jordan domain $V\subset \overline{\D}$ with $\ell(u|_{\partial V})<l_0$, one has
\begin{equation}
\textnormal{Area}\left(\widehat{P}_{u|V}\right)=\textnormal{Area}\left(u|_{V}\right)\leq C\cdot \ell(u|_{\partial V})^2\leq C\cdot \ell\left(\widehat{P}_{u|\partial V}\right)^2.
\end{equation}
The remaining proof is a variation of the proof of \cite[Theorem 8.2]{lyt18}. We say that points $p,q\in \overline{\D}$ are sufficiently connected if for every $\epsilon>0$ and there exists a curve $\eta$ joining $p$ to $q$ such that 
\begin{equation}\label{eq:1}
\ell(\widehat{P}_u\circ \eta)<\widehat{d}_u(p,q)+\epsilon.
\end{equation}
If we could show that all points $p,q \in \overline{\D}$ are sufficiently connected then the proof of \cite[Theorem 8.2]{lyt18} would go through without changes. Unfortunately we do not know whether this is true.\par 
Let $A$ be the set of those $p\in \overline{\D}$ such that $p$ either lies in the interior of $\overline{\D}$ or $p$ lies in an open interval $I\subset S^1$ satisfying  $\ell(\widehat{P}_u \circ I)<\infty$. We claim that if $p,q\in A$ then $p$ and $q$ are sufficiently connected.

So let $p,q\in A$. The proof of \cite[Corollary 5.4]{lyt18} shows that for every $\epsilon >0$ there exists $\delta >0$ such that, for every $y\in B_\delta(p)$ and $z\in B_\delta(q)$, there are curves $\gamma_p,\gamma_q$ connecting $y$ to $p$ and $z$ to $q$, respectively, such that \begin{equation}\label{eq:3}
\ell(\widehat{P}_u\circ \gamma_p)<\epsilon,\quad \ell(\widehat{P}_u\circ \gamma_q)<\epsilon.
\end{equation}
By construction of the metric $\widehat{d}_u$, there exists a curve $\gamma$ connecting $B_\delta(p)$ to $B_\delta(q)$ such that \[\ell(\widehat{P}_u\circ \gamma)<\widehat{d}_u(p,q)+\epsilon.\]
The concatenation of $\gamma$ with correspondingly chosen $\gamma_p$ and $\gamma_q$ yields a curve $\eta$ connecting $p$ to $q$ which satisfies \[\ell(\widehat{P}_u\circ eta)<\widehat{d}_u(p,q)+3\epsilon.\]
Since $\epsilon>0$ was arbitrary the claim follows.\par 
Now let $U\subset \widehat{Z}_u$ a Jordan domain satisfying $\ell(\partial U)<l_0$. Then the set of points in~$\partial U$ which have a preimage in~$A$ is dense in~$\partial U$. In particular we may choose the points $t_i$ and $x_i$ in the proof of \cite[Theorem 8.2]{lyt18} such that all the points $x_i$ lie in $A$ and are hence pairwise sufficiently connected. Under this modification the argument in the proof of \cite[Theorem 8.2]{lyt18} shows that
\begin{equation}
\mathcal{H}^2(U)\leq C\cdot \ell(\partial U)^2
\end{equation}
which completes the proof.
\end{proof}
\subsection{Essential minimal surfaces}
\label{sec:plat}
\begin{proof}[Proof of Theorem~\ref{cor:plateau}]

Let $X$ be a proper metric space which satisfies a $(C,l_0)$-quadratic isoperimetric inequality, and  $u\in\Lambda(\Gamma,X)$ a minimal disc spanning a Jordan curve $\Gamma$ which satisfies a chord-arc condition. By~\cite[Theorem 3.1]{lyt16} one has that $u\in N^{1,p}(\overline{\D};X)$ for some $p>2$. Let $\widehat{Z}_u$ be the metric space discussed in Section~\ref{sec:pullbackhomeo} and $u=\widehat{u}\circ \widehat{P}_u$ be the corresponding factorization.
Let $Z_u$ be the metric disc discussed in Section~\ref{sec:app} and $u=\bar{u}\circ P_u$ the corresponding factorization. Then, by \cite{lyt18} and the proof of \cite[Theorem 2.7]{cre19}, the pair $(Z_u,P_u)$ satisfies the assumptions of Theorem~\ref{maxdiscgen}. On the other hand, since $\ell(P_u \circ \gamma)=\ell(u\circ \gamma)$ for every curve~$\gamma$ in~$\overline{\D}$, it follows that $\widehat{Z}_u=\widehat{Z}_{\widehat{P}_u}$ and $\widehat{P}_u=\widehat{P}_{P_u}$. Theorem~\ref{maxdiscgen} now implies the claim.
\end{proof}
\bigskip\noindent The following example from \cite[Example 5.9]{lyt17}, demonstrates that a unique characterization statement in Theorem \ref{cor:plateau} in terms of the Sobolev-to-Lipschitz property does not hold.

\begin{example}\label{ex:segcollapsed}
	Let $Z$ be the metric space obtained from the standard Euclidean disc by collapsing a segment $I$ in its interior to a point. Then $Z$ is a geodesic metric disc satisfying a quadratic isoperimetric inequality with constant~$\frac{1}{2\pi}$, compare the proof of \cite[Theorem 3.2]{cre19}. The canonical quotient map $u:\overline{\D} \rightarrow Z$ is an energy minimizer in $\Lambda(\partial Z,Z)$ and satisfies $\apmd u=\apmd \id_{\overline{\D}}$ almost everywhere. In particular $\widehat{Z}_u$ is isometric to $\overline{\D}$. However $Z_u$ is isometric to $Z$, see~\cite[Theorem 1.2]{cre19}. It is also straightforward to see that $Z_u$ has the Sobolev-to-Lipschitz property.
\end{example}
To see that $Z$ is thick geodesic let $E\subset Z$ be a measurable subset of positive measure and $C>1$. Furthermore let $p\in \overline{\D}$ be such that $u(p)\neq u(I)$ is a density point of $E$ and $q\in I$ the point which is closest to $p$. Then for $\delta>0$ sufficiently small
\begin{equation}
\label{ineqex}
0<\Mod_2\Gamma(u^{-1}(E)\cap B(p,\delta),I\cap B(q,\delta);1)\le \Mod_2\Gamma(E,u(I);C),
\end{equation}
see \cite[Remark 3.4]{dur12} for the first inequality.  Now having equation~\eqref{ineqex} it is not hard to deduce that $Z$ is thick geodesic. The example also shows that being thick quasiconvex with constant~$1$ is a strictly stronger condition than being thick geodesic.\par
The metric disc $Z$ is not Ahlfors regular, since $\Ha^2(B_Z(p,r))$ grows linearly in $r$, and thus Example \ref{ex:segcollapsed} does not contradict Theorem \ref{thm:discmain}.
Note that, for $Z$ in the example, the construction in \cite{lyt18} yields the original space $Z$, while the space $\widehat Z$ constructed in the proof of Theorem \ref{maxdiscgen} coincides with $\overline\D$. This need not always be the case when collapsing a cell-like subset in the interior of $\overline{\D}$; the Euclidean disc with a small ball (in the  interior) collapsed is a metric disc satisfying the assumptions of Theorem \ref{maxdiscgen}, where both constructions yield the original space; cf. \cite[Example 11.3]{lyt18}.

\bibliographystyle{plain}
\bibliography{metric_discs_arxiv_v2}
\end{document}